\newcommand{\Z}{{\mathbb Z}}
\newcommand{\Q}{{\mathbb Q}}
\newcommand{\PP}{{\mathbb P}}
\newcommand{\Tr}{\operatorname{Tr}}
\newcommand{\GL}{\operatorname{GL}}
\newcommand{\Spec}{\operatorname{Spec}}
\newcommand{\isom}{\cong}
\newcommand{\ra}{\longrightarrow}
\newcommand{\bu}{\bullet}
\newcommand{\leqs}{\leqslant}
\newcommand{\geqs}{\geqslant}
\providecommand{\mc}[1]{\mathcal{ #1} }
\newtheorem{theorem}{Theorem}[section]
\newtheorem{proposition}[theorem]{Proposition}
\newtheorem{lemma}[theorem]{Lemma}
\theoremstyle{definition}
\newtheorem{definition}[theorem]{Definition}
\newtheorem{remark}[theorem]{Remark}
\newtheorem{example}[theorem]{Example}
\numberwithin{table}{section}
\begin{document}
\date{16th September 2021}
\title[Computing structure constants]{Computing structure constants
for rings \\ of finite rank from minimal free resolutions}

\author{Tom Fisher}
\address{University of Cambridge,
          DPMMS, Centre for Mathematical Sciences,
          Wilberforce Road, Cambridge CB3 0WB, UK}
\email{T.A.Fisher@dpmms.cam.ac.uk}

\author{Lazar~Radi\v{c}evi\'{c}}
\address{University of Cambridge,
          DPMMS, Centre for Mathematical Sciences,
          Wilberforce Road, Cambridge CB3 0WB, UK}
\email{lazaradicevic@gmail.com}

\begin{abstract}
  We show how the minimal free resolution of a set of $n$ points in
  general position in projective space of dimension $n-2$ explicitly
  determines structure constants for a ring of rank $n$. This
  generalises previously known constructions of Levi-Delone-Faddeev
  and Bhargava in the cases $n=3,4,5$.
\end{abstract}

\maketitle

\section{Introduction}

A classical construction, known as the Levi-Delone-Faddeev
correspondence \cite{levi,delone1964theory} (see also \cite{ggs}),
shows that a binary cubic form
\[ f(x,y) = a x^3 + b x^2 y + c x y^2 + d y^3 \] naturally determines
a ring of rank $3$.  Explicitly, if $\xi$ is a symbol formally
satisfying $f(\xi,1)=0$ then $\omega = a \xi$ and
$\theta = -d \xi^{-1}$ satisfy the relations
\begin{equation}
\label{cubic_ring}
\begin{aligned}
\omega^2 &= -ac - b \omega + a \theta, \\
\omega \theta &= -ad, \\
\theta^2 &= -bd - d \omega + c \theta.
\end{aligned}
\end{equation}
These relations may be used to define a commutative and associative
multiplication on the free module with basis $1,\omega,\theta$. The
construction works over any base ring, and gives a discriminant
preserving bijection between equivalence classes of binary cubic forms
and isomorphism classes of rings of rank $3$. This construction was
extended to rings of rank $4$ and $5$ by Bhargava
\cite{bhargava2004higher, bhargava2008higher}, who considered
pairs of quadratic forms in $3$ variables, and $5 \times 5$
alternating matrices of linear forms in $4$ variables.

We describe an extension to rings of rank $n$ for any integer
$n \geqs 3$. Our main result (Theorem~\ref{algebra constructed from n
  points}) shows how a set of $n$ points $X \subset \PP^{n-2}$ in
general position determines, by means of an explicit construction
involving the minimal free resolution of $X$, structure constants for
an algebra $A$ of rank $n$. In particular we show that this algebra
$A$ is isomorphic to the coordinate ring of $X$.

We should say straight away that we are not expecting to fully
generalise Bhargava's work, and count number fields of degree $n >
5$. Instead our motivation comes from the study of genus one curves.
If $C \subset \PP^{n-1}$ is a genus one curve of degree $n$, embedded
by a complete linear system, then a generic hyperplane section of $C$
will be a set of $n$ points in general position.  The first author
showed in \cite{fisher2018invforalln} how to associate to such a curve
$C$ a matrix of quadratic forms $\Omega$ describing the invariant
differential.  One application of Theorem~\ref{algebra constructed
  from n points} is that the associative law then determines some of
the equations defining the space of all such $\Omega$'s. Another
application is given by the second author in his PhD thesis
\cite{LazarThesis} where for $E/\Q$ an elliptic curve and $n \geqs 2$
an integer, he gives a simple bound on the least discriminant of a
degree $n$ number field over which each element of order $n$ in the
Tate Shafarevich group of $E$ capitulates.

Having mentioned these applications to the study of curves, in the
rest of this article we only consider finite sets of points in
projective space.

For the statement of Theorem~\ref{algebra constructed from n points}
we work over a field $K$ of characteristic $0$. However, examination
of the proofs shows that all we need is that the characteristic does
not divide $2n$. In Section~\ref{sec:orders} (with the main proof
postponed to Section~\ref{sec:newproof}) we describe a slightly more
complicated variant of our construction that works in all
characteristics. It is this construction that reduces (in the cases
$n=3,4,5$) to the earlier work of Levi-Delone-Faddeev and Bhargava. It
also gives better bounds in \cite[Theorem 1.0.1]{LazarThesis}.

In Section~\ref{sec:bijection} we review the connection between
non-degenerate algebras of dimension $n$ and sets of $n$ points in
$\PP^{n-2}$ in general position.  Then, as explained in
Section~\ref{sec:overview}, the proof of Theorem~\ref{algebra
  constructed from n points} comes down to (i) checking our
construction of the structure constants behaves well under all changes
of co-ordinates, and (ii) checking that the theorem holds for the
standard set of $n$ points:
\[(1:0:\ldots:0), \,\, (0:1:0:\ldots:0), \,\, \ldots \,\, , (0:
  \ldots :0:1), \,\, (1:1: \ldots :1).\] We give the proof of (i)
in Sections~\ref{sec:sym} and~\ref{sec:change}. We may check (ii) for
any given $n$ by computer algebra. We give a proof that works for all
$n$ in Sections~\ref{structure constants section} and~\ref{wilson
  description section}, using an explicit description of the minimal
free resolution due to Wilson~\cite{wilson}.

\subsection*{Acknowledgements}
This article is based on part of the second author's PhD thesis.  We
thank Manjul Bhargava and Melanie Wood for useful conversations,
and Jack Thorne for alerting us to an oversight in
an earlier version of Section~\ref{sec:bijection}.

\section{Statement of the main theorem}
\label{sec:state}

We recall a few basic notions from commutative algebra that will be
needed to state our main theorem. Throughout, we work over a field $K$
with algebraic closure $\bar{K}$.  Let $R=K[x_1,\ldots,x_m]$ be the
polynomial ring with its usual grading. For $M=\oplus_d M_d$ a graded
$R$-module, we write $M(c)=\oplus_d M_{c+d}$ for the graded $R$-module
with grading shifted by $c$. A direct sum of modules of the form
$R(c)$ is a called a {\em graded free $R$-module}.
 
\begin{definition}\label{def minimal free res}
  A {\em graded free resolution} of a graded $R$-module $M$ is a chain
  complex $F_{\bullet}$ of graded free $R$-modules
  \[
    F_r \xrightarrow{\,\,\,\phi_{r}\,\,\,} F_{r-1}
    \xrightarrow{\phi_{r-1}} \ldots \xrightarrow{\,\,\, \phi_2 \,\,\,}
    F_1 \xrightarrow{\,\,\,\phi_1\,\,\,} F_0,
  \]
  that is exact in degree $>0$, and has
  $H_0(F_{\bullet})=F_0/\phi_1(F_1) \cong M$.  Let
  $\mathfrak{m}=(x_1,\ldots,x_{m})$ be the maximal homogeneous ideal
  of $R$. We say a resolution $F_{\bullet}$ is {\em minimal} if we
  have $\phi_{k}(F_k) \subset \mathfrak{m} F_{k-1}$ for every
  $k \geqs 1$.
\end{definition}

Our interest is in the case $M\cong R/I$, where $I$ is a homogeneous
ideal in $R$.

\begin{remark}
  The minimal free resolution of a module is unique up to an
  isomorphism of chain complexes. Any such isomorphism consists of
  changes of bases for the free $R$-modules $F_k$ in the resolution,
  see \cite[Theorem~20.2]{eisenbud}.
\end{remark}

We require that the maps $\phi_k$ respect the grading of the
modules. For example, a homomorphism of modules $R(-m) \to R$ is
defined by multiplication by an element $f \in R$, and this is a
graded homomorphism if and only if $f$ is homogeneous of degree $m$
(or zero).  By choosing bases for each module $F_k$ in the resolution,
we may represent the maps $\phi_k$ as matrices of homogeneous
polynomials. By abuse of notation we also write $\phi_k$ for these
matrices.

\begin{remark}
  The condition that the resolution is minimal means that every
  non-zero entry of every matrix has positive degree. By Nakayama's
  lemma, this is equivalent to requiring that $\phi_k$ takes the basis
  of $F_k$ to a minimal set of generators for the kernel of
  $\phi_{k-1}$, see \cite[Corollary~1.5]{eisenbud2005geometry}. This
  characterisation makes it clear that every finitely generated graded
  module admits a minimal free resolution.
\end{remark}

A minimal free resolution of an ideal contains the data of a set of
generators for the ideal, the data of all relations (syzygies) that
these generators satisfy, the data of relations that these relations
satisfy, and so on iteratively. We illustrate this in the following
example.
    
\begin{example}
\label{ex:res:n=4}
Let $X$ be the set of four points $(1:0:0)$, $(0:1:0)$, $(0:0:1)$,
$(1:1:1)$ in $\PP^{2}$. The homogeneous ideal $I$ of $X$ in
$R=K[x_1,x_2,x_3]$ is generated by the quadratic forms
$A=x_1(x_2-x_3)$ and $B=x_2(x_1-x_3)$.  For the first step of the
resolution, we can take $F_0=R$, $F_1=R(-2)^2$, and let
$\phi_1 : R(-2)^2 \xrightarrow{} R$ be the map represented by the row
matrix $(A,B)$, so that $\mathrm{coker}(\phi_1) \cong R/I$.  To
compute the second step, we observe that $A$ and $B$ satisfy the
relation $B \cdot A + (-A) \cdot B=0$. Furthermore, any equation
$f \cdot A+g \cdot B=0$ is obtained by multiplying this relation by
some $r \in R$, i.e., we have $f=r \cdot B$ and $g=-r \cdot A$. We now
take $F_2=R(-4)$, and let $\phi_2 :R(-4) \xrightarrow{} R(-2)^2$ be
the map represented by the column matrix $(B,-A)^T$. Since this map is
injective, this is where the resolution stops.  We obtain the chain
complex
\begin{equation}
\label{res:n=4}
0 \ra R(-4) \xrightarrow{(B,-A)^T} R(-2)^2 \xrightarrow{(A,B)} R \ra 0
\end{equation}
which is exact at the middle term and on the left, and is hence the
minimal free resolution of $R/I$.
\end{example}

More generally we consider sets of points of the following form.

\begin{definition}
  A zero dimensional variety $X \subset \PP^{n-2}$ defined over $K$ is
  a {\em set of $n$ points in general position}, if $X$ has degree $n$
  and the set of geometric points $X(\bar{K})$ consists of $n$ points
  in general position, meaning that no subset of $X(\bar{K})$ of size
  $n-1$ is contained in a hyperplane.
\end{definition}

\begin{theorem}
\label{min res theorem}
Let $n \geqs 4$, and let $R=K[x_1,\ldots,x_{n-1}]$ be the coordinate
ring of $\PP^{n-2}$.  Let $X \subset \PP^{n-2}$ be a set of $n$ points
in general position. Let $I \subset R$ be the homogeneous ideal of
$X$.  Then $I$ is (arithmetically) Gorenstein, and the minimal free
resolution $F_{\bullet}$ of $R/I$ takes the form 
\begin{equation*}
\begin{split}
  0 \ra R(-n) \xrightarrow{\phi_{n-2}} R(-n+2)^{b_{n-3}} &
  \xrightarrow{\phi_{n-3}} R(-n+3)^{b_{n-4}} \xrightarrow{\phi_{n-4}}
  \ldots\\& \ldots \xrightarrow{\,\,\,\phi_{3}\,\,\,} R(-3)^{b_{2}}
  \xrightarrow{\,\,\,\phi_{2}\,\,\,} R(-2)^{b_{1}}
  \xrightarrow{\,\,\,\phi_{1}\,\,\,} R \ra 0,
\end{split}
\end{equation*}
where the Betti numbers are given by $b_i=n\binom{n-2}{i} - \binom{n}{i+1}$.
\end{theorem}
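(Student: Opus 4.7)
The plan is to deduce the Gorenstein property from general position via the classical Cayley--Bacharach characterisation, and then to read off the shape and Betti numbers of the minimal free resolution from the Hilbert series combined with Gorenstein self-duality.

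First, I would compute the Hilbert function. The $n$ points span $\PP^{n-2}$, and by general position so do any $n-1$ of them, giving $h_{R/I}(0)=1$ and $h_{R/I}(1)=n-1$. For $d=2$, after changing coordinates so that $n-1$ of the points become the standard basis $e_1,\dots,e_{n-1}$, general position forces the remaining point $p=(a_1:\dots:a_{n-1})$ to have all $a_i\neq 0$, and then $x_1 x_2$ is a quadric vanishing on the basis points but not at $p$. The analogous argument at each point of $X$ gives $h_{R/I}(2)=n$, and hence $h_{R/I}(d)=n$ for all $d\geq 2$. The Hilbert series is therefore
\[
H_{R/I}(t)=\frac{1+(n-2)t+t^2}{1-t},
\]
with $h$-vector $(1,n-2,1)$. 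Since $R/I$ is automatically Cohen--Macaulay of dimension $1$, the resolution has length $n-2$ and $\operatorname{reg}(R/I)=2$.

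For the Gorenstein property I would invoke the classical theorem of Davis--Geramita--Orecchia: a $0$-dimensional Cohen--Macaulay $X\subset\PP^{n-2}$ with socle degree $s$ is arithmetically Gorenstein if and only if $h_X(d)=h_{X\setminus\{p\}}(d)$ for every $p\in X$ and every $d<s$. Here $s=2$, so the only nontrivial case is $d=1$, which says that $X\setminus\{p\}$ spans $\PP^{n-2}$ --- exactly the general position hypothesis.

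Finally, the shape and Betti numbers come from self-duality. Gorenstein plus Cohen--Macaulay gives an isomorphism $F_i\cong\operatorname{Hom}_R(F_{n-2-i},R(-n))$, and matching with $F_0=R$ at one end forces $F_{n-2}=R(-n)$. The regularity bound restricts each $F_i$ (for $i\geq 1$) to be a sum of copies of $R(-(i+1))$ and $R(-(i+2))$, and comparing the admissible shifts on both sides of the self-duality isomorphism eliminates the $R(-(i+2))$ summand for $1\leq i\leq n-3$, yielding the stated pure-shift shape. Extracting coefficients from
\[
(1-t)^{n-2}\bigl(1+(n-2)t+t^2\bigr)=1 - b_1 t^2 + b_2 t^3 - \cdots + (-1)^{n-2}t^n
\]
gives $b_i=(n-2)\binom{n-2}{i}-\binom{n-2}{i+1}-\binom{n-2}{i-1}$, which simplifies to $b_i=n\binom{n-2}{i}-\binom{n}{i+1}$ via $\binom{n}{i+1}=\binom{n-2}{i+1}+2\binom{n-2}{i}+\binom{n-2}{i-1}$. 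The main obstacle is the Gorenstein step, but via the Cayley--Bacharach criterion it reduces cleanly to the general position hypothesis.
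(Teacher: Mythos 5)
Your argument is correct, and it takes a genuinely different route from the paper, which offers no proof at all for this theorem: it simply cites Wilson's thesis (\cite{wilson}, Theorem~138 for the resolution and Corollary~140 for the Gorenstein property). Your route --- compute the Hilbert function to get the $h$-vector $(1,n-2,1)$, deduce the Gorenstein property from the Davis--Geramita--Orecchia / Cayley--Bacharach criterion (where the only nontrivial condition is that $X\setminus\{p\}$ spans $\PP^{n-2}$, i.e.\ exactly general position), and then pin down the graded shape by playing the $2$-regularity of $R/I$ against the self-duality $F_i\cong\operatorname{Hom}_R(F_{n-2-i},R(-n))$ --- is the standard proof and has the advantage of making the statement self-contained; the Betti number computation from the Hilbert series checks out (e.g.\ $b_1=2$ for $n=4$, matching the paper's Example with $R(-2)^2$). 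Two small cautions. First, the DGO criterion as you quote it omits the hypothesis that the $h$-vector be symmetric: the Cayley--Bacharach property alone does not imply Gorenstein (four general points in $\PP^3$ have $h$-vector $(1,3)$ and satisfy the degree-$0$ condition vacuously, but are not arithmetically Gorenstein). Since you have already computed the symmetric $h$-vector $(1,n-2,1)$ this costs nothing here, but the criterion should be quoted in full. Second, your coordinate changes and the separating quadrics for $h(2)=n$ live over $\bar K$ (the individual points of $X$ need not be $K$-rational), so you should add the one-line remark that the Hilbert function, the graded Betti numbers, and the Gorenstein property are all unchanged under base field extension, so the computation over $\bar K$ suffices.
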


\begin{proof}
  The minimal free resolution is as described in
  \cite[Theorem~138]{wilson}, and the references cited there.  For the
  statement that $I$ is Gorenstein see \cite[Corollary~140]{wilson}.
\end{proof}	
 
We note that $\phi_{1}$ and $\phi_{n-2}$ are represented by matrices
of quadratic forms, while the maps $\phi_i$, for $1<i<n-2$, are
represented by matrices of linear forms.
 
\begin{definition}
\label{def:brackets}
The resolution $F_{\bullet}$ determines the following quadratic forms
in $x_1, \ldots,x_{n-1}$.
\begin{enumerate}
\item For $1\leqs a_1,a_2,\ldots,a_{n-2} \leqs n-1$ we define
  \begin{equation*}
    [a_1,a_2,\ldots,a_{n-2}]_{F_{\bu}}=\frac{\partial \phi_{1}}{\partial x_{a_1}} \frac{\partial \phi_{2}}{\partial x_{a_2}} \cdot \cdot \cdot \frac{\partial \phi_{n-2}}{\partial x_{a_{n-2}}},
  \end{equation*}
  where the partial derivative of a matrix is the matrix of partial
  derivatives of its entries, and the product is matrix
  multiplication.
\item Let $\sigma$ be the $(n-2)$-cycle $(12\ldots n-2)$ in the
  symmetric group $S_{n-2}$. We define
  \[ [[a_1,a_2,\ldots,a_{n-2}]]_{F_{\bu} }=\sum_{k=1}^{n-2}
    [a_{\sigma^{2k}(1)},a_{\sigma^{2k}(2)},
    \ldots,a_{\sigma^{2k}(n-2)}]_{F_{\bu}}. \]
\item For $1 \leqs j \leqs n-1$ we define
  $\Omega_j = (-1)^j [[ 1,2, \ldots, \widehat{j}, \ldots,n-1
  ]]_{F_{\bu}}$.
\end{enumerate}
The choice of resolution will usually be fixed, and we therefore drop
the subscripts $F_{\bu}$.
\end{definition}

For our main result we work over a field $K$ of characteristic $0$.

\begin{theorem} \label{algebra constructed from n points} Let
  $X \subset \PP^{n-2}$ be a set of $n$ points in general position,
  and let $\Omega_1,\ldots,\Omega_{n-1}$ be the quadratic forms
  associated to a minimal free resolution of $X$. Then there exists a
  commutative and associative $K$-algebra $A$, of dimension $n$, and a
  $K$-basis $1=\alpha_0,\alpha_1,\ldots,\alpha_{n-1}$ for $A$, such
  that for each $1\leqs i,j \leqs n-1$ we have
  \[
    \alpha_i \alpha_j =c^{0}_{ij}+\sum_{k=1}^{n-1} \frac{\partial^2
      \Omega_k}{\partial x_i \partial x_j}\alpha_k,
  \]
  for some constant $c^{0}_{ij} \in K$. Moreover $A$ is isomorphic to
  the affine coordinate ring (i.e., ring of global functions) of $X$,
  and the $\alpha_i$ for $1 \leqs i \leqs n-1$ span the trace zero subspace.
\end{theorem}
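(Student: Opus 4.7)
The plan is to follow the roadmap outlined at the end of Section~\ref{sec:state}'s introduction: reduce to a single universal configuration by showing that the entire construction is covariant under all admissible changes, and then verify the theorem on the standard set of $n$ points $\{e_1,\ldots,e_{n-1}, e_1+\cdots+e_{n-1}\} \subset \PP^{n-2}$.

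The first task is to analyse covariance. The construction of the $\Omega_j$ from $F_{\bu}$ depends on two things: the choice of coordinates $x_1,\ldots,x_{n-1}$ on $\PP^{n-2}$, and the choice of bases for each free module $F_k$ in the minimal resolution. By the uniqueness remark following Definition~\ref{def minimal free res}, any two minimal resolutions of $R/I$ differ by an isomorphism of chain complexes, i.e.\ by a sequence of basis changes on the $F_k$. I would therefore prove a transformation law saying: (a) under $g \in \GL_{n-1}$ acting on the $x_i$, the tuple $(\Omega_1,\ldots,\Omega_{n-1})$ transforms covariantly by a formula depending only on $g$ and $\det g$; and (b) the antisymmetrised brackets $[[a_1,\ldots,a_{n-2}]]$ are insensitive to a change of basis of the internal free modules $F_2,\ldots,F_{n-3}$, while the basis changes on $F_0, F_1, F_{n-2}, F_{n-1}$ contribute only an overall scalar and an affine change of the $\alpha_i$. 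The cyclic sum over the $(n-2)$-cycle $\sigma$ in Definition~\ref{def:brackets}(ii) is precisely engineered to make the inner basis dependence cancel; unpacking this, together with Gorenstein self-duality of $F_{\bu}$ (Theorem~\ref{min res theorem}), should give (b). Combined with (a), this shows that the algebra $A$ built from the formula in the theorem is determined, up to $K$-algebra isomorphism fixing the subalgebra $K \cdot 1$, by the projective equivalence class of $X$.

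The second task is to verify the theorem for the standard set $X_0$. Here the affine coordinate ring is $K^n$, with an obvious basis of orthogonal idempotents, and I would choose a basis $1,\alpha_1,\ldots,\alpha_{n-1}$ for $K^n$ adapted to the standard inclusion $X_0 \subset \PP^{n-2}$ (for instance, letting $\alpha_i$ be the idempotent at $e_i$ minus the average, so that $\sum_i \alpha_i = 0$, making them span the trace zero subspace automatically). I would then use Wilson's explicit combinatorial description of the minimal free resolution of $R/I(X_0)$ (as cited in Theorem~\ref{min res theorem}) to write down the matrices $\phi_1,\ldots,\phi_{n-2}$ explicitly, compute the products of partial derivatives in Definition~\ref{def:brackets}(i), form the cyclic sums $\Omega_j$, and check that the second partial derivatives $\partial^2\Omega_k/\partial x_i \partial x_j$ reproduce the known structure constants of $K^n$ in this basis. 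The constants $c^0_{ij}$ are then read off from $\alpha_i\alpha_j$ modulo the $\alpha_k$.

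The main obstacle I expect is the covariance argument for the internal basis changes of $F_2,\ldots,F_{n-3}$. The bracket $[a_1,\ldots,a_{n-2}]$ is a product of $n-2$ matrices of different shapes, and under a change of basis of the $k$th internal module it picks up a conjugation-like factor that is not obviously killed term-by-term; only after cyclic summation (and only because of Gorenstein duality pairing $F_k$ with $F_{n-1-k}$) should the dependence vanish. Getting this cancellation clean, and checking that no spurious dependence on the inner bases leaks into the $c^0_{ij}$, is the technical heart of the argument; once it is in hand, the explicit verification on $X_0$ via Wilson's resolution is a long but essentially mechanical computation.
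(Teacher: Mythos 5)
Your overall strategy --- prove covariance of the construction under all the relevant choices, then verify the identity for the standard configuration using Wilson's explicit resolution --- is exactly the paper's (Sections 5--9). But you have misdiagnosed where the work lies, and the misdiagnosis points you at the wrong lemma. Independence of the internal basis choices for $F_1,\ldots,F_{n-3}$ is not the technical heart and needs neither the cyclic summation nor Gorenstein duality: a graded basis change of $F_k$ is a \emph{constant} invertible matrix $M$ (each $F_k$ is a sum of copies of a single twist $R(-c)$), so it replaces $\phi_k$ by $\phi_k M$ and $\phi_{k+1}$ by $M^{-1}\phi_{k+1}$, and since $M$ is constant it commutes with the partial derivatives and cancels term-by-term already in the single bracket $[a_1,\ldots,a_{n-2}]$. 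Only the rank-one end modules $F_0$ and $F_{n-2}$ contribute, giving the overall scalar of Remark~\ref{defined up to scalar}. The cyclic sum defining $[[\cdots]]$ exists for a different reason: to make the symbol alternating in its arguments (Lemma~\ref{double bracket permute}), which is what is actually needed to prove covariance of $\Omega$ under the permutation and unipotent generators of $\GL_{n-1}$. That alternation property is the genuine technical content of the covariance step; it is extracted from differentiating the syzygy relations $\phi_r\phi_{r+1}=0$ together with the self-duality of the resolution, and without it your step (a) cannot be completed.

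A second point your outline leaves implicit: to transport the precise statement (a specific basis whose structure constants are the Hessians of the $\Omega_k$), rather than merely the isomorphism class of $A$, along a coordinate change, you must also track how the distinguished basis transforms. The paper does this by identifying $A=K\cdot 1\oplus V^*$ via the trace pairing, so that both the multiplication tensor of the coordinate ring and $\Omega$ lie in $V^*\otimes S^2V$ and transform by the same $\GL(V)$-action (Theorem~\ref{structure constants theorem} and Proposition~\ref{Omega quad change of coordinates}); the basis in the theorem is then the trace-dual of the embedding basis. With these corrections your plan coincides with the paper's proof, including the final verification on the standard points, which reduces to Lemma~\ref{single bracket points} and is proved by splicing Koszul complexes following Wilson.
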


\begin{remark}
\label{rem:c0}
Following \cite[page~68]{bhargava2008higher} we can use the
associative law to solve for the $c^{0}_{ij}$. Explicitly, for any
$1\leqs i,j,k \leqs n-1$ with $i \not= k$, comparing coefficients of
$\alpha_k$ in
$\alpha_i (\alpha_j \alpha_k) = (\alpha_i \alpha_j)\alpha_k$ gives
\[ c^{0}_{ij}=\sum_{r=1}^{n-1} \left( \frac{\partial^2
      \Omega_r}{\partial x_j \partial x_k} \frac{\partial^2
      \Omega_k}{\partial x_r \partial x_i}- \frac{\partial^2
      \Omega_r}{\partial x_i \partial x_j} \frac{\partial^2
      \Omega_k}{\partial x_r \partial x_k}\right). \]
\end{remark}

\begin{remark}\label{defined up to scalar}
  The construction of $[a_1,a_2,\ldots,a_{n-2}]$, and hence of
  $\Omega_1, \ldots, \Omega_{n-1}$, is independent of the choice of
  basis for the free $R$-modules in the resolution $F_\bullet$, except
  for the leftmost module $R(-n)$.  The quadratic forms
  $\Omega_1, \ldots, \Omega_{n-1}$ are therefore uniquely determined
  up to multiplying through by an overall scalar. It is clear that
  this gives an isomorphic $K$-algebra.
\end{remark}

\section{Constructing orders in number fields}
\label{sec:orders}

In this section we explain the connection between Theorem~\ref{algebra
  constructed from n points} and the previously known constructions
due to Levi-Delone-Faddeev and Bhargava for $n=3,4,5$. Whereas we work with
algebras over a field of characteristic zero, the latter constructions
work for rings of rank~$n$, i.e., algebras over $\Z$. We discuss to
what extent this earlier work generalises to larger~$n$.

Let $A$ be an $n$-dimensional commutative $K$-algebra with $K$-basis
$1, \alpha_1, \ldots, \alpha_{n-1}$.  The structure constants
$c_{ij}^{k}$ for $1 \leqs i,j,k \leqs n-1$ are determined by
\[ \alpha_i \alpha_j = c_{ij}^{0} + \sum_{k=1}^{n-1} c_{ij}^{k}
  \alpha_{k}. \] As noted in Remark~\ref{rem:c0}, the $c_{ij}^{0}$ may
be recovered from the other structure constants using the associative
law.  We say that bases $1, \alpha_1, \ldots, \alpha_{n-1}$ and
$1, \beta_1, \ldots, \beta_{n-1}$ differ by a {\em shear} if
$\beta_i = \alpha_i + \lambda_i \cdot 1$ for some
$\lambda_1, \ldots, \lambda_{n-1} \in K$.

When $n = 3$ the algebra constructed by Levi-Delone-Faddeev
(as defined by~\eqref{cubic_ring}
in the introduction) is uniquely determined, up to shear, by
\begin{equation}
    \label{strconsts:3}
    c_{11}^{2} = a, \qquad 
    c_{11}^{1} - 2 c_{12}^{2} = -b, \qquad 
    c_{22}^{2} - 2 c_{12}^{1} = c, \qquad
    c_{22}^{1} = -d. 
\end{equation}
To compare with the algebra in Theorem~\ref{algebra constructed from n
  points} we consider the minimal free resolution
\[ 0 \ra R(-3) \stackrel{f}{\ra} R \ra 0 \] where
$f(x_1,x_2) = a x_1^3 + b x_1^2 x_2 + c x_1 x_2^2 + d x_2^3$.
Using Definition~\ref{def:brackets}, we compute
\[ \Omega_1 = -[[2]] = -[2] = -\frac{\partial f}{\partial x_2} = -b
  x_1^2 - 2c x_1 x_2 - 3 d x_2^2 \] and
\[ \Omega_2 = [[1]] = [1] = \frac{\partial f}{\partial x_1} = 3a x_1^2
  + 2b x_1 x_2 + c x_2^2. \] From this it is easy to check that
\[c_{ij}^{k} = \frac{1}{6} \frac{ \partial^2 \Omega_k}{\partial x_i
    \partial x_j}\] is a solution to~\eqref{strconsts:3}.

When $n \geqs 4$ the algebra is uniquely determined, up to shear,
by the linear combinations of structure constants in the left hand
column of Table~\ref{table:strconst}, where $i,j,k$ range over all
triples of distinct integers with $1 \leqs i,j,k \leqs
n-1$. These linear combinations appear, with what we believe is a
type error, in \cite[Equation~(21)]{bhargava2008higher}.  The
remaining columns are explained below.
\begin{table}[ht]
\caption{Structure constants for rings of rank $n$ (up to shear)}
\centering
$\begin{array}{l|c|c|c}
& n=4 & n=5 & \text{$n \geqs 4$} \\ \hline
c^{k}_{ij} &\pm \{jjii\} & \pm \{ii \ell jj\} & 
  \pm \{i,i,1,2,\ldots \widehat{i},\ldots,
  \widehat{j},\ldots, \widehat{k},\ldots,n-1,j,j\} \\
c^{j}_{ii} &\pm \{iiik\} & \pm \{\ell iiik\} & \pm 
  \{i,i,1,\ldots,\widehat{i}, \ldots, \widehat{j},\ldots,n-1,i \} \\
c^{j}_{ij}-c^{k}_{ik} & \pm \{iijk\} & \pm \{jk \ell ii\} &  
  \pm \{i,i,1,\ldots,\widehat{i},\ldots,\widehat{j},\ldots,\widehat{k},
  \ldots,n-1,j,k\} \\
c^{i}_{ii}-c^{j}_{ij}-c^{k}_{ik} & \pm \{ikij\} &
 \pm \{ij \ell ki\} & \pm \{i,j,1,\ldots,\widehat{i},\ldots,
  \widehat{j},\ldots, \widehat{k},\ldots,n-1,k,i\}
\end{array}$
\label{table:strconst}
\end{table}

Let $F_{\bu}$ be a minimal free resolution of a set of $n$ points in
general position, with differentials $\phi_1, \ldots, \phi_{n-2}$
represented by matrices of linear and quadratic forms.  We write
\[ \phi_1 = \sum_{i \leqs j} P(i,j) x_i x_j \quad \text{ and }
  \quad \phi_{n-2} = \sum_{i \leqs j} Q(i,j) x_i x_j, \] where the
$P(i,j)$ are row vectors, and the $Q(i,j)$ are column vectors. We then
define
\[ \{a_1a_2\ldots a_n\}:=P(a_1,a_2) \frac{\partial \phi_{2}}{\partial
    x_{a_3}} \cdot \cdot \cdot \frac{\partial \phi_{n-3}}{\partial
    x_{a_{n-2}}} Q(a_{n-1},a_n). \]

When $n=4$ the minimal free resolution $F_\bullet$ takes the
form~\eqref{res:n=4} where $A$ and $B$ are ternary quadratic
forms. The symbols $\{ijk \ell\}$ were denoted $\lambda^{ij}_{k \ell}$
in \cite[Section~3.2]{bhargava2004higher}. The structure constants in
{\em loc. cit.}, up to shear, are then as recorded in
Table~\ref{table:strconst}, where $\pm$ denotes the sign of the
permutation taking $1,2,3$ to $i,j,k$.

When $n=5$ the structure theorem of Buchsbaum and Eisenbud
\cite{buchsbaum1982gorenstein} for Gorenstein ideals of codimension 3
shows that the minimal free resolution takes the form
\[ 0 \ra R(-5) \stackrel{P^T}{\ra} R(-3)^5 \stackrel{\Phi}{\ra}
  R(-2)^5 \stackrel{P}{\ra} R \ra 0, \] where $\Phi$ is a $5 \times 5$
alternating matrix of linear forms, and $P$ is the (signed) row vector
of $4 \times 4$ Pfaffians of $\Phi$.  Our symbols $\{ijk \ell m\}$
differ only by some factors of $2$ from those defined in
\cite[Section~4]{bhargava2008higher}.  The structure constants in {\em
  loc. cit.}, up to shear, are again as recorded in
Table~\ref{table:strconst}, where $\pm$ denotes the sign of the
permutation taking $1,2,3,4$ to $i,j,k, \ell$.

The expressions we give in the right hand column of
Table~\ref{table:strconst} are new.

\begin{theorem}
\label{thm:overZ}
Let $n \geqs 4$ be an integer. Then the structure constants
\begin{equation*}
  c_{ij}^{k} = \frac{1}{2n} 
  \frac{ \partial^2 \Omega_k}{\partial x_i \partial x_j}
\end{equation*}
satisfy the system of equations in Table~\ref{table:strconst}.
\end{theorem}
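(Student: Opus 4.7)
The plan is to unpack $\partial^2 \Omega_k/\partial x_i \partial x_j$ into an explicit sum of $\{\cdots\}$-symbols and then collapse this sum using identities on the symbols. The key observation is that in a single bracket $[a_1, \ldots, a_{n-2}] = \frac{\partial \phi_1}{\partial x_{a_1}} \frac{\partial \phi_2}{\partial x_{a_2}} \cdots \frac{\partial \phi_{n-2}}{\partial x_{a_{n-2}}}$, only the outer factors are non-constant---the middle $\phi_l$ are linear in the coordinates, so their partials are matrices of scalars. By the product rule,
\[
  \frac{\partial^2 [a_1, \ldots, a_{n-2}]}{\partial x_i \partial x_j} = \{a_1, j, a_2, \ldots, a_{n-2}, i\} + \{a_1, i, a_2, \ldots, a_{n-2}, j\},
\]
up to factors of $2$ from $\partial^2 \phi_1/\partial x_i^2 = 2 P(i,i)$ and $\partial^2 \phi_{n-2}/\partial x_i^2 = 2 Q(i,i)$ when indices coincide at the ends. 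Summing over the $n-2$ cyclic shifts defining $[[1, 2, \ldots, \widehat{k}, \ldots, n-1]]$ then expresses $\partial^2 \Omega_k/\partial x_i \partial x_j$ as an explicit linear combination of $2(n-2)$ such symbols.

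For each row of Table~\ref{table:strconst}, I would form the prescribed shear-invariant combination of $c^k_{ij}$'s and reduce the resulting sum using three families of identities on $\{b_1, \ldots, b_n\}$. First, symmetry of $P$ and $Q$ in their two arguments gives $\{b_1, b_2, \ldots\} = \{b_2, b_1, \ldots\}$ and $\{\ldots, b_{n-1}, b_n\} = \{\ldots, b_n, b_{n-1}\}$. Second, extracting the coefficient of $x_a x_b$ from the chain-complex equation $\phi_l \phi_{l+1} = 0$ for $2 \leq l \leq n-4$ gives $(\partial_{x_a}\phi_l)(\partial_{x_b}\phi_{l+1}) = -(\partial_{x_b}\phi_l)(\partial_{x_a}\phi_{l+1})$, hence sign-antisymmetry of the symbol under transpositions of adjacent middle positions $b_r \leftrightarrow b_{r+1}$ for $3 \leq r \leq n-3$. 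Third, extracting the coefficient of $x_a x_b x_c$ from $\phi_1 \phi_2 = 0$ gives the Koszul-type identity $P(a,b)\frac{\partial \phi_2}{\partial x_c} + P(a,c)\frac{\partial \phi_2}{\partial x_b} + P(b,c)\frac{\partial \phi_2}{\partial x_a} = 0$, with a dual identity at the $Q$-end coming from $\phi_{n-3}\phi_{n-2} = 0$. Combined with the cyclic symmetry built into $[[\cdots]]$, these identities should force the explicit sum to collapse to exactly $2n$ times a single canonical $\{\cdots\}$-symbol of the prescribed form, matching the $1/(2n)$ normalization.

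The main obstacle is the combinatorial bookkeeping: the factors of $2$ from coincident indices, the cyclic symmetry of $[[\cdots]]$, and the three families of identities must conspire to produce exactly the factor $2n$. The top row (distinct $i, j, k$) is the cleanest but already nontrivial. The lower rows involve coincident indices or contributions from several $\Omega$'s at once, culminating in $c^i_{ii} - c^j_{ij} - c^k_{ik}$, which combines six raw terms (two from each of three $\Omega$'s) and whose shear-invariance is essential to the cancellation. A concrete sanity check is the case $n = 4$: the four raw terms reduce to $\{i, i, j, j\}$ because $\{a, b, b, a\} = A_{ab} B_{ab} - B_{ab} A_{ab} = 0$ for the Gorenstein resolution of Example~\ref{ex:res:n=4}, yielding exactly the $\pm\{jjii\}$ of the $n = 4$ column.
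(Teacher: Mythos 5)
Your overall strategy --- expand $\partial^2\Omega_k/\partial x_i\partial x_j$ by the product rule into $\{\cdots\}$-symbols (with extra factors of $2$ when $i$ or $j$ coincides with $a_1$ or $a_{n-2}$), sum over the $n-2$ cyclic shifts, and collapse the result using identities extracted from $\phi_l\phi_{l+1}=0$ --- is exactly the route the paper takes in Section~\ref{sec:newproof}. The three families of identities you list (symmetry of $P$ and $Q$ in their two arguments, antisymmetry under adjacent transpositions in the middle block, and the Koszul triple identity at either end) correspond to Lemma~\ref{lem:sym}(i),(ii) and Lemma~\ref{lem:triple}. But one essential identity is missing from your list, and without it the collapse fails already in the top row of Table~\ref{table:strconst}: all of your identities keep the $P$-end at the front and the $Q$-end at the back, so none of them can relate $\{i,i,a_1,\ldots,a_{n-4},j,j\}$ to $\{j,j,a_1,\ldots,a_{n-4},i,i\}$. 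The generic term of your cyclic sum is $\{i,a_1,\ldots,a_{n-2},j\}+\{j,a_1,\ldots,a_{n-2},i\}$, which your identities only reduce to $\pm\bigl(\{i,i,\ldots,j,j\}-\{j,j,\ldots,i,i\}\bigr)$; to turn this into $\pm 2\{i,i,\ldots,j,j\}$ one needs the reversal identity $\{i,j,a_1,\ldots,a_{n-4},k,\ell\}=-\{k,\ell,a_1,\ldots,a_{n-4},i,j\}$ of Lemma~\ref{lem:sym}(iii). This is not a formal consequence of $F_\bullet$ being a complex: it comes from the self-duality of the minimal free resolution, i.e. from $I$ being Gorenstein (Lemma~\ref{reverselemma}). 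Your $n=4$ sanity check uses exactly this fact in disguise --- the vanishing $\{a,b,b,a\}=A_{ab}B_{ab}-B_{ab}A_{ab}=0$ rests on $\phi_2$ being $(B,-A)^T$, which is the $n=4$ incarnation of self-duality --- but for general $n$ it must be invoked explicitly.

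Two smaller points. First, your triple identity as stated is only the coefficient of $x_ax_bx_c$ for distinct $a,b,c$; the versions with repeated indices (coefficient of $x_a^2x_b$) carry extra factors of $2$ and are what produce Lemma~\ref{lem:sym}(ii), so they must be tracked in the coincidence bookkeeping that yields $2(n-2)+2+2=2n$. Second, the fourth row $c^i_{ii}-c^j_{ij}-c^k_{ik}$ does not collapse term by term to a single symbol; the paper first shows (Lemma~\ref{lem:indepsum}) that a certain two-term combination $A(i,j)$ is independent of the ordering of its middle arguments, and only then combines the contributions of the three $\Omega$'s with row three of the table. Some device of this kind is needed there; the raw six-term sum does not reduce directly under your listed identities.
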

\begin{proof}
  It is clear from the definition that the symbol $\{ \cdots \}$ does
  not depend on the order of its first two arguments, or the order of
  its last two arguments.

  If $n=4$ then by~\eqref{res:n=4} we have $[i,j] = -[j,i]$ and
  $\{ijk \ell\} = -\{ k \ell i j\}$.  Let $i,j,k$ be an even
  permutation of $1,2,3$. By Definition~\ref{def:brackets} we have
  $\Omega_k = -2[i,j]$.  Using the product rule we compute
\begin{align*}
\frac{\partial^2 [j,i]}{\partial x_i \partial x_j} 
&= 4\{ jjii \} + \{ijij\} = 4\{ jjii \}, \\
\frac{\partial^2 [i,k]}{\partial x_i^2}  
& =  2\{iiik\} + 2\{iiik\} = 4\{ iiik \}, \\
\frac{\partial^2 [i,k]}{\partial x_i \partial x_j} 
+ \frac{\partial^2 [i,j]}{\partial x_i \partial x_k} 
&= 2\{iijk\} + \{ijik\} + 2\{iijk\} + \{ikij\}
= 4\{iijk\},  \\
 \frac{\partial^2 [k,j]}{\partial x_i^2}
 + \frac{\partial^2 [k,i]}{\partial x_i \partial x_j}
 + \frac{\partial^2 [i,j]}{\partial x_i \partial x_k}  
 &= 2\{ikij\} + \{ikij\} + 2\{jkii\} + 2\{iijk\} + \{ikij\}
 = 4 \{ikij\}.
 \end{align*}
 This proves the theorem in the case $n=4$.  We give the proof for
 $n \geqs 5$, and specify the correct choice of signs $\pm$, in
 Theorem~\ref{finaltheorem}.  It may also be checked, using
 Lemma~\ref{lem:sym}, that the expressions in
 Table~\ref{table:strconst} for $n \geqs 4$ do indeed specialise
 to those in the previous two columns when $n=4$ and $n=5$.
\end{proof}

We have now checked that Theorem~\ref{algebra constructed from n
  points} agrees, up to a shear, with the previously known
constructions for $n=3,4,5$. However the structure constants do not
agree exactly, since in Theorem~\ref{algebra constructed from n
  points} the basis elements $\alpha_1,\ldots,\alpha_{n-1}$ are
normalised (up to shear) so that they have trace zero, whereas the
algebras in \cite{delone1964theory, bhargava2004higher, bhargava2008higher}
are normalised so that
\[\begin{array}{ccl}
    n = 3 && c_{12}^1 = c_{12}^2 = 0, \\
    n = 4 && c_{12}^1 = c_{12}^2 = c_{13}^1 = 0, \\
    n = 5 && c_{12}^1= c_{12}^2 = c_{34}^3 = c_{34}^4 = 0. 
  \end{array}\]
In general we could normalise our basis by choosing a convention
such as
\begin{equation} \label{normalise:cyclic}
c_{12}^2 = c_{23}^3 = c_{34}^4 = \ldots = c_{n-2,n-1}^{n-1} = c_{n-1,1}^{1} = 0, 
\end{equation}
or when $n$ is odd 
\begin{equation} \label{normalise:pairwise} c_{12}^1= c_{12}^2 =
  c_{34}^3 = c_{34}^4 = \ldots = c_{n-2,n-1}^{n-2} = c_{n-2,n-1}^{n-1}
  = 0. \end{equation} With either convention, it is clear that there
is a unique way to modify our basis by a shear so that it satisfies
the convention.

Compared to normalising $\alpha_1, \ldots, \alpha_{n-1}$ to have trace
zero, these conventions break symmetry, but have the advantage of
working in all characteristics.  They are also useful for constructing
orders in number fields as we now explain.

We take $K = \Q$ and suppose that the minimal free resolution
$F_{\bullet}$ has integer coefficients, i.e. the differentials
$\phi_k$ are represented by matrices of polynomials in
$\Z[x_1, \ldots, x_{n-1}]$. By Definition~\ref{def:brackets} the
structure constants for the $\Q$-algebra $A$ in Theorem~\ref{algebra
  constructed from n points} are integral, and so determine an order
$B \subset A$. This ring decomposes (as a $\Z$-module) as
$B = \Z \oplus B_0$ where $B_0$ is the subset of elements of trace
zero. Therefore $\Tr_{A/\Z}(B) = n\Z$, and so any prime dividing $n$
necessarily ramifies in $B$.  More generally, this order can never be
maximal at the primes dividing $2n$.  Indeed, if we choose our
structure constants using Table~\ref{table:strconst} and one of the
normalisation conventions~\eqref{normalise:cyclic}
or~\eqref{normalise:pairwise}, then these define an order $B'$ with
$B \subset B' \subset A$. From the factor $2n$ in the statement of
Theorem~\ref{thm:overZ} we see that the index of $B$ in $B'$ is
$(2n)^{n-1}$.  For $n=3,4,5$, the ring $B'$ is the one constructed in
\cite{delone1964theory, bhargava2004higher, bhargava2008higher}.
For general $n$, working with $B'$ rather
than $B$ gives a larger order with smaller discriminant, and hence a
sharper bound in~\cite[Theorem~1.0.1]{LazarThesis}.

\begin{remark} We briefly mention three respects in which the theory
  for $n=3,4,5$, as described in \cite{ggs, bhargava2004higher,
  bhargava2008higher}, is still more
  developed than that for general $n$.
\begin{enumerate}
\item We conjecture than any order in an \'etale $\Q$-algebra of rank
  $n$ is necessarily of the form $B'$ for some minimal free resolution
  $F_\bullet$ with integer coefficients. This is known for
  $n=3,4,5$. It is also true for the ring
  $\Z \times \Z \times \ldots \times \Z$ by the calculations in
  Sections~\ref{structure constants section} and~\ref{wilson
    description section}.  We hope to investigate this conjecture
  further in future work.
\item For $n=3,4,5$ the binary cubics, pairs of ternary quadratics, and
  alternating matrices of linear forms, parameterise all rings of rank
  $n$, including degenerate rings such as $\Z[x]/(x^n)$.  It would be
  interesting to determine if there is a suitable class of
  ``degenerate'' minimal free resolutions for $n >5$ that correspond
  to these rings.
\item The results for $n=3,4,5$ have been used by Davenport and
  Heilbronn \cite{davenport1971density} and Bhargava
  \cite{bhargava2005density, bhargava2010density} to give an
  asymptotic count of number fields of degree $n$ ordered by
  discriminant. Since we do not have a description of the space of
  minimal free resolutions that lends itself to the counting arguments
  used in the geometry of numbers, it remains a difficult problem to
  extend these results to $n > 5$.
\end{enumerate}
\end{remark}

\section{Points in general position and étale algebras}
\label{sec:bijection}

In this section we give a slightly different perspective on the
classical fact that there is an equivalence of categories between the
category of finite sets of points with a continuous action of the
absolute Galois group, and the category of finite dimensional étale
algebras. These ideas feature prominently in works of Bhargava, see
especially the discussion in \cite[page~59]{bhargava2008higher}, as well
as the work of his students Wood \cite{woodrings} and Wilson
\cite{wilson}.

We fix an integer $n \geqs 3$.  Let
$\mc{X}=\{X \subset \PP^{n-2} : \ X \ \text{is a set of} \ n \
\text{points in general position}\}$. Note that $\mc{X}(K)$ consists
of sets $X$ which are defined over $K$, viewed as zero-dimensional
varieties, but the individual (geometric) points of $X$ need not be
defined over $K$.  The group $\mathrm{PGL}_{n-1}(K)$ acts on
$\mc{X}(K)$ by changes of coordinates.
	
An $n$-dimensional commutative $K$-algebra $A$ is {\em non-degenerate}
if the trace form associated to $A$ is non-degenerate. This is
equivalent to requiring that $A$ is étale over $K$, or that there
exists an isomorphism $A \otimes_{K} \bar{K} \cong \bar{K}^n$.  For
example, the ring $A=\Gamma(X,\mc{O}_X)$ of global functions on a set
of $n$ points $X \in \mc{X}(K)$ is a non-degenerate $K$-algebra of
dimension $n$.

The following fact seems to be well-known, but we could not find an
adequate reference.

\begin{proposition} \label{algebras and points lemma} Let $\mc{A}$ be
  the set of isomorphism classes of non-degenerate $n$-dimensional
  $K$-algebras. Then the map $X \mapsto \Gamma(X,\mc{O}_{X})$ induces
  a bijection between the set of $\mathrm{PGL}_{n-1}(K)$-orbits of
  $\mc{X}(K)$ and the set $\mc{A}$.
\end{proposition}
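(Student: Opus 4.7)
The plan is to handle well-definedness of the map, and then separately verify surjectivity and injectivity on orbits, building the inverse via the trace-zero subspace $A_0\subset A$. Since the trace form of a non-degenerate algebra is non-degenerate, $A_0=\ker(\Tr)$ has $K$-dimension $n-1$.

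Well-definedness and the claim that $\Gamma(X,\mc{O}_X)$ is always non-degenerate are routine: a $\PGL_{n-1}(K)$-change of coordinates is a $K$-isomorphism of subschemes, and the general position hypothesis forces the $n$ geometric points of $X$ to be distinct (otherwise at most $n-1$ distinct points would span at most a hyperplane), so $\Gamma(X,\mc{O}_X)\otimes_K\bar{K}\cong\bar{K}^n$ is étale.

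For surjectivity I would pick a $K$-basis $\alpha_1,\ldots,\alpha_{n-1}$ of $A_0$ and define $\iota\colon \Spec A\to\PP^{n-2}$ by $P\mapsto(\alpha_1(P):\cdots:\alpha_{n-1}(P))$. Writing $A\otimes_K\bar{K}=\bigoplus_k\bar{K} e_k$ with $e_k$ the idempotent at the geometric point $P_k$, the subspace $A_0\otimes_K\bar{K}$ is the hyperplane $\sum_k a_k=0$. A single observation then suffices: every projection $\bar{K}^n\to\bar{K}^{n-1}$ obtained by dropping one coordinate restricts to an isomorphism on this hyperplane. This simultaneously yields that $\iota$ is defined at every $P_k$, is injective on geometric points (so is a closed immersion), and has image in $\mc{X}(K)$; by construction $\Gamma(\iota(\Spec A),\mc{O})\cong A$.

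Injectivity on orbits will be the main obstacle, and is where the general position condition enters essentially. A closed embedding $\Spec A\hookrightarrow\PP^{n-2}$ is equivalent to giving the invertible $A$-module $M=\mc{O}(1)|_{\Spec A}$ together with an $(n-1)$-dimensional $K$-subspace $V\subset M$ that generates $M$ over $A$. Since $A$ is étale, $\operatorname{Pic}(A)=0$, so $M$ is (non-canonically) isomorphic to $A$ as an $A$-module and $V$ becomes a $K$-subspace of $A$ well-defined up to multiplication by a unit of $A$; two embeddings are $\PGL_{n-1}(K)$-equivalent iff their subspaces lie in a common $A^{\times}$-orbit. The task therefore reduces to showing that the general-position condition cuts out a single $A^{\times}$-orbit of such $V$, containing $A_0$. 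By non-degeneracy of the trace form, any codimension-$1$ subspace $V\subset A$ equals $u^{\perp}=\{a\in A:\Tr(au)=0\}$ for some $u\in A$, unique up to $K^{\times}$. Over $\bar{K}$, $u=(\lambda_1,\ldots,\lambda_n)\in\bar{K}^n$ is the vector of coefficients of the defining functional $\sum_k\lambda_k e_k^*$ of $V\otimes\bar{K}$, and general position translates to all $\lambda_k\neq 0$, i.e., $u\in A^{\times}$. Multiplication by $u$ then sends $V=u^{\perp}$ onto $\ker(\Tr)=A_0$, as required.
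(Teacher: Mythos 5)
Your proof is correct and follows essentially the same route as the paper: triviality of the Picard group of an \'etale algebra reduces embeddings of $\Spec A$ to $(n-1)$-dimensional subspaces $V\subset A$ up to multiplication by $A^\times$, and the heart of the matter in both arguments is the equivalence between general position of the image and the annihilator of $V$ under the trace pairing being spanned by a unit, which places every admissible $V$ in the single $A^\times$-orbit of the trace-zero subspace. The only real difference is in how that equivalence is checked: you identify the vanishing of a coordinate $\lambda_k$ of $u$ with the degeneracy of the corresponding $(n-1)\times(n-1)$ minor, whereas the paper reduces to the case where the element of $V^\perp$ is an idempotent and derives a linear dependence among a proper subset of the points.
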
	

Any two elements of $\mc{X}(K)$ that lie in the same
$\mathrm{PGL}_{n-1}(K)$-orbit have isomorphic rings of global
functions, and so map to the same element of $\mc{A}$. Therefore the
map in Proposition~\ref{algebras and points lemma} is well defined. We
must show it is a bijection. First we need two lemmas.

\begin{lemma} \label{projective trivial}
  \begin{enumerate} \item Let $A$ be a
  non-degenerate $n$-dimensional $K$-algebra.  Let $M$ be a locally
  free $A$-module of rank 1. Then $M$ is free,
  i.e., it is isomorphic to $A$ as an $A$-module.  
  \item Let $X \subset \PP^{n-2}$ be a set of $n$ distinct points, and
  let $A = \Gamma(X,\mc{O}_{X})$. Then $X$ is the image of a map
  $\Spec A \to \PP^{n-2}$ given by
  $(\alpha_1 : \ldots : \alpha_{n-1})$ for some
  $\alpha_1, \ldots, \alpha_{n-1} \in A$.
  \end{enumerate}
\end{lemma}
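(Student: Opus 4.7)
The plan for (i) is to reduce to the case of a field by invoking the structure theorem for étale $K$-algebras. First I would note that non-degeneracy of $A$ is equivalent to the existence of an isomorphism $A \otimes_K \bar{K} \cong \bar{K}^n$ (as already observed in the excerpt); in particular $A \otimes_K \bar{K}$ is reduced, so $A$ itself is reduced and Artinian, hence étale, and decomposes as a finite product $A \cong K_1 \times \cdots \times K_r$ of finite separable field extensions of $K$. Correspondingly $\Spec A$ is a disjoint union of the points $\Spec K_i$, so a locally free $A$-module $M$ of rank $1$ splits as $M \cong M_1 \times \cdots \times M_r$ with each $M_i$ a one-dimensional $K_i$-vector space. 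Since a one-dimensional vector space over a field is free of rank one, each $M_i \cong K_i$ and therefore $M \cong A$.

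For (ii), the plan is to deduce the statement from (i) via the universal property of projective space. Since $X$ is zero-dimensional, it is affine and equals $\Spec A$ with $A = \Gamma(X, \mc{O}_X)$, and the closed embedding $X \hookrightarrow \PP^{n-2}$ corresponds to a morphism $\Spec A \to \PP^{n-2}$. Such a morphism is classified by a pair $(L,(s_1,\ldots,s_{n-1}))$ consisting of a line bundle $L$ on $\Spec A$ together with $n-1$ global sections that generate $L$. The hypothesis that the $n$ geometric points are distinct yields $A \otimes_K \bar{K} \cong \bar{K}^n$, so $A$ is non-degenerate; part (i) then forces $L \cong A$. Fixing a trivialization identifies each section $s_i$ with an element $\alpha_i \in A$, and the morphism is expressed as $(\alpha_1 : \ldots : \alpha_{n-1})$, with scheme-theoretic image $X$.

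Neither step presents a serious obstacle: (i) is a direct application of the structure theory of étale algebras together with the observation that Picard groups of fields are trivial, and (ii) is essentially a bookkeeping exercise with the functor of points of $\PP^{n-2}$, for which (i) supplies the one nontrivial ingredient, namely the triviality of the tautological line bundle restricted to $\Spec A$.
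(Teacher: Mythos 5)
Your proposal is correct and follows essentially the same route as the paper: part (i) via the decomposition of the étale algebra $A$ into a product of fields (the paper phrases this with idempotents $e_i$ and takes $f=\sum f_i$ as a generator), and part (ii) by observing that the embedding is given by sections of the line bundle $\mc{O}_X(1)$, which is free by (i), so the sections become elements $\alpha_i$ of $A$. Your explicit remark that distinctness of the $n$ geometric points already forces $A\otimes_K\bar K\cong\bar K^n$, hence non-degeneracy, is a point the paper leaves implicit but is the same argument.
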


\begin{proof}
  (i) Since $A$ is non-degenerate it is isomorphic as an algebra to a
  direct product of fields, say,
  $A \cong A_1 \times \cdots \times A_k$. For each $1\leqs i\leqs k$,
  let $e_i$ be the idempotent corresponding to the factor $A_i$, so
  that $\sum^{k}_{i=1} e_i=1$, $Ae_i \cong A_i$ as an $A$-module,
  $e_i^2=e_i$ and $e_ie_j=0$ for all $i \not= j$.  Then we have the
  decomposition $M=e_1M\oplus e_2M \oplus \cdots \oplus e_kM$, where
  each module $e_iM$ is an $A_i$-vector space. Since $M$ is locally
  free of rank 1, each $e_iM$ is 1-dimensional, and so we may choose a
  basis vector $f_i$. Then $M=Af \cong A$, where $f=\sum^{k}_{i=1} f_i$. \\
  (ii) The embedding of $X = \Spec A$ in $\PP^{n-2}$ is determined by
  global sections $\ell_1, \ldots, \ell_{n-1}$ belonging to the
  $A$-module $M = \Gamma(X, \mc{O}_X(1))$. We see by (i) that $M$ is a
  free $A$-module of rank 1, say generated by $m \in M$.  We then
  write $\ell_i = \alpha_i m$ for some $\alpha_i \in A$.
\end{proof}

\begin{lemma} \label{unit-genpos} Let $A$ be a non-degenerate
  $n$-dimensional $K$-algebra.  Let $X$ be the image of the map
  $\Spec A \to \PP^{n-2}$ given by
  $(\alpha_1 : \ldots : \alpha_{n-1})$ for some
  $\alpha_1, \ldots, \alpha_{n-1} \in A$. Then $X$ is a set of $n$
  points in general position if and only if there exists a unit
  $\lambda \in A^\times$ such that
  $\lambda \alpha_1, \ldots, \lambda \alpha_{n-1}$ is a $K$-basis for
  the trace zero subspace of~$A$.  Moreover, if such a $\lambda$
  exists then it is unique up to multiplication by an element of
  $K^\times$.
\end{lemma}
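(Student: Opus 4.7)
The plan is to pass to the algebraic closure, translate the geometric general position condition into linear algebra for an explicit matrix, then descend back to $K$ by Galois invariance. Using $A \otimes_K \bar{K} \cong \bar{K}^n$, let $e_1,\ldots,e_n \colon A \otimes \bar{K} \to \bar{K}$ be the $n$ idempotent projections, and form the $n \times (n-1)$ matrix $N$ with entries $N_{ij} = e_i(\alpha_j)$; the rows of $N$ are homogeneous coordinates for the $n$ geometric points of $X$.

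The first key step is a linear-algebra translation. I would show that ``$X$ is a set of $n$ points in general position'' is equivalent to every $(n-1)\times(n-1)$ minor of $N$ being nonzero (the minor condition already forces all rows to be nonzero and pairwise non-proportional, so that we have $n$ distinct points, in addition to the general-position property). By cofactor expansion, this minor condition is in turn equivalent to $\ker(N^T \colon \bar{K}^n \to \bar{K}^{n-1})$ being one-dimensional and spanned by a vector with all nonzero entries. Identifying $\bar{K}^n$ with $A\otimes\bar{K}$ via the $e_i$, a tuple $\mu_i = e_i(\lambda)$ lies in $\ker N^T$ precisely when
\[
  \Tr(\lambda\alpha_j) \;=\; \sum_i e_i(\lambda)\,e_i(\alpha_j) \;=\; 0 \qquad (j = 1,\ldots,n-1),
\]
and has all entries nonzero precisely when $\lambda$ is a unit in $A\otimes\bar{K}$. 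Thus general position is equivalent to the algebraic statement: there exists a unit $\lambda \in (A\otimes\bar{K})^\times$ with each $\lambda\alpha_j$ of trace zero, and such $\lambda$ are unique up to $\bar{K}^\times$.

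For the descent, I would let $W = \{\lambda \in A : \Tr(\lambda\alpha_j) = 0 \text{ for all } j\}$, the kernel of an explicit $K$-linear map $A \to K^{n-1}$; flatness of $\bar{K}/K$ gives $W \otimes_K \bar{K} = \ker N^T$ and hence $\dim_K W = \dim_{\bar{K}} \ker N^T$. Under the general-position hypothesis this produces a one-dimensional $W$; any nonzero generator $\lambda$ is a unit in $A\otimes\bar{K}$ (from the all-nonzero-entries description) and therefore a unit in $A$, and uniqueness up to $K^\times$ is immediate from $\dim_K W = 1$. For the basis assertion, the $\lambda\alpha_j$ lie in the $(n-1)$-dimensional trace-zero subspace and span it iff they are $K$-linearly independent, iff the $\alpha_j$ are (since $\lambda$ is a unit), iff $\operatorname{rank} N = n-1$; this is guaranteed by general position. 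The reverse implication is symmetric: given such $\lambda$, linear independence of the $\alpha_j$ forces $\operatorname{rank} N = n-1$, so $\ker N^T$ is one-dimensional with generator $(e_i(\lambda))_i$, whose entries are all nonzero as $\lambda$ is a unit. The main obstacle is the bookkeeping in the first step---verifying that the single minor condition faithfully encodes both distinctness of the $n$ image points and the general-position property, and matches the all-nonzero-entries condition on $\ker N^T$; once this dictionary is in place the descent, basis, and uniqueness claims follow by essentially formal arguments.
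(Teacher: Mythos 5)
Your proof is correct. The overall framework --- base-change to $\bar{K}$, identify $A \otimes_K \bar{K}$ with $\bar{K}^n$ via the $n$ characters, and read the conditions $\Tr(\lambda\alpha_j)=0$ as the linear system $N^T\mu = 0$ --- matches the paper's, but the key step is handled by a genuinely different mechanism. The paper shows that a nonzero trace-orthogonal element $\lambda$ is a unit by a ring-theoretic reduction: it writes $\lambda$ as a unit times an idempotent $e$ (absorbing the unit into the $\alpha_j$, which does not change $X$), and then observes that the trace conditions force the points indexed by the support of $e$ to sum to zero in $\bar{K}^{n-1}$, which would put $n-1$ of the points in a hyperplane unless $e=1$. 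You instead identify $\ker N^T$ explicitly via Cramer's rule as the span of the vector of signed maximal minors of $N$, so that ``all entries nonzero'' (i.e.\ $\lambda$ a unit in $\bar{K}^n$) becomes literally equivalent to ``all maximal minors nonzero'' (i.e.\ general position together with distinctness of the $n$ image points). Your dictionary buys a symmetric treatment of both directions of the equivalence --- the converse implication, which the paper leaves largely implicit, falls out for free --- at the cost of the bookkeeping you acknowledge in verifying that the single minor condition also encodes distinctness. The descent to $K$ is the same in both: a dimension count for $W = \{\lambda \in A : \Tr(\lambda\alpha_j)=0\}$, non-degeneracy of the trace form for uniqueness, and the fact that an element of $A$ that becomes a unit in $A\otimes_K\bar{K}$ is already a unit in $A$.
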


\begin{proof}
  We may assume that $\alpha_1, \ldots, \alpha_{n-1}$ are linearly
  independent over $K$, since otherwise it is clear that neither
  condition is satisfied.

  The uniqueness follows from the non-degeneracy of the trace
  form. For existence, it is clear by linear algebra over $K$ that
  there exists non-zero $\lambda \in A$ such that
  $\Tr_{A/K} (\lambda \alpha_j) = 0$ for all $1 \leqs j \leqs n-1$. It
  remains to show that $\lambda$ is a unit.

  Since $A$ is isomorphic to a product of fields, we may write any
  element of $A$ as a unit times an idempotent. It therefore suffices
  to consider $\lambda$ an idempotent.  Writing
  $\sigma_1, \ldots, \sigma_n$ for the distinct $K$-algebra
  homomorphisms $A \to \bar{K}$ we have
  \[ X(\bar{K}) = \{ (\sigma_i(\alpha_1) : \ldots :
    \sigma_i(\alpha_{n-1})) : 1 \leqs i \leqs n \} \subset
    \PP^{n-2}(\bar{K}). \] Since $\lambda$ is an idempotent we may
  order the $\sigma_i$ such that
  $(\sigma_1(\lambda), \ldots, \sigma_n(\lambda)) = (1, \ldots, 1,0,
  \ldots 0)$ where there are (say) $m$ ones and $n-m$ zeros.  Then
  $\sum_{i=1}^m \sigma_i(\alpha_j) = \Tr_{A/K}(\lambda \alpha_j) =0$
  for all $1 \leqs j \leqs n-1$. Since $X$ is in general position, this
  forces $m=n$, and so $\lambda=1$ as required.
\end{proof}

\medskip

\noindent
{\em Proof of Proposition~\ref{algebras and points lemma}.}
First, to prove surjectivity, we suppose that $A$ is a non-degenerate
$n$-dimensional $K$-algebra.  Then we pick
$\alpha_1, \ldots, \alpha_{n-1}$ a $K$-basis for the trace zero
subspace of $A$, and let $X$ be the image of the map
$\Spec A \to \PP^{n-2}$ given by $(\alpha_1 : \ldots : \alpha_{n-1})$.
Lemma~\ref{unit-genpos} shows that $X$ is in general position, and we
then have $X \mapsto A$.  Next, to prove injectivity, we suppose that
$X_1$ and $X_2$ both map to $A$.  By Lemmas~\ref{projective trivial}
and~\ref{unit-genpos} both $X_1$ and $X_2$ are embedded in $\PP^{n-2}$
using (possibly) different choices of bases for the trace zero
subspace of $A$.  Hence there exists an element of
$\mathrm{PGL}_{n-1}(K)$ taking $X_1$ to $X_2$.  \qed

\section{Overview of the proof}
\label{sec:overview}

Let $V=\langle x_1,\ldots,x_{n-1} \rangle$ be the space of linear
forms on $\mathbb{P}^{n-2}$, and denote the dual basis of $V^{*}$ by
$x^*_1,\ldots,x_{n-1}^{*}$. The quadratic forms
$\Omega_1, \ldots, \Omega_{n-1}$ determine an element $\Omega$ in
$V^* \otimes S^2 V $ via the formula
\begin{equation}
  \label{def:Omega}
  \Omega := \sum_{j=1}^{n-1} x^*_{j} \otimes \Omega_{j}.
\end{equation}

The following proposition, which we prove in Section~\ref{sec:change},
shows that the construction of $\Omega$ from the minimal free
resolution is invariant under change of basis of $V$.

\begin{proposition} \label{Omega quad change of coordinates} Let
  $F_{\bullet}$ be a minimal free resolution for a set
  $X \subset \PP^{n-2}$ of $n$ points in general position. We write
  $x_1, \ldots, x_{n-1}$ for our coordinates on $\PP^{n-2}$.  Let
  $x'_j=\sum_{i=1}^{n-1} g_{ij}x_i$ for some
  $g=(g_{ij}) \in \mathrm{GL}_{n-1}$. Writing
  $\phi_1, \ldots, \phi_{n-2}$ for the matrices representing the maps
  in the resolution $F_{\bullet}$, let $F'_{\bullet}$ be the
  resolution whose maps are given by matrices
  $\phi'_1,\ldots,\phi'_{n-2}$ where
  \[ \phi'_r(x_1,\ldots,x_{n-1})=\phi_r(x'_1,\ldots,x'_{n-1}). \] Let
  $\Omega$ and $\Omega'$ be the elements of $V^* \otimes S^2 V $
  associated to the resolutions $F_{\bullet}$ and $F'_{\bullet}$
  respectively. Then we have
  \[ \Omega'=(\det g) (g \cdot \Omega), \] where the action of
  $g$ on $\Omega$ is the standard action of $\mathrm{GL}_{n-1}$ on
  $ V^* \otimes S^2 V $.
\end{proposition}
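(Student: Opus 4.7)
The plan is to apply the chain rule to $\phi'_r(x) = \phi_r(g^T x)$, expand the bracket symbols of $F'_\bu$ as a sum of products of entries of $g$ times the original bracket symbols of $F_\bu$, and then invoke Lemma~\ref{lem:sym} to collapse that sum into a Laplace-type expansion in the cofactors of $g$. First, since $\partial x'_b/\partial x_a = g_{ab}$, the chain rule gives
\[
\frac{\partial \phi'_r}{\partial x_a}(x) = \sum_{b=1}^{n-1} g_{ab}\,(\partial_b \phi_r)(g^T x),
\]
and multiplying across $r=1, \ldots, n-2$ yields the multilinear transformation law
\[
[a_1, \ldots, a_{n-2}]_{F'_{\bu}}(x) = \sum_{b_1, \ldots, b_{n-2}} g_{a_1 b_1}\cdots g_{a_{n-2} b_{n-2}}\,[b_1, \ldots, b_{n-2}]_{F_{\bu}}(g^T x).
\]
The cyclic reindexing of Definition~\ref{def:brackets}(ii) carries through identically (replace each $b_r$ by $b_{\sigma^{2k}(r)}$ inside the sum), so the same formula holds with $[[\,\cdot\,]]$ in place of $[\,\cdot\,]$.

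Substituting $(a_1, \ldots, a_{n-2}) = (1, \ldots, \widehat{k}, \ldots, n-1)$ and multiplying by $(-1)^k$ next expresses $\Omega'_k(x)$ as
\[
(-1)^k \sum_{c_1, \ldots, c_{n-2}} \Bigl(\prod_{s=1}^{n-2} g_{i^k_s,\, c_s}\Bigr)\,[[c_1, \ldots, c_{n-2}]]_{F_{\bu}}(g^T x),
\]
where $i^k_1 < \cdots < i^k_{n-2}$ lists $\{1, \ldots, n-1\} \setminus \{k\}$. Here I appeal to Lemma~\ref{lem:sym}, which I expect to assert that $[[c_1, \ldots, c_{n-2}]]_{F_{\bu}}$ is alternating under arbitrary permutations of its arguments (not merely under the cyclic shifts by $\sigma^2$ built into its definition). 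Granting this, $[[c_1, \ldots, c_{n-2}]]$ vanishes whenever two $c_i$'s coincide, so the sum is supported on those $(c_1, \ldots, c_{n-2})$ whose underlying set is $\{1, \ldots, \widehat{j}, \ldots, n-1\}$ for some $j$, and we may replace $[[c_1, \ldots, c_{n-2}]]$ by $\operatorname{sign}(\pi)\,[[1, \ldots, \widehat{j}, \ldots, n-1]]$, where $\pi$ is the permutation that sorts $(c_1, \ldots, c_{n-2})$ in increasing order.

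Grouping terms by $j$ and recognizing the inner sum over $\pi$ as a determinant then produces
\[
\Omega'_k(x) = (-1)^k \sum_{j=1}^{n-1} \det\!\bigl(g^{(k,j)}\bigr)\,[[1, \ldots, \widehat{j}, \ldots, n-1]]_{F_{\bu}}(g^T x),
\]
where $g^{(k,j)}$ denotes the $(n-2)\times (n-2)$ minor of $g$ obtained by deleting row $k$ and column $j$. Applying the cofactor identity $\det(g^{(k,j)}) = (-1)^{k+j}(\det g)(g^{-T})_{kj}$ together with $\Omega_j = (-1)^j[[1, \ldots, \widehat{j}, \ldots, n-1]]$ simplifies this to
\[
\Omega'_k(x) = (\det g)\sum_{j=1}^{n-1} (g^{-T})_{kj}\,\Omega_j(g^T x),
\]
which is exactly the coefficient of $x^*_k$ in $(\det g)(g \cdot \Omega)(x)$ under the standard $\mathrm{GL}_{n-1}$-action on $V^* \otimes S^2 V$. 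The main obstacle is the appeal to Lemma~\ref{lem:sym}: without the full alternating property of $[[\,\cdot\,]]$, the inner sum would collapse to a permanent-like expression rather than a determinant, and the factor $\det g$ would fail to emerge cleanly from the computation.
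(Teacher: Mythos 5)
Your proof is correct, but it takes a genuinely different route from the paper. You expand $[a_1,\ldots,a_{n-2}]_{F'_\bu}$ multilinearly via the chain rule, use the alternating property of $[[\,\cdots]]$ to kill the terms with a repeated index and sort the rest, and recognise the resulting inner sum as the $(k,j)$ minor of $g$, so that the factor $\det g$ and the matrix $g^{-T}$ emerge from the cofactor identity in one stroke. The paper instead verifies the transformation law $\Omega'=(\det g)(g\cdot\Omega)$ separately for $g$ a diagonal matrix, a transposition matrix, and a unipotent matrix $I+tE_{21}$, which suffices since these generate $\GL_{n-1}$; each case is an easy chain-rule computation, with the permutation case using the full alternating property and the unipotent case using only the vanishing of $[[\,\cdots]]$ when an index repeats. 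Both arguments rest on the same key input, which in the paper is Lemma~\ref{double bracket permute} (the alternating property of the double bracket), not Lemma~\ref{lem:sym} as you guessed --- the latter concerns the brace symbols $\{\cdots\}$ of Section~\ref{sec:orders} and is irrelevant here; you should also note that deducing the vanishing of $[[c_1,\ldots,c_{n-2}]]$ with a repeated argument from the alternating property uses that the characteristic is not $2$, which is part of the standing hypotheses. Your approach buys a uniform, generator-free derivation that makes the appearance of $\det g$ conceptually transparent as a Laplace expansion; the paper's approach trades that for three very short verifications and avoids manipulating the full $(n-1)^{n-2}$-term multilinear sum.
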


Our main theorem (Theorem~\ref{algebra constructed from n points})
gives an expression for the structure constants of the algebra $A$ in
terms of the quadratic forms $\Omega_1, \ldots, \Omega_{n-1}$
determined by a minimal free resolution of $X$. In fact we prove the
following strengthening of that theorem.

\begin{theorem}\label{structure constants theorem}
  Let $X \subset \PP^{n-2}$ be a set of $n$ points in general
  position. Let $A = \Gamma(X,\mc{O}_X)$ be the coordinate ring of $X$
  and let $V$ be the trace zero subspace of $A$.
  \begin{enumerate}
  \item There is a $K$-basis $\alpha_1, \ldots, \alpha_{n-1}$ of $V$,
    unique up to multiplication by an overall scalar, such that the
    embedding of $X = \Spec A$ in $\PP^{n-2}$ is given by
    $(\alpha_1: \ldots : \alpha_{n-1})$.
  \item Let $\alpha_1, \ldots, \alpha_{n-1}$ be as in (i) and let
    $\alpha_0^*, \alpha_1^*, \ldots, \alpha_n^*$ be the basis for $A$
    that is dual to $1, \alpha_1, \ldots, \alpha_{n-1}$ with
    respect to the trace pairing $(x,y) \mapsto \Tr_{A/K}(xy)$.  If
    $\Omega_1, \ldots, \Omega_{n-1}$ are the quadratic forms
    determined by a minimal free resolution of $X$ then there exist
    constants $\lambda, c^{0}_{ij} \in K$ such that
    \begin{equation}
      \label{eqn:sc}
      \alpha^*_i \alpha^*_j=c^{0}_{ij}+ \lambda \sum^{n-1}_{k=1} 
      \frac{\partial^2 \Omega_k}{\partial x_i \partial x_j} \alpha^*_k,
    \end{equation}
    for all $1 \leqs i,j \leqs n-1$.
  \end{enumerate}
\end{theorem}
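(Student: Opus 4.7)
Part~(i) is essentially a repackaging of Section~\ref{sec:bijection}. By Lemma~\ref{projective trivial}(ii) the embedding $\Spec A \hookrightarrow \PP^{n-2}$ cutting out $X$ is of the form $(\beta_1:\ldots:\beta_{n-1})$ for some $\beta_i \in A$. Since $X$ is in general position, Lemma~\ref{unit-genpos} provides a unit $\mu \in A^{\times}$, unique up to $K^{\times}$, such that $\alpha_i := \mu \beta_i$ form a $K$-basis for the trace zero subspace $V$. Rescaling by a unit does not change the projective embedding, and the uniqueness of $\mu$ gives the claimed uniqueness of $\alpha_1,\ldots,\alpha_{n-1}$ up to a common scalar.

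For part~(ii), the plan is to reduce to the standard configuration
\[ X_0 = \{(1:0:\ldots:0),(0:1:0:\ldots:0),\ldots,(0:\ldots:0:1),(1:1:\ldots:1)\} \]
by combining the $\GL_{n-1}$-covariance of $\Omega$ (Proposition~\ref{Omega quad change of coordinates}) with a Galois descent argument. Over $\bar K$, the group $\PGL_{n-1}(\bar K)$ acts transitively on sets of $n$ points in general position in $\PP^{n-2}(\bar K)$, so there exists $g \in \GL_{n-1}(\bar K)$ whose corresponding change of variables carries the ideal of $X_0$ to the ideal of $X$; Proposition~\ref{Omega quad change of coordinates} then describes how $\Omega$ transforms. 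Under the induced isomorphism $A \otimes_K \bar K \isom \bar K^n = \Gamma(X_0,\mc{O}_{X_0})$, the trace zero basis $\alpha_1,\ldots,\alpha_{n-1}$ from part~(i) goes, up to a scalar in $\bar K^{\times}$, to a standard trace zero basis of $\bar K^n$ adapted to $X_0$; the dual basis $\alpha_i^*$ transforms accordingly. Both sides of~(\ref{eqn:sc}) therefore transform compatibly under $g$, which reduces the problem to the case $X=X_0$.

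For the standard configuration $X_0$ the identity~(\ref{eqn:sc}) will be verified directly in Sections~\ref{structure constants section} and~\ref{wilson description section}, using Wilson's explicit description of the minimal free resolution of $X_0$ from~\cite{wilson}, followed by an expansion of the brackets in Definition~\ref{def:brackets} and a combinatorial comparison with the known multiplication in $\Gamma(X_0,\mc{O}_{X_0}) = \bar K^n$. This is the main obstacle and where the bulk of the work lies. Given that verification, the existence of $\lambda$ and $c_{ij}^0$ over $\bar K$ for a general $X$ follows by applying $g$ and tracking scalars; to descend back to $K$, one notes that the structure constants $c_{ij}^k = \Tr_{A/K}(\alpha_i^*\alpha_j^*\alpha_k)$ and the entries $\partial^2 \Omega_k/\partial x_i \partial x_j$ all lie in $K$, so $\lambda$ is pinned down as a ratio of $K$-rational quantities, and $c_{ij}^0$ is then recovered from the other constants by Remark~\ref{rem:c0}.
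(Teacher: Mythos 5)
Your proposal is correct and takes essentially the same approach as the paper: part (i) from Lemmas~\ref{projective trivial} and~\ref{unit-genpos}, and part (ii) by using Proposition~\ref{Omega quad change of coordinates} together with an extension of the base field to reduce to the standard set of $n$ points, where the identity is checked against the explicit formula $\Omega_i = n x_i^2 - 2 x_i \sum_{j} x_j$ obtained from Wilson's resolution (Lemma~\ref{quadric coomputation lemma}, proved in Sections~\ref{structure constants section} and~\ref{wilson description section}). The only difference is bookkeeping: the paper carries out the multiplication table and dual-basis computation for $\bar{K}^n$ inside the proof of the theorem itself, pinning down $\lambda = 1/2$, whereas you fold that elementary step into the deferred verification.
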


\begin{proof}
  (i) This follows from Lemmas~\ref{projective trivial}
  and~\ref{unit-genpos}. \\
  (ii) We claim we are free to make changes of coordinates on
  $\PP^{n-2}$. This is proved by considering the effect of such a
  change of coordinates on each term in \eqref{eqn:sc}.  We may
  organise this calculation as follows.  First we use the trace
  pairing to identify $A=(K \cdot 1) \oplus V^{*}$. Then
  multiplication in $A$ determines a symmetric bilinear map
  $V^{*} \times V^{*} \xrightarrow{} V^{*}$ and hence an element of
  $V^{*} \otimes S^2 V$. Next we use part (i) of the theorem to
  identify $V$ with the space of linear forms on $\PP^{n-2}$.  The
  quadratic forms $\Omega_1 ,\ldots, \Omega_{n-1}$ determine an
  element $\Omega$ in $V^{*} \otimes S^2 V$ via \eqref{def:Omega}. The
  theorem asserts that these two elements of $V^{*} \otimes S^2 V$ are
  equal, up to multiplication by a scalar. To prove our claim it
  suffices to show that these two elements transform, under a change
  of coordinates, according to the natural action of $\GL(V)$ on
  $V^{*} \otimes S^2 V$. In the first case this is clear from the
  construction, and in the second case we use Proposition~\ref{Omega
    quad change of coordinates}.

  We are also free to extend our base field $K$, and so may assume by
  a change of coordinates that $X$ is the standard set of $n$ points
  in general position given by
  $P_1=(1:0\ldots:0),P_2=(0:1:0\ldots:0),\ldots,P_{n-1}=(0:\ldots:0:1)$
  and $P_{n}=(1:1:\ldots:1)$.  Having reduced to this special case, we
  identify $A \cong K^n$ via
  $\alpha \mapsto (\alpha(P_1), \ldots, \alpha(P_n))$. Then the
  $K$-basis for the trace zero subspace as determined in part (i) of
  the theorem is
  \begin{equation*}
    \begin{split}
      \alpha_1&=\frac{1}{n}(1,0,0,\ldots,0,-1), \\
      \alpha_2&=\frac{1}{n}(0,1,0,\ldots,0,-1),\\
      & \hspace{4em} \vdots\\
      \alpha_{n-1}&=\frac{1}{n}(0,0,0,\ldots,1,-1),
    \end{split}
  \end{equation*}
  where at this stage the overall scaling by a factor $1/n$ is
  arbitrary, but has been chosen to simplify the calculations that
  follow. The multiplication on $K^n$ is given by multiplication in
  each component separately, and the trace pairing on $K^n$ is given
  by the standard dot product. Following the statement of part (ii) of
  the theorem, we compute:
  \begin{equation*}
    \begin{split}
      \alpha^*_1&=(n-1,-1,-1,\ldots,-1), \\
      \alpha^*_2&=(-1,n-1,-1,\ldots,-1),\\
      & \hspace{5em} \vdots\\
      \alpha^*_{n-1}&=(-1,-1,\ldots,n-1,-1).
    \end{split}
  \end{equation*}
  Then for $1 \leqs i,j \leqs n-1$, the multiplication is given by
  \[ \alpha^*_i\alpha^*_j = \left\{ \begin{array}{ll}
  -1 - \alpha^*_i - \alpha^*_j & \text{ if } i \ne j, \\ (n-1)
  + (n-2) \alpha_i^* & \text{ if } i = j. \end{array} \right.\]
  Equation~\eqref{eqn:sc} with $\lambda = 1/2$ then follows from the
  next lemma.
\end{proof}

\begin{lemma} \label{quadric coomputation lemma} Let
  $X \subset \PP^{n-2}$ be the standard set of $n$ points in general
  position. Then the quadratic forms $\Omega_i$ are given, up to an
  overall scalar, by $\Omega_i=n x_i^2-2x_i \sum^{n-1}_{j=1}x_j$ for
  all $1\leqs i \leqs n-1$.
\end{lemma}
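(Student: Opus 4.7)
My plan is to exploit the large projective symmetry of the standard configuration $X$, combined with the scalar-uniqueness of $\Omega$ (Remark~\ref{defined up to scalar}) and the transformation law in Proposition~\ref{Omega quad change of coordinates}. The symmetric group $S_n$ acts on $X$ by projective transformations, and these should pin down $\Omega$ almost completely as an element of $V^* \otimes S^2 V$.

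First I would invoke the subgroup $S_{n-1} \subset S_n$ permuting the coordinate points $P_1, \ldots, P_{n-1}$. On $\PP^{n-2}$ this is realised simply by the corresponding permutations of $x_1, \ldots, x_{n-1}$. Since they preserve $X$ setwise, they preserve the minimal free resolution up to isomorphism, and hence preserve $\Omega$ up to an overall scalar. Tracking the action on both factors of $\Omega = \sum_j x_j^* \otimes \Omega_j$, I expect to reduce to determining a single $\Omega_1$, which must be invariant under the stabiliser $S_{n-2}$ permuting $x_2, \ldots, x_{n-1}$, and hence of the form
\[
\Omega_1 = A\, x_1^2 + B\, x_1 \Big( \sum_{j=2}^{n-1} x_j \Big) + C\, \Big( \sum_{j=2}^{n-1} x_j \Big)^2 + D\, \sum_{j=2}^{n-1} x_j^2
\]
for constants $A,B,C,D\in K$, with analogous formulas for the other $\Omega_i$ obtained by swapping $x_1 \leftrightarrow x_i$.

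Next I would bring in the transposition $(1,n)\in S_n$ swapping $P_1$ and $P_n$. This is realised on $\PP^{n-2}$ by the explicit matrix $g\in\GL_{n-1}$ whose first column is the all-ones vector and whose remaining columns are $-e_2,\ldots,-e_{n-1}$. Applying Proposition~\ref{Omega quad change of coordinates} to this $g$ and imposing $(\det g)(g\cdot\Omega) = c\cdot\Omega$ for some $c\in K^\times$ produces a system of linear equations in $A,B,C,D$ whose one-dimensional solution space should be spanned by $(A,B,C,D)\propto (n-2,-2,0,0)$, which on rewriting gives exactly the proposed $\Omega_i = n x_i^2 - 2x_i \sum_j x_j$.

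The main obstacle is verifying that the symmetry constraints truly cut out only a one-dimensional solution space of $\Omega$'s in $V^*\otimes S^2 V$ (rather than a higher-dimensional family), which is a representation-theoretic claim about $V^*\otimes S^2 V$ viewed as a projective $S_n$-representation. If one wishes to avoid that analysis, one can instead appeal to the explicit description of the minimal free resolution due to Wilson~\cite{wilson} that underlies Sections~\ref{structure constants section} and~\ref{wilson description section}: with $\phi_1,\ldots,\phi_{n-2}$ written down combinatorially for the standard set, the brackets of Definition~\ref{def:brackets} become a concrete matrix computation, which simultaneously confirms $\Omega\ne 0$ and determines the overall scalar.
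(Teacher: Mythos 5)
Your approach is genuinely different from the paper's. The paper proves Lemma~\ref{quadric coomputation lemma} by first establishing the key lemma (Lemma~\ref{single bracket points}): using Wilson's explicit model of the minimal free resolution, built by splicing Koszul complexes, it computes every single bracket as $[a_1,\ldots,a_{n-2}]=\pm\lambda(x_b-x_{a_1}-x_{a_{n-2}})x_b$ and then obtains the $\Omega_i$ by summing over cyclic shifts. You propose instead to avoid touching the resolution and to pin down $\Omega$ by the $S_n$-symmetry of the standard configuration. The representation theory you defer does in fact work out: $V$ is the standard representation of $S_n$, the scalar by which a symmetry rescales $\Omega$ is a character of $S_n$, and combining this with the factor $\det g=\mathrm{sign}(g)$ in Proposition~\ref{Omega quad change of coordinates} one finds that $\Omega$ must lie in the trivial or the sign isotypic component of $V^*\otimes S^2V\cong V\otimes S^2V$. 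For $n\geqs 4$ the trivial representation occurs there with multiplicity one (because $V$ occurs exactly once in $S^2V$), the sign representation does not occur at all, and one checks directly that $\sum_i x_i^*\otimes\bigl(nx_i^2-2x_i\sum_j x_j\bigr)$ spans the trivial summand. So the symmetry constraints really do cut out a one-dimensional space, as you hoped.

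The genuine gap is elsewhere: symmetry alone cannot show that $\Omega\neq 0$. Your argument places $\Omega$ in a one-dimensional subspace of $V^*\otimes S^2V$, but the zero element of that subspace satisfies every constraint you impose, and if $\Omega=0$ then Lemma~\ref{quadric coomputation lemma} (and with it Theorem~\ref{algebra constructed from n points}) fails. Since each double bracket is a signed sum of $n-2$ single brackets, nonvanishing of some individual product $\frac{\partial\phi_1}{\partial x_{a_1}}\cdots\frac{\partial\phi_{n-2}}{\partial x_{a_{n-2}}}$ is not enough; one must control signs well enough to rule out cancellation in the sum defining $[[\,\cdots]]$. Every way of doing that seems to require writing the resolution down concretely, which is exactly the fallback you mention and is the content of Section~\ref{wilson description section}. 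So your symmetry argument is an attractive way to guess the answer and to reduce the verification to a single nonvanishing statement, but as written the main route is incomplete, and the fallback you offer to close it is essentially the paper's own proof.
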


We checked this lemma for all $n \leqs 10$ by computer algebra, and
could easily extend this to some larger values of $n$.  In
Sections~\ref{structure constants section} and~\ref{wilson description
  section} we give a proof that is valid for all $n$.

\section{Some symmetries}
\label{sec:sym}
We prove some symmetries satisfied by the square bracket and double
square bracket symbols (see Definition~\ref{def:brackets}).  For
convenience in this section we put $m=n-2$, since all our resolutions
have length $m$.

\begin{lemma} \label{swaplemma} If $2\leqs r \leqs m-2$ then
  \[ [a_1,\ldots,a_{r},a_{r+1},\ldots,a_{m}] =
     -[a_1,\ldots,a_{r+1},a_{r},\ldots,a_{m}]. \]
\end{lemma}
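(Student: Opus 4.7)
The plan is to exploit the fact that $F_{\bullet}$ is a chain complex, so $\phi_r \phi_{r+1} = 0$ as matrices of polynomials, combined with the special structure that for the range $2 \leqs r \leqs m-2$ both $\phi_r$ and $\phi_{r+1}$ are matrices of \emph{linear} forms (by Theorem~\ref{min res theorem}, only $\phi_1$ and $\phi_m$ carry quadratic entries).

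Concretely, I would start from the identity $\phi_r \phi_{r+1} = 0$ and apply the second-order differential operator $\partial^2/\partial x_{a_r} \partial x_{a_{r+1}}$. Because every entry of $\phi_r$ and $\phi_{r+1}$ is linear in the $x_i$, all second partial derivatives of individual entries vanish, and the Leibniz rule collapses to
\[
\frac{\partial \phi_r}{\partial x_{a_r}} \frac{\partial \phi_{r+1}}{\partial x_{a_{r+1}}}
\;+\;
\frac{\partial \phi_r}{\partial x_{a_{r+1}}} \frac{\partial \phi_{r+1}}{\partial x_{a_r}}
\;=\;0.
\]
This gives a clean anticommutation relation between the constant matrices $\partial \phi_r / \partial x_i$ and $\partial \phi_{r+1} / \partial x_j$.

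Substituting this relation into the product defining $[a_1,\ldots,a_m]$ at positions $r$ and $r+1$ immediately produces the factor of $-1$ together with the transposition of $a_r$ and $a_{r+1}$, leaving the remaining factors $\partial \phi_s / \partial x_{a_s}$ for $s \notin \{r,r+1\}$ untouched. This yields exactly the claimed identity.

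There is no real obstacle here: the content is simply Leibniz applied to $\phi_r\phi_{r+1}=0$ under the linearity hypothesis. The only thing to verify carefully is the range of $r$: we need both $\phi_r$ and $\phi_{r+1}$ to be linear so that the mixed second derivatives of their entries vanish, and the inequalities $2 \leqs r \leqs m-2$ (equivalently $3 \leqs r+1 \leqs m-1$) are precisely what guarantee this. The reason Lemma~\ref{swaplemma} excludes $r=1$ and $r=m-1$ is that then one of $\phi_1$ or $\phi_m$ is involved, its entries are quadratic, and the Leibniz computation picks up extra terms—these boundary cases presumably require a separate treatment (probably using the double-bracket symmetrization defined in Definition~\ref{def:brackets}(ii)) handled elsewhere in Section~\ref{sec:sym}.
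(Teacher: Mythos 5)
Your proof is correct and is essentially identical to the paper's own argument: both differentiate the chain-complex relation $\phi_r\phi_{r+1}=0$ with respect to $x_{a_r}$ and $x_{a_{r+1}}$, use that $\phi_r$ and $\phi_{r+1}$ are matrices of linear forms in the stated range so the second-derivative terms vanish, and substitute the resulting anticommutation relation into the product defining $[a_1,\ldots,a_m]$. Your remark about why $r=1$ and $r=m-1$ are excluded also matches the paper's subsequent discussion.
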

\begin{proof} For this range of $r$, both $\phi_r$ and $\phi_{r+1}$
  are matrices of linear forms. We differentiate the relation
  $\phi_r \phi_{r+1}=0$. By the Leibniz rule,
  \[ 0=\frac{\partial^2 (\phi_r \phi_{r+1})}{\partial x_{a_r} \partial
      x_{a_{r+1}}}=\frac{\partial \phi_r}{\partial
      x_{a_r}}\frac{\partial \phi_{r+1}}{\partial
      x_{a_{r+1}}}+\frac{\partial \phi_r}{\partial
      x_{a_{r+1}}}\frac{\partial \phi_{r+1}}{\partial x_{a_r}}, \]
  hence the desired relation.
\end{proof}

\begin{remark} The lemma holds only for $2\leqs r \leqs m-2$. Indeed,
  swapping the first two $a$'s or the last two $a$'s need not simply
  result in a sign change. See Lemma~\ref{single bracket points} for
  an explicit example.  We introduce the symbols
  $[[a_1,a_2,\ldots,a_{m}]]$ to rectify this; see Lemma~\ref{double
    bracket permute} below.
\end{remark} 

\begin{lemma} \label{reverselemma} We have
  $[a_1,a_2,\ldots,a_{m-1},a_{m}]=\pm[a_{m},a_{m-1},\ldots,a_2,a_1]$,
  where the sign is $+1$ if $m \equiv 0,1 \ (\mathrm{mod} \ 4)$ and
  $-1$ if $m \equiv 2,3 \ (\mathrm{mod} \ 4)$.
\end{lemma}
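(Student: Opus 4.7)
The proof plan rests on the Gorenstein self-duality of $F_\bullet$ provided by Theorem~\ref{min res theorem}. Let $G_\bullet$ be the twisted dual resolution with $G_k = \mathrm{Hom}_R(F_{m-k}, R(-n))$ and differentials $\partial_k^G = \phi_{m-k+1}^T$. The Gorenstein hypothesis, combined with the symmetry $b_k = b_{m-k}$ of the Betti numbers, ensures that $G_\bullet$ is again a minimal free resolution of $R/I$ whose graded terms match those of $F_\bullet$ term by term; hence $F_\bullet \cong G_\bullet$ as chain complexes of graded $R$-modules.

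First I would reduce the reversed bracket in $F_\bullet$ to a forward bracket in $G_\bullet$. Since $[a_m,\ldots,a_1]$ is a $1 \times 1$ matrix (the composite map $R(-n) \to R$), it equals its own transpose. Transposition reverses the order of matrix multiplication and sends $\partial \phi_k / \partial x_a$ to $\partial \phi_k^T / \partial x_a = \partial \partial^G_{m-k+1} / \partial x_a$. Assembling these observations yields
\[ [a_m, a_{m-1}, \ldots, a_1]^{F_\bullet} = [a_1, a_2, \ldots, a_m]^{G_\bullet}. \]

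Second, by Remark~\ref{defined up to scalar} the bracket construction depends on the minimal free resolution only up to an overall scalar. Hence there exists $c \in K^\times$, independent of the indices $a_1,\ldots,a_m$, with $[\,\cdot\,]^{G_\bullet} = c \cdot [\,\cdot\,]^{F_\bullet}$; combining this with the identity above gives $[a_m,\ldots,a_1] = c \cdot [a_1,\ldots,a_m]$.

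The main obstacle is pinning down the sign $c$. Normalising the isomorphism $\psi_\bullet \colon F_\bullet \to G_\bullet$ so that $\psi_0 \colon F_0 = R \to G_0 = R$ is the identity, compatibility with differentials forces $\psi_m \colon F_m = R(-n) \to G_m = R(-n)$ to be multiplication by a specific sign, which one identifies with $c$. Iterating the construction of $G_\bullet$ recovers $F_\bullet$ up to the canonical double-duality isomorphism; the extra sign that appears in this comparison measures the parity of reversing a chain complex of length $m$. A careful bookkeeping then yields $c = (-1)^{\binom{m}{2}}$, which equals $+1$ when $m \equiv 0,1 \pmod{4}$ and $-1$ when $m \equiv 2,3 \pmod{4}$, matching the claimed sign. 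As an independent sanity check, the sign can be read off directly from Wilson's explicit description of the resolution for the standard set of $n$ points (Section~\ref{wilson description section}), where the self-duality is transparent.
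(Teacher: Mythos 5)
Your overall strategy is the same as the paper's: invoke the Gorenstein self-duality of $F_\bullet$ from Theorem~\ref{min res theorem}, observe that the bracket is a $1\times 1$ matrix and hence equal to its transpose, and note that transposing reverses the order of the factors and turns the reversed bracket for $F_\bullet$ into the forward bracket for the dual resolution $G_\bullet$; since $G_\bullet$ is again a minimal free resolution of $R/I$, Remark~\ref{defined up to scalar} gives $[\cdot]^{G_\bullet} = c\,[\cdot]^{F_\bullet}$ for a single scalar $c$. All of this is correct and is exactly how the paper's proof is organised.

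The gap is in your determination of $c$. With your (sign-free) convention $\partial_k^G = \phi_{m-k+1}^T$, the double dual of $G_\bullet$ is \emph{literally} $F_\bullet$, with no Koszul signs appearing anywhere; so the argument you sketch --- normalise $\psi_0 = \mathrm{id}$, dualise $\psi$, and compare with $\psi$ using the uniqueness of chain maps lifting the identity --- only yields $c^2 = 1$, i.e.\ $c = \pm 1$. It does not tell you \emph{which} sign occurs, and the phrase ``the parity of reversing a chain complex of length $m$'' is a heuristic rather than a computation: no sign of a reversal permutation enters your setup, because you never introduced Koszul signs into the dual complex. The assertion that $c = +1$ exactly when $m \equiv 0,1 \pmod 4$ is a genuine theorem about the symmetry of the self-duality pairing for Gorenstein ideals of codimension $m$; the paper does not prove it either, but cites \cite[page~123]{bruns1998cohen} for precisely this point. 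Your fallback of reading the sign off Wilson's explicit resolution only verifies the claim for the standard set of points, and transporting it to a general $X$ would require the change-of-coordinates results of Sections~\ref{sec:sym} and~\ref{sec:change}, which themselves depend on this lemma --- so that route is circular as stated. To repair the proof, either cite the reference for the sign or carry out the bookkeeping with an explicit sign convention for the dual complex.
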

\begin{proof}
  The dual of an $R$-module $M$ is $M^* = \operatorname{Hom}_R(M,R)$.
  If an $R$-module map $M \to N$ between free $R$-modules is
  represented by a matrix $\phi$ (with respect to some bases) then the
  dual map $N^* \to M^*$ is represented by the transpose matrix
  $\phi^T$ (with respect to the dual bases).
	    
  We saw in Theorem~\ref{min res theorem} that $X$ is Gorenstein. This
  implies that its minimal free resolution $F_\bullet$ is self-dual.
  Explicitly, there is a commutative diagram
  \[ \xymatrix{ 0 \ar[r] & R \ar[r]^-{\phi_m} \ar[d]_{\pm 1}
    & F_{m-1} \ar[r] \ar[d] & \ldots \ar[r] & F_1 \ar[r]^-{\phi_1} \ar[d]
    & R \ar[r] \ar@{=}[d] & 0 \\ 0 \ar[r] & R \ar[r]^-{\phi_1^T}
    & F_1^* \ar[r] & \ldots \ar[r] & F_{m-1}^* \ar[r]^-{\phi_m^T}
    & R \ar[r] & 0 }\] where the vertical maps are isomorphisms, 
   the right most one is the identity map, and the left most one
   is multiplication by $\pm 1$.  According to
   \cite[page~123]{bruns1998cohen}, the sign is
  $+$ if and only if $m \equiv 0,1 \pmod{4}$.  The lemma now
  follows from the definition of the square brackets notation.
\end{proof}
\begin{lemma} \label{swaplemma2} If the terms indicated by $\ldots$
  are the same in each case then
  \begin{enumerate}
  \item $[a,\ldots,b] + [b,\ldots,a] = 0$, and
  \item
    $[a,b,\ldots,c,d]+[b,a,\ldots,c,d]+[a,b,\ldots,d,c]+[b,a,\ldots,d,c]=0$.
  \end{enumerate}
\end{lemma}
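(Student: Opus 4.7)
For part (i), the plan is to compose the two preceding sign lemmas. By Lemma~\ref{reverselemma}, reversing the entire $m$-tuple gives
\[
[a, a_2, \ldots, a_{m-1}, b] = \epsilon\, [b, a_{m-1}, \ldots, a_2, a],
\]
with $\epsilon = +1$ if $m \equiv 0,1 \pmod{4}$ and $\epsilon = -1$ otherwise. Since positions $2, \ldots, m-1$ lie in the range where Lemma~\ref{swaplemma} applies, I would then reverse the middle $(m-2)$-tuple back to its original order via $\binom{m-2}{2}$ adjacent transpositions, contributing a further sign of $(-1)^{\binom{m-2}{2}}$. A short case check on $m \bmod 4$ confirms that $\epsilon \cdot (-1)^{\binom{m-2}{2}} = -1$ in every case, yielding the desired antisymmetry.

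For part (ii), the first step is to observe that the sum of four brackets factors as
\[
\left( \frac{\partial \phi_1}{\partial x_a} \frac{\partial \phi_2}{\partial x_b} + \frac{\partial \phi_1}{\partial x_b} \frac{\partial \phi_2}{\partial x_a} \right) M \left( \frac{\partial \phi_{m-1}}{\partial x_c} \frac{\partial \phi_m}{\partial x_d} + \frac{\partial \phi_{m-1}}{\partial x_d} \frac{\partial \phi_m}{\partial x_c} \right),
\]
where $M = \frac{\partial \phi_3}{\partial x_{a_3}} \cdots \frac{\partial \phi_{m-2}}{\partial x_{a_{m-2}}}$ (interpreted as the identity when $m = 4$). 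Differentiating the complex relation $\phi_1 \phi_2 = 0$ twice and using that $\phi_2$ is linear (so its second derivatives vanish), the second-order Leibniz rule collapses the first parenthesised factor to $-\frac{\partial^2 \phi_1}{\partial x_a \partial x_b}\, \phi_2$; symmetrically, using $\phi_{m-1} \phi_m = 0$ and $\phi_{m-1}$ linear, the last factor becomes $-\phi_{m-1}\, \frac{\partial^2 \phi_m}{\partial x_c \partial x_d}$. Hence the whole expression reduces to a sandwich of $\phi_2 M \phi_{m-1}$ between two matrices of constants.

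It remains to show $\phi_2 M \phi_{m-1} = 0$, which is the real substance of the argument. I plan to telescope: differentiating each complex identity $\phi_r \phi_{r+1} = 0$ yields
\[
\phi_r \, \frac{\partial \phi_{r+1}}{\partial x_i} = -\frac{\partial \phi_r}{\partial x_i} \, \phi_{r+1},
\]
and applying this $m-3$ times (for $r = 2, 3, \ldots, m-2$ in succession) migrates the leftmost undifferentiated $\phi_2$ through the middle product until one is left with an expression ending in the product $\phi_{m-2} \phi_{m-1} = 0$. The edge case $m=4$ is immediate, since then $M$ is empty and the claim reduces directly to $\phi_2 \phi_3 = 0$. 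The only bookkeeping required is to track the overall sign $(-1)^{m-3}$ accumulated during the telescoping, which does not affect the conclusion.
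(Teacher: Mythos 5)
Your proof is correct and follows essentially the same route as the paper: part (i) by composing Lemma~\ref{reverselemma} with repeated applications of Lemma~\ref{swaplemma} (your sign check $\epsilon\cdot(-1)^{\binom{m-2}{2}}=-1$ is right in all residues mod $4$), and part (ii) by collapsing the end factors via the twice-differentiated relations $\phi_1\phi_2=0$ and $\phi_{m-1}\phi_m=0$ and then telescoping $\phi_2$ rightward until it meets $\phi_{m-2}\phi_{m-1}=0$. The only quibble is a harmless off-by-one: the migration uses the relation for $r=2,\ldots,m-3$ (i.e.\ $m-4$ times), since the final factor is the undifferentiated $\phi_{m-1}$, not a derivative of it.
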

\begin{proof}
  Part (i) follows from Lemmas~\ref{swaplemma}
  and~\ref{reverselemma}. For the second part, as $\phi_1\phi_2=0$ and
  $\phi_{m-1}\phi_{m}=0$, we have
  \[ 0=\frac{\partial^2 (\phi_1 \phi_{2})}{\partial x_{a} \partial
      x_{b}}=\frac{\partial \phi_1}{\partial x_{a}}\frac{\partial
      \phi_{2}}{\partial x_{b}}+\frac{\partial \phi_1}{\partial
      x_{b}}\frac{\partial \phi_{2}}{\partial x_{b}}+\frac{\partial^2
      \phi_1}{\partial x_a \partial x_b}\phi_2, \] and similarly,
  \[ 0=\frac{\partial^2 (\phi_{m-1} \phi_{m})}{\partial x_{c} \partial
      x_{d}}=\frac{\partial \phi_{m-1}}{\partial x_{c}}\frac{\partial
      \phi_{m}}{\partial x_{d}}+\frac{\partial \phi_{m-1}}{\partial
      x_{d}}\frac{\partial \phi_{m}}{\partial
      x_{c}}+\phi_{m-1}\frac{\partial^2 \phi_{m}}{\partial x_c
      \partial x_d}. \] Thus it suffices to show that
  \[ \frac{\partial^2 \phi_1}{\partial x_a \partial x_b} \phi_2
    \frac{\partial \phi_3}{\partial x_{a_3}}\ldots\frac{\partial
      \phi_{m-2}}{\partial x_{a_{m-2}}}\phi_{m-1}\frac{\partial^2
      \phi_{m}}{\partial x_c \partial x_d}=0. \] For
  $2 \leqs r \leqs m-3$ we have
  \[ \phi_r \frac{\partial \phi_{r+1}}{\partial x_{p}}=-\frac{\partial
      \phi_r}{\partial x_{p}}\phi_{r+1}. \] We use this relation to
  move the undifferentiated term to the right until we get an
  expression involving $\phi_{m-2}\phi_{m-1}$, which vanishes.
\end{proof}

As in Definition~\ref{def:brackets}, we let $\sigma=(123 \ldots m) \in S_m$
and define \[ [[a_1, \ldots, a_m ]] = \sum_{r=0}^{m-1} [
  a_{\sigma^{2r}(1)},\ldots, a_{\sigma^{2r}(m)} ]. \]
	
\begin{lemma} \label{double bracket permute} For $\tau \in S_{m}$ we
  have
  $[[a_{\tau(1)},\ldots,a_{\tau(m)}]]=\mathrm{sign}(\tau)[[a_1,\ldots,a_{m}]]$.
\end{lemma}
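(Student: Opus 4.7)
My plan is to prove that $[[\cdot]]$ changes sign under every adjacent transposition $\tau_i=(i,i+1)\in S_m$, which suffices since these generate $S_m$. The crucial first observation is that $[[\cdot]]$ is invariant under the cyclic shift by $\sigma^2$, immediate from reindexing the sum on $k$ in the definition. Consequently, sign change under any transposition $\tau$ propagates via conjugation to every $\sigma^{-2k}\tau\sigma^{2k}$. A short combinatorial check shows that for odd $m$ the $\sigma^2$-conjugacy orbit of $(12)$ already consists of all cyclically adjacent transpositions (which generate $S_m$); for even $m$ a second orbit appears, for which I would take the representative $(23)$.

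Expanding $[[a_{\tau_i(1)},\ldots,a_{\tau_i(m)}]]+[[a_1,\ldots,a_m]]$ bracket by bracket, in the $k$-th term the arguments $a_i$ and $a_{i+1}$ sit at adjacent positions $p_k$ and $p_k+1 \pmod{m}$, where $p_k=\sigma^{-2k}(i)$. Whenever $p_k\in\{2,\ldots,m-2\}$, Lemma~\ref{swaplemma} supplies the required sign change; when $p_k=m$ (wrap-around), Lemma~\ref{swaplemma2}(i) does the same. A parity argument, using that $2k$ is always even, shows that for $\tau=(23)$ in even $m$ these are the only values of $p_k$ that occur, so that case is complete.

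For $\tau=(12)$ in general $m$, the problematic ``boundary'' brackets are those with $p_k=1$ or $p_k=m-1$. After accounting for multiplicity ($1$ for odd $m$, $2$ for even $m$), the residue reduces to the single identity $X_1+X_2+X_3+X_4=0$, where $X_1=[a_1,a_2,a_3,\ldots,a_m]$, $X_2=[a_2,a_1,a_3,\ldots,a_m]$, $X_3=[a_3,a_4,\ldots,a_m,a_1,a_2]$, and $X_4=[a_3,a_4,\ldots,a_m,a_2,a_1]$. I plan to prove this in four moves: apply Lemma~\ref{reverselemma} to $X_3$ and $X_4$, bringing $\{a_1,a_2\}$ to the first two positions; use Lemma~\ref{swaplemma} ($m-4$ applications) to shift $a_m$ from position $3$ to position $m-1$; invoke Lemma~\ref{swaplemma2}(ii) to obtain a four-term relation pairing the shifted brackets against two new ones with $(a_3,a_m)$ at the last two positions; and finally reverse positions $3$ through $m-1$ via Lemma~\ref{swaplemma}, producing $X_1$ and $X_2$.

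The main obstacle is sign bookkeeping in this chain. The accumulated exponent of $-1$ between $X_3+X_4$ and $X_1+X_2$ is $\frac{m(m-1)}{2}+(m-4)+\frac{(m-3)(m-4)}{2}=(m-1)(m-2)$, which is always even, so the overall sign is $+1$ and the four-term relation from Lemma~\ref{swaplemma2}(ii) closes up to give $X_1+X_2+X_3+X_4=0$, as required.
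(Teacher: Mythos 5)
Your proposal is correct and follows essentially the same route as the paper: reduce via $\sigma^2$-invariance to the transpositions $(12)$ and (for even $m$) $(23)$, observe that all but the two ``boundary'' cyclic shifts change sign by Lemmas~\ref{swaplemma} and~\ref{swaplemma2}(i), and close the argument with the four-term identity $X_1+X_2+X_3+X_4=0$ obtained from Lemma~\ref{swaplemma2}(ii). The only difference is cosmetic — you normalise $X_3,X_4$ via Lemma~\ref{reverselemma} before applying Lemma~\ref{swaplemma2}(ii), whereas the paper shuffles $a_3$ rightward — and your sign count $(m-1)(m-2)$ is correct.
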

\begin{proof}
  For $m \leqs 3$ the lemma follows easily from
  Lemma~\ref{swaplemma2}(i), so we may assume $m \geqs 4$.
	    
  We first prove the lemma in the case $\tau=(12)$. We have
  \[ [[a_1,a_2,a_3, \ldots, a_m ]] = [ a_1, a_2, a_3, \ldots, a_m ] +
    [a_3,a_4, \ldots a_m,a_1, a_2 ] + \ldots \] We consider the effect
  of switching $a_1$ and $a_2$ on each term on the right. For the
  terms we have not written out, the answer is that they change sign,
  and indeed this follows from Lemmas~\ref{swaplemma}
  and~\ref{swaplemma2}(i), the latter being used for the term
  $[a_2,a_3, \ldots ,a_m, a_1]$ which only occurs if $n$ is odd.  We
  are left with the two terms we did write out.  We treat them
  together.  By repeatedly using Lemma~\ref{swaplemma} to move $a_3$
  to the right, we have
  \[ [a_2,a_1,a_3,\ldots,a_{m}]=(-1)^{m}[a_2,a_1,a_4,\ldots,a_{m-1},a_3,a_{m}], \]
  and similarly
  \[
    [a_3,a_4,\ldots,a_{m},a_2,a_1]=-[a_1,a_4,\ldots,a_{m},a_2,a_3]
    =(-1)^m[a_1,a_2,a_4,\ldots,a_{m-1},a_{m},a_3], \]
  where the first equality is a consequence
  of~Lemma~\ref{swaplemma2}(i) and the second is a repeated
  application of~Lemma~\ref{swaplemma}. By Lemma~\ref{swaplemma2}(ii)
  we have
  \begin{align*}
    (-1)^{m}([a_2,&a_1,a_4,\ldots,a_{m-1},a_3,a_{m}]
    + [a_1,a_2,a_4,\ldots,a_{m-1},a_{m},a_3]) \\
    & = (-1)^{m+1}([a_1,a_2,a_4,\ldots,a_{m-1},a_3,a_{m}]
    + [a_2,a_1,a_4,\ldots,a_{m-1},a_{m},a_3]).
  \end{align*}
  This is equal, by Lemma~\ref{swaplemma2}(i) and repeated
  application of Lemma~\ref{swaplemma}, to
  \[ -[a_1,a_2,a_3,\ldots,a_{m}]-[a_3,a_4,\ldots,a_m,a_1,a_2]. \] This
  completes the proof in the case $\tau=(12)$.
		
  It is immediate from the definition of $[[ \ldots ]]$
  that the lemma holds with $\tau = \sigma^2$.  If $n$ is odd
  then $\sigma^2$ and $(12)$ generate $S_{m}$ and we are done. If $n$
  is even then $S_{m}$ is generated by $\sigma^2$, $(12)$ and
  $(23)$. So it suffices to prove the lemma for $\tau = (23)$. However
  the proof in this case goes through term by term using
  Lemmas~\ref{swaplemma} and~\ref{swaplemma2}(i).
\end{proof}

\begin{remark}
  If $n$ is odd then using $\sigma$ instead of $\sigma^2$ would make
  no difference in the definition of $[[\ldots]]$. However if $n$ is
  even then this would give an expression that is identically zero, as
  may be seen from Lemma~\ref{double bracket permute}, noting that
  $\sigma$ is an odd permutation.
\end{remark}

\section{Changes of coordinates}
\label{sec:change}
In this section we prove Proposition~\ref{Omega quad change of
  coordinates}.  Let $V = \langle x_1, \ldots, x_{n-1} \rangle$ be the
space of linear forms on $\PP^{n-2}$, and let
$x_1^*, \ldots, x_{n-1}^*$ be the dual basis for $V^*$. We identify
$S^d V$ with the space of degree $d$ homogeneous polynomials
$F$ in $K[x_1, \ldots, x_{n-1}]$. The natural left actions of
$\GL_{n-1}$ on $S^d V$ and on $V^*$ are given by
\begin{equation}
    \label{action_on_polys}
    (g \cdot F)(x_1, \ldots, x_{n-1}) = F(x'_1, \ldots, x'_{n-1}),
\end{equation} where $x'_j=\sum^{m}_{i=1}g_{ij}x_{i}$, and
\begin{equation}
    \label{action_on_dual}
    g \cdot x_j^* = \sum_{i=1}^{n-1} (g^{-T})_{ij} x_i^*. 
\end{equation}

In Section~\ref{sec:state} we defined quadratic forms
$[[\ldots]]$ and $\Omega_j$ associated to a minimal free resolution
with differentials $\phi_r$. Fix any $g \in \GL_{n-1}$.  We now write
$[[\ldots]]'$ and $\Omega_j'$ for the quadratic forms associated to
the minimal free resolution with differentials
$\phi'_r(x_1,\ldots,x_{m})=\phi_r(x'_1,\ldots,x'_m)$, where
$x'_j=\sum^{m}_{i=1}g_{ij}x_{i}$.  We must prove that
\begin{equation}
\label{eqn:change_coords}
\sum_{j=1}^{n-1} x^*_j \otimes \Omega'_j = (\det g)\sum_{j=1}^{n-1}
\left( g \cdot x^*_j\right) \otimes \left(g \cdot \Omega_j \right).
\end{equation}
It suffices to prove this claim for $g$ running over a set of
generators for the group $\GL_{n-1}$, and accordingly we consider $g$
a diagonal matrix, $g$ a permutation matrix, and $g$ a unipotent
matrix.

First we suppose that $g$ is diagonal, say with diagonal entries
$\lambda_1, \ldots, \lambda_{n-1}$.  By the chain rule, we have
\[ [[a_1,a_2,\ldots,a_{n-2}]]'(x_1, \ldots, x_{n-1}) =
  \lambda_{a_1}\cdots\lambda_{a_{n-2}}[[a_1,a_2,\ldots,a_{n-2}]]
  (x'_1, \ldots, x'_{n-1}). \]
Therefore 
\[ \Omega_j'(x_1,\ldots,x_{n-1})=(\det g) \lambda_j^{-1}      
   \Omega_j(x'_1,\ldots,x'_{n-1}), \]
and so~\eqref{eqn:change_coords} follows by~\eqref{action_on_polys}
and~\eqref{action_on_dual}.

We next suppose that $g$ is the permutation matrix corresponding to
the transposition $\tau=(ab)$ for some $1 \leqs a<b \leqs n-1$.  If
$j \notin \{a,b\}$ then by Lemma~\ref{double bracket permute} we have
\[  [[ \tau(1),\tau(2), \ldots, 
       \widehat{\tau(j)}, \ldots, \tau(n-1) ]] 
       = - [[ 1,2, \ldots, \widehat{j}, \ldots, n-1 ]], \] and
\begin{align*}	  
  [[ 1, \ldots, \widehat{a}, \ldots, b-1,a,b+1, \ldots, n-1 ]] 
        &=(-1)^{b-a-1}  [[ 1, \ldots, \widehat{b}, \ldots, n-1 ]], \\
  [[ 1, \ldots, a-1,b,a+1, \ldots, \widehat{b}, \ldots, n-1 ]] 
        &=(-1)^{b-a-1}  [[ 1, \ldots, \widehat{a}, \ldots, n-1 ]].
\end{align*}
Therefore
\[ \Omega'_j(x_1, \ldots,x_{n-1}) = \left\{ \begin{array}{ll}
    -\Omega_j(x'_1, \ldots, x'_{n-1})  & \text{ if } j \notin \{ a,b \}, \\
    -\Omega_b(x'_1, \ldots, x'_{n-1})  & \text{ if } j = a, \\
    -\Omega_a(x'_1, \ldots, x'_{n-1})  & \text{ if } j = b, 
\end{array} \right. \]
and so~\eqref{eqn:change_coords} follows by~\eqref{action_on_polys}
and~\eqref{action_on_dual}.

Finally, we consider the case $g=I+tE_{21}$, where $E_{ij}$ is the
$(n-1) \times (n-1)$ matrix with a $1$ in the $(i,j)$-place, and all
other entries $0$.  The matrices $\phi_r'$ are given by
\[ \phi'_r(x_1,\ldots,x_{n-1})=\phi_r(x_1+tx_2,x_2,\ldots,x_{n-1}). \]
So by the chain rule
\[  \frac{\partial \phi'_r}{\partial x_j}(x_1,\ldots,x_{n-1})
  = \left(\frac{\partial \phi_r}{\partial x_j}
  + \delta_{j2} t\frac{\partial \phi_r}{\partial x_1}\right)
  (x_1+tx_2,x_2,\ldots,x_{n-1}). \]
Therefore, if $2 \not\in \{a_1,\ldots,a_{n-2}\}$ then
\[ [[a_1,\ldots,a_{n-2}]]'(x_1,\ldots,x_{n-1})
  = [[a_1,\ldots,a_{n-2}]](x_1',\ldots,x_{n-1}'), \]
whereas if $a_k=2$ then
\begin{align*}
  [[a_1,\ldots& ,a_{n-2}]]'(x_1,\ldots,x_{n-1}) \\
  & = [[a_1,\ldots,a_{n-2}]](x_1',\ldots,x_{n-1}')
  + t[[a_1,\ldots,a_{k-1},1,a_{k+1},\ldots,a_{n-2}]](x_1',\ldots,x_{n-1}').
\end{align*}
We note that if $1 \in \{a_1,\ldots,a_{n-2}\}$ then the final term
$[[a_1,\ldots,a_{k-1},1,a_{k+1},\ldots,a_{n-2}]]$ vanishes by
Lemma~\ref{double bracket permute}, since 1 appears twice.  Then by
definition of the $\Omega_j$ we have
\[\Omega'_j(x_1,\ldots,x_{n-1})=\Omega_j(x'_1,\ldots,x'_{n-1})
    - \delta_{j1} t \Omega_2(x'_1,\ldots,x'_{n-1}), \]
 and hence
 \begin{equation*} \label{eq aim} \sum_{j=1}^{n-1} x^*_j \otimes
   \Omega'_j(x_1,\ldots,x_{n-1}) = \sum_{j=1}^{n-1} \left( x^*_j -
     \delta_{j2} t x^*_1 \right) \otimes
   \Omega_j(x'_1,\ldots,x'_{n-1}).
 \end{equation*}
 Now~\eqref{eqn:change_coords} follows by~\eqref{action_on_polys}
 and~\eqref{action_on_dual}. This completes the proof of
 Proposition~\ref{Omega quad change of coordinates}.

\section{Reduction to the key lemma}
\label{structure constants section}
	
In Section~\ref{sec:overview} we reduced the proof of
Theorem~\ref{algebra constructed from n points} to a verification in
the case of the standard set of $n$ points:
$P_1=(1:0\ldots:0),
P_2=(0:1:0\ldots:0),\ldots,P_{n-1}=(0:\ldots:0:1)$ and
$P_{n}=(1:1:\ldots:1)$.  This verification depends on the following
``key lemma'', the proof of which we postpone to the next section.

\begin{lemma}
\label{single bracket points}
Let $X \subset \PP^{n-2}$ be the standard set of $n$ points in general
position. Fix a choice of minimal free resolution for $X$.  Then there
exists a scalar $\lambda \in K$ such that whenever $a_1,\ldots,a_{n-2},b$ is
a permutation of $1,2,\ldots,n-1$ we have
\[  [a_1,\ldots,a_{n-2}]= \pm \lambda (x_{b}-x_{a_1}-x_{a_{n-2}})x_{b}, \]
where $\pm$ is the sign of the permutation, and the symbol $[\cdots]$
was defined in Section~\ref{sec:state}.
\end{lemma}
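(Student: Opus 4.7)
The plan is to combine symmetry considerations with a direct computation based on the explicit minimal free resolution of the standard configuration due to Wilson~\cite{wilson}, which will be recalled in Section~\ref{wilson description section}.

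As a first step, I would use the $S_{n-1}$-symmetry of the standard configuration to reduce to a single canonical choice of indices. Permuting the coordinates $x_1,\ldots,x_{n-1}$ by any $\tau\in S_{n-1}$ permutes the first $n-1$ standard points and fixes $P_n$, leaving $X$ unchanged as a scheme. By Proposition~\ref{Omega quad change of coordinates} and the calculations in Section~\ref{sec:change} specialised to permutation matrices, such a $\tau$ sends the bracket $[a_1,\ldots,a_{n-2}]$ to $\mathrm{sign}(\tau)\,[\tau(a_1),\ldots,\tau(a_{n-2})]$, up to an overall sign from rescaling the rightmost module $R(-n)$ of the resolution which can be absorbed into $\lambda$. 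It therefore suffices to establish, for the single case $(a_1,\ldots,a_{n-2},b)=(1,2,\ldots,n-1)$, the identity
\[
[1,2,\ldots,n-2] \;=\; \lambda\,(x_{n-1}-x_1-x_{n-2})\,x_{n-1}
\]
for some $\lambda\in K$; the general statement, including the sign of the permutation, then follows.

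Next, using Wilson's construction I would write the differentials $\phi_1,\ldots,\phi_{n-2}$ for the standard points in concrete form. By Theorem~\ref{min res theorem}, $\phi_1$ is a row of quadratic generators of $I(X)$ and $\phi_{n-2}$ is essentially its transpose (via the Gorenstein duality underlying Lemma~\ref{reverselemma}), while the interior maps $\phi_r$ for $2\leqs r\leqs n-3$ are matrices of linear forms whose entries Wilson prescribes combinatorially. Consequently $\partial\phi_1/\partial x_1$ is a row of linear forms, $\partial\phi_{n-2}/\partial x_{n-2}$ is a column of linear forms, and each interior $\partial\phi_r/\partial x_r$ is a matrix of scalars. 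The bracket $[1,2,\ldots,n-2]$ is then the iterated product of these derivative matrices.

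The heart of the proof will be the direct evaluation of this iterated product in Wilson's explicit setup. I expect the scalar interior factors to collapse onto a rank-one object distinguished by the missing index $n-1$, which would both explain the overall factor $x_{n-1}$ (emerging from $\partial\phi_{n-2}/\partial x_{n-2}$ paired with the collapsed middle) and produce the second factor $x_{n-1}-x_1-x_{n-2}$ (from $\partial\phi_1/\partial x_1$ paired the same way). The main obstacle will be tracking Wilson's combinatorial bookkeeping carefully enough to witness this rank-one collapse without sign or indexing errors; the presence of both the quadratic boundary maps and the linear interior maps means there is little room to appeal to abstract reasoning. Once the matrix identity is verified in the canonical case, the full statement of Lemma~\ref{single bracket points} follows from the $S_{n-1}$-reduction together with the symmetries already catalogued in Section~\ref{sec:sym}.
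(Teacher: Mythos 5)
Your proposal has two genuine gaps, one in the symmetry reduction and one in the core computation. For the reduction: pulling back the resolution along a coordinate permutation $\tau\in S_{n-1}$ does give a second minimal free resolution of the same ideal, and the resulting chain isomorphism is unique (all homotopies vanish for degree reasons), so it induces a well-defined character $\mu\colon S_{n-1}\to K^\times$ via its action on the last module $R(-n)$, and $[\tau(a_1),\ldots,\tau(a_{n-2})]$ differs from the pulled-back bracket by the factor $\mu(\tau)$. But $\mu(\tau)$ depends on $\tau$ and therefore \emph{cannot} be absorbed into the single constant $\lambda$: a character of $S_{n-1}$ is either trivial or the sign character, and the $\pm$ in the lemma is true precisely because $\mu=\mathrm{sign}$. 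If $\mu$ were trivial the brackets would be symmetric under permutation and the lemma false. Deciding which character occurs requires tracking how the generator of $\operatorname{im}(\phi_{n-2})$ transforms under a transposition, i.e., it already requires the explicit resolution; you cannot get it from Proposition~\ref{Omega quad change of coordinates} or Section~\ref{sec:change}, whose permutation computations concern the double brackets $[[\cdots]]$ (via Lemma~\ref{double bracket permute}) and say nothing about single brackets --- indeed the paper explicitly warns that single brackets are not sign-equivariant under permuting their arguments.

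Second, the actual evaluation of $[1,2,\ldots,n-2]$ is only stated as an expectation (``I expect the scalar interior factors to collapse onto a rank-one object''), and this collapse is the entire content of the lemma. The paper realises it by splicing the Koszul complexes $K^J_\bullet$ of the subideals $I^J=(x_i(x_j-x_k))$ into $F_\bullet$, establishing the relations of Lemma~\ref{lem:Koszul relns} among the images $(i_1\wedge\cdots\wedge i_m)\otimes(j,k)$, exhibiting the last syzygy $t=\sum_{j<k}x_jx_k\,(i_1\wedge\cdots\wedge i_{n-3})\otimes(j,k)$ explicitly, and then observing that after applying $\partial\phi_{n-3}/\partial x_{a_{n-3}}\circ\cdots$ only the two terms with $j=a_1$ and $j=b$ survive, yielding $\pm x_b(x_b-x_{a_1}-x_{a_{n-2}})$ for an \emph{arbitrary} tuple $a_1,\ldots,a_{n-2},b$ (so no symmetry reduction is needed). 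Your outline correctly identifies the right tools --- Wilson's description, the shape of the derivative matrices, the expected rank-one collapse --- but until the collapse is actually exhibited and the sign character pinned down, the proof is not complete.
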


We use this lemma to compute the quadratic forms
$\Omega_1, \ldots, \Omega_{n-1}$ associated to $X$.

\medskip

\noindent
{\em Proof of Lemma~\ref{quadric coomputation lemma}}.
Recall that we wrote $\sigma$ for the $(n-2)$-cycle in $S_{n-2}$ with
$\sigma(1)=2, \sigma(2) =3, \ldots, \sigma(n-2) = 1$. Let
$a_1, \ldots, a_{n-2},b$ be as in the statement of Lemma~\ref{single
  bracket points}. By the definition of $[[\cdots]]$,
and the fact $\sigma^2$ is an even permutation, we compute
\begin{align*}
[[a_{1},a_{2},\ldots,a_{n-2}]]&=
\sum_{k=1}^{n-2} [a_{\sigma^{2k}(1)},a_{\sigma^{2k}(2)},\ldots,a_{\sigma^{2k}(n-2)}]\\&=
\lambda \sum_{k=1}^{n-2}   (x_b - x_{a_{\sigma^{2k}(1)}} - x_{a_{\sigma^{2k}(n-2)}})x_{b}\\
&= \lambda \left((n-2)x_b^2 - 2 x_b \sum_{j \ne b} x_j\right) \\
&= \lambda \left( n x_b^2-2x_b \sum^{n-1}_{j=1} x_j \right).   
\end{align*}
It follows by the definition of the $\Omega_i$ and Lemma~\ref{double
  bracket permute} that
\[ \Omega_i=(-1)^{i}[[1,\ldots,\widehat{i},\ldots,n-1]] = (-1)^{n-1}
  \lambda \left(n x_i^2-2x_i \sum^{n-1}_{j=1}x_j \right). \] Since we
are only computing the $\Omega_i$ up to an overall scalar, the factor
$(-1)^{n-1}\lambda$ may be ignored.  This completes the proof of
Lemma~\ref{quadric coomputation lemma}.  \qed

\section{Proof of the key lemma (=Lemma~\ref{single bracket points})}
\label{wilson description section}

As before, let $X \subset \PP^{n-2}$ be the standard set of $n$ points
in general position.  In this section we prove Lemma~\ref{single
  bracket points}.  Our approach is inspired by an explicit
description of the minimal free resolution of the set $X$, due to
Wilson \cite[Chapter~5]{wilson}.  In \cite[Section~4.5]{LazarThesis}
we gave a different proof of Lemma~\ref{single bracket points}, based
on the method of unprojection.

\begin{lemma} \label{ideal set pts} If $n \geqs 4$ then the ideal
  $I:=I(X) \subset K[x_1,\ldots,x_{n-1}]=R$ is generated by the
  quadratic forms $x_i(x_j-x_k)$ for $i,j,k \in \{1,2, \ldots,n-1\}$
  distinct. If $n=3$ then $I$ is generated by $x_1x_2(x_1-x_2)$.
\end{lemma}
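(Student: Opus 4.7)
I would handle the case $n=3$ separately, as it is essentially trivial: any homogeneous ideal of three distinct points in $\PP^1$ is principal, generated by the unique (up to scalar) cubic form vanishing on them, and $x_1 x_2 (x_1 - x_2)$ is such a cubic for the three given points.

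For $n \geqs 4$, let $J \subset R$ denote the ideal generated by the forms $x_i(x_j - x_k)$. The inclusion $J \subseteq I$ is a direct evaluation check: at $P_\ell$ with $\ell < n$ the coordinate $x_i$ vanishes unless $i = \ell$, in which case $j, k \neq \ell$ forces $x_j = x_k = 0$ at $P_\ell$; and at $P_n$ all coordinates agree. The plan is then to show the induced surjection $R/J \twoheadrightarrow R/I$ is an isomorphism by comparing Hilbert functions degree by degree.

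For the lower bound, I would verify $\dim (R/I)_d = n$ for $d \geqs 2$ by checking that the evaluation map $R_d \to K^n$, $f \mapsto (f(P_1), \ldots, f(P_n))$, is surjective: the $n-1$ pure powers $x_i^d$ give the vectors $e_i + e_n$, while any monomial with two distinct variables (such as $x_1^{d-1} x_2$) gives $e_n$, and together these span $K^n$. For the matching upper bound $\dim (R/J)_d \leqs n$, the core observation is that in $R/J$ the defining relations immediately yield $x_i x_j = x_i x_k$ whenever $i,j,k$ are distinct; chaining these identities (which requires $n - 1 \geqs 3$) together with commutativity shows that all $x_i x_j$ with $i \neq j$ equal a single element $u \in (R/J)_2$. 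A second application of the same idea, using an index $\ell \in \{1,\ldots,n-1\} \setminus \{i,j\}$ (available since $n \geqs 4$) and any $k \neq \ell$, yields
\[ u \cdot (x_i - x_j) = x_k \cdot x_\ell \,(x_i - x_j) \in J, \]
so $u \cdot x_i$ is independent of $i$. Iterating, there is a single element $w_d \in (R/J)_d$ with $u \cdot m = w_d$ for every monomial $m$ of degree $d - 2$. Each degree-$d$ monomial is either a pure power $x_i^d$ or contains two distinct variables (hence factors as $u$ times a monomial of degree $d-2$ and equals $w_d$), so $(R/J)_d$ is spanned by the $n$ elements $x_1^d, \ldots, x_{n-1}^d, w_d$.

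Combining the two bounds with the trivial equalities $(R/J)_0 = (R/I)_0$ and $(R/J)_1 = (R/I)_1$ (since $J$ has no generators in degree $\leqs 1$) yields $J = I$ for $n \geqs 4$. The main obstacle is organising the degree-by-degree reduction for the upper bound cleanly; the hypothesis $n \geqs 4$ is used in exactly two places, namely to chain the identifications of off-diagonal quadratic products and, a step later, to pick an auxiliary index $\ell \notin \{i,j\}$ that lets us identify $u \cdot x_i$ with $u \cdot x_j$.
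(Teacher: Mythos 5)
Your argument is correct, but it takes a genuinely different route from the paper. The paper disposes of this lemma by citing Wilson's thesis and observing that, since Theorem~\ref{min res theorem} already guarantees $I$ is generated by quadrics, one only has to verify the single linear-algebra fact that the forms $x_i(x_j-x_k)$ span $I_2$ (a space of dimension $\binom{n}{2}-n$). You instead give a self-contained proof that never invokes the structure theorem: after the easy inclusion $J\subseteq I$, you compute $\dim(R/I)_d=n$ for $d\geqs 2$ via surjectivity of the evaluation map, and bound $\dim(R/J)_d\leqs n$ by showing that in $R/J$ all off-diagonal quadratic monomials collapse to one class $u$ and that $u\,x_i$ is independent of $i$, so $(R/J)_d$ is spanned by $x_1^d,\ldots,x_{n-1}^d,w_d$. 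This is longer but has the virtue of being independent of the Gorenstein/minimal-free-resolution input, and it isolates exactly where $n\geqs 4$ enters (the chaining of the relations and the choice of the auxiliary index $\ell$). Two small points: in degree $1$ you also need $I_1=0$, i.e.\ that no linear form vanishes on all $n$ points --- immediate here since the coordinate points already span $\PP^{n-2}$, but your parenthetical only addresses $J_1=0$; and your $n=3$ case matches the paper's (the paper simply calls it ``obvious'').
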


\begin{proof}
  The case $n=3$ is obvious. The case $n \geqs 4$ is
  \cite[Lemma~146]{wilson}. The proof is a simple computation, since
  by Theorem~\ref{min res theorem} we already know that $I$ is
  generated by quadratic forms.
\end{proof}

For the rest of this section we assume that $n \geqs 5$, since the
somewhat degenerate cases $n=3,4$ are easy to handle by a direct
computation.

We fix a minimal graded free resolution $(F_{\bu},\phi)$ of
$I$.  The idea is to describe $F_{\bu}$ by splicing together Koszul
complexes. For each pair $J=(j,k)$, with
$j,k \in \{1,2, \ldots, n-1\}$ distinct, consider the ideal
$I^{J} \subset I$ generated by the set of quadratic forms
\[\{x_i(x_j-x_k) : i = 1,2, \ldots, \widehat{j}, \ldots, \widehat{k},
  \ldots, n- 1 \}.
\] 
As a graded $K[x_1,\ldots,x_{n-1}]$-module, $I^{J}$ is isomorphic to
the ideal generated by the linear forms
$x_1, \ldots, \widehat{x}_j, \ldots, \widehat{x}_k, \ldots, x_{n-1}$,
and so is resolved by a Koszul complex. We write this complex as
\begin{align*}	
  K^{J}_{\bu}: \quad 0 \ra \wedge^{n-3} E^J 
  \xrightarrow{d_{n-3}} \wedge^{n-4} E^J \ra 
  \ldots \ra \wedge^2 E^J \stackrel{d_2}{\ra} E^J \stackrel{d_1}{\ra} R.
\end{align*}
where $E^J$ is a free $R$-module of rank $n-3$ with basis
$e_1, \ldots, \widehat{e}_j, \ldots, \widehat{e}_k, \ldots, e_{n-1}$,
and the differentials $d_m$ are given for $m > 1$ by
\begin{equation}
\label{koszul diff}
d_m(e_{i_1} \wedge \ldots \wedge e_{i_m})
  = \sum^{m}_{\ell=1} (-1)^\ell x_{i_\ell} \cdot
  (e_{i_1} \wedge \ldots \wedge \widehat{e}_{i_\ell} \wedge \ldots \wedge e_{i_m})
\end{equation}
and for $m=1$ by $d_1(e_i)=x_i(x_j-x_k)$.

The inclusion $I^{J} \subset I$ induces a map of chain complexes
$K^{J}_{\bu} \xrightarrow{} F_{\bu}$, i.e. a commutative diagram
\[ \xymatrix{ & 0 \ar[r] & \wedge^{n-3} E^J \ar[r] \ar[d] & \ldots \ar[r]
& \wedge^2 E_J \ar[r]^-{d_2} \ar[d] & E^J \ar[r]^{d_1} \ar[d] & R \ar@{=}[d] \\
0 \ar[r] & F_{n-2} \ar[r]^-{\phi_{n-2}} & F_{n-3} \ar[r] & \ldots
\ar[r] & F_2 \ar[r]^{\phi_2} & F_1 \ar[r]^{\phi_1} & R } \]
With notation as in Section~\ref{sec:state}, we may equally write this as
\[ \xymatrix{ & 0 \ar[r] & R(-n+2) \ar[r] \ar[d] & \ldots \ar[r]
   & R(-3)^{a_2} \ar[r]^-{d_2} \ar[d] & R(-2)^{a_1} \ar[r]^-{d_1}
   \ar[d] & R \ar@{=}[d] \\  0 \ar[r] & R(-n) \ar[r]^-{\phi_{n-2}} &
   R(-n+2)^{b_{n-3}} \ar[r] & \ldots \ar[r] & R(-3)^{b_2}
   \ar[r]^-{\phi_2} & R(-2)^{b_1} \ar[r]^-{\phi_1} & R } \] where
$a_i = \binom{n-3}{i}$ and the $b_i$ are specified in
Theorem~\ref{min res theorem}.  In particular, all the differentials
$d_i$ and $\phi_i$ are represented by matrices of linear forms, except
for $d_1, \phi_1$ and $\phi_{n-2}$ which are represented by matrices
of quadratic forms.  The map of chain complexes
$K^{J}_{\bu} \xrightarrow{} F_{\bu}$, which by construction is unique
up to chain homotopy, is actually uniquely determined. This is because
any such chain homotopy respects the grading of the modules in the
resolutions, and hence must be zero.

We denote the image of $e_{i_1} \wedge \ldots \wedge e_{i_m}$ in
$F_{m}$ by the symbol $(i_1 \wedge \ldots \wedge i_m) \otimes (j,k)$.
It follows from equation~\eqref{koszul diff} that for
$2 \leqs m \leqs n-3$ we have
\begin{equation} \label{diff eq}
\phi_m((i_1 \wedge \ldots \wedge i_m) \otimes  (j,k))=\sum^{m}_{\ell=1} (-1)^\ell x_{i_\ell} \cdot (i_1 \wedge \ldots \wedge \widehat{i}_\ell \wedge \ldots  \wedge i_m) \otimes  (j,k). 
\end{equation}

\begin{lemma} \label{lem:Koszul relns}
\begin{enumerate} 
\item For any $i_1, \ldots, i_m,j,k \in \{1,2, \ldots, n-1\}$ distinct we have
\[ (i_1 \wedge \cdots \wedge i_m) \otimes (j,k) 
 + (i_1 \wedge \cdots \wedge i_m) \otimes (k,j) = 0. \] 
\item For any $i_1, \ldots, i_m,j,k,\ell \in \{1,2, \ldots, n-1\}$
  distinct we have
\[ (i_1 \wedge \cdots \wedge i_m) \otimes (j,k) 
 + (i_1 \wedge \cdots \wedge i_m) \otimes (k,\ell) 
 + (i_1 \wedge \cdots \wedge i_m) \otimes (\ell,j) = 0. \] 
 \item The individual expressions $(i_1 \wedge \cdots \wedge i_m) \otimes (j,k)$
 are non-zero.
\end{enumerate}
\end{lemma}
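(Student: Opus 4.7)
My approach for all three parts leverages the uniqueness assertion made just before the lemma (a chain map between minimal graded free resolutions lifting a prescribed map on $H_0$ is uniquely determined, not merely up to homotopy), combined with the following minimality/degree observation. For $1\leqs m\leqs n-3$, the module $F_m$ is generated in degree $m+1$ while $F_{m+1}$ is generated in degree $m+2$ (or in degree $n$ when $m+1=n-2$); in either case $(F_{m+1})_{m+1}=0$, so the image of $\phi_{m+1}$ has no component in degree $m+1$, and hence $(\ker\phi_m)_{m+1}=0$.

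For (i), note that $I^{(j,k)}=I^{(k,j)}$, so $K^{(j,k)}_\bu$ and $K^{(k,j)}_\bu$ are both minimal free resolutions of $R/I^{(j,k)}$. Define $\psi\colon K^{(k,j)}_\bu\to K^{(j,k)}_\bu$ by $\psi_0=\mathrm{id}_R$ and $\psi_m=-\mathrm{id}$ on $\wedge^m E$ for $m\geqs 1$. This is a chain isomorphism: the only nontrivial compatibility is at $d_1$, where the sign difference between $d_1^{(j,k)}$ and $d_1^{(k,j)}$ is exactly compensated by $\psi_1=-\mathrm{id}$, while for $m\geqs 2$ the Koszul differentials agree. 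Since $\psi_0=\mathrm{id}$, the map $\psi$ induces the identity on $H_0$, and uniqueness of the chain-map lift forces the canonical chain map $K^{(k,j)}_\bu\to F_\bu$ to equal the composition through $\psi$. Evaluating on $e_{i_1}\wedge\cdots\wedge e_{i_m}$ yields the sign change in (i).

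For (ii), I would induct on $m$, letting $\alpha\in F_m$ denote the three-term sum. For $m=1$ the computation is direct: $\phi_1(\alpha)=x_i[(x_j-x_k)+(x_k-x_\ell)+(x_\ell-x_j)]=0$ in $F_0=R$. For $m\geqs 2$, equation~\eqref{diff eq} writes $\phi_m(\alpha)$ as $\sum_\ell(-1)^\ell x_{i_\ell}$ times an analogous three-term sum in $F_{m-1}$, which vanishes by the inductive hypothesis. In either case $\phi_m(\alpha)=0$, and since $\alpha$ lies in degree $m+1$, the degree observation above gives $\alpha=0$.

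For (iii), I would show by induction on $m$ that the chain map $K^{(j,k)}_m\to F_m$, viewed as a matrix $M_m$, has linearly independent columns. Both free modules have all generators in degree $m+1$, so $M_m$ is a matrix of constants, and injectivity on the minimal graded piece is equivalent to injectivity on every graded piece. For $m=1$, a kernel element $\kappa=\sum c_i e_i$ would satisfy $0=\phi_1(M_1(\kappa))=d_1^{(j,k)}(\kappa)=(x_j-x_k)\sum c_i x_i$ in $R$, forcing every $c_i=0$. For the inductive step, if $\kappa\in(K^{(j,k)}_m)_{m+1}$ lies in $\ker M_m$, then $M_{m-1}(d_m^{(j,k)}(\kappa))=\phi_m(M_m(\kappa))=0$; injectivity of $M_{m-1}$ on all graded pieces (by the inductive hypothesis and the scalar-matrix argument) then gives $d_m^{(j,k)}(\kappa)=0$, and minimality of the Koszul complex—namely, $\mathrm{image}(d_{m+1}^{(j,k)})\subset\mathfrak m\cdot K^{(j,k)}_m$ has no elements of degree $m+1$—forces $\kappa=0$. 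The most delicate point is keeping the graded degrees consistent across the three inductive arguments, in particular the fact that a chain map between complexes with matching generation degree is encoded by a scalar matrix whose injectivity is degree-independent.
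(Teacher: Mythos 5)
Your argument is correct, and it overlaps with the paper's proof only in part (ii), where both arguments are identical: reduce to the injectivity of $\phi_m$ on the degree-$(m+1)$ piece (which you justify correctly via $(F_{m+1})_{m+1}=0$ and exactness) and then induct on $m$ using \eqref{diff eq}, with the base case being the two-line computation in $R$. For part (i) the paper runs the \emph{same} induction, with base case $x_i(x_j-x_k)+x_i(x_k-x_j)=0$; you instead exploit $I^{(j,k)}=I^{(k,j)}$, the global sign automorphism $\psi$ of the Koszul complex, and the uniqueness (not merely up to homotopy) of the lifted chain map established just before the lemma. Your route is heavier but more conceptual: it explains that the antisymmetry of the symbol in $(j,k)$ is forced by the two Koszul complexes differing by an overall sign, rather than being a degree-by-degree coincidence; the paper's route is shorter and uniform with (ii). For part (iii), where the paper only says ``a similar, but easier, induction,'' you prove the stronger statement that each chain map $K^J_m\to F_m$ is injective (linearly independent columns of the constant matrix $M_m$), using the commutativity $\phi_m\circ M_m=M_{m-1}\circ d_m$, the induction hypothesis, and the fact that $\ker d_m$ has no elements in degree $m+1$ (including the boundary case $m=n-3$, where $d_{n-3}$ is itself injective). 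This is a valid and complete filling-in of the omitted details, and the extra strength (linear independence, not just non-vanishing) costs nothing. The degree bookkeeping you flag as delicate is handled correctly throughout.
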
 

\begin{proof} (i) and (ii). The left hand side of each equation 
has degree $m+1$ in $F_m \isom R(-m-1)^{b_m}$. 
Since $\phi_m$ in injective in this degree, it suffices to check 
the image under $\phi_m$ is zero. The proof is now by induction on $m$. 
If $m=1$ then 
\begin{align*}
\phi_1 (i \otimes(j,k)+i \otimes(k,j)) &= x_i(x_j - x_k) + x_i(x_k - x_j) = 0, \\
\phi_1 (i \otimes(j,k) + i \otimes(k,\ell) + i \otimes(\ell,j)) 
        & = x_i(x_j - x_k) + x_i(x_k - x_\ell)+ x_i(x_\ell - x_j) = 0. 
\end{align*}
If $m > 1$ then we instead use~\eqref{diff eq} to give a linear combination
of $x_1, \ldots, x_{n-1}$ where each coefficient vanishes by the 
induction hypothesis. \\
(iii) This is proved by a similar, but easier, induction.
\end{proof}

We now give a formula for the differential
$\phi_{n-2} : F_{n-2} \xrightarrow{} F_{n-3}$.
\begin{lemma}
  The image of $\phi_{n-2}$ is generated as an $R$-module by
  \begin{equation} \label{final syzygy eq} t:=\sum_{j<k} x_{j}x_{k}
    \cdot (i_1 \wedge \ldots \wedge i_{n-3}) \otimes (j,k),
  \end{equation}
  where for each $j < k$ we pick $i_1,\ldots,i_{n-3},j,k$ an even
  permutation of $1,2, \ldots, n-1$.
\end{lemma}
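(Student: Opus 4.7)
The plan is to show $t \in \mathrm{image}(\phi_{n-2})$ and that $t \ne 0$; a dimension count will then finish the proof. Since $F_{n-2} = R(-n)$ has rank one and $\phi_{n-2}$ is injective (as the last map of a resolution), the image of $\phi_{n-2}$ is a rank one free $R$-module whose degree $n$ part is one-dimensional over $K$, so any nonzero element of the image in that degree is automatically a generator.

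For well-definedness of $t$, I would observe that $e_{i_1}\wedge\cdots\wedge e_{i_{n-3}}$ is unchanged by an even permutation of the $i$'s, and our convention fixes the overall parity. Writing $\tau_{jk}=(i_1\wedge\cdots\wedge i_{n-3})\otimes(j,k)$, which lies in the degree $n-2$ part of $F_{n-3}$ and hence in a $K$-vector space, we have $t=\sum_{j<k}x_jx_k\tau_{jk}$. Since the $x_jx_k$ are distinct monomials, $t=0$ would force every $\tau_{jk}$ to vanish, contradicting Lemma~\ref{lem:Koszul relns}(iii); hence $t \ne 0$.

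The core step is to verify $\phi_{n-3}(t)=0$. Applying~\eqref{diff eq} termwise gives
\[ \phi_{n-3}(t) = \sum_{j<k}\sum_{\ell=1}^{n-3}(-1)^\ell x_jx_kx_{i_\ell}\cdot
   (i_1\wedge\cdots\widehat{i}_\ell\cdots\wedge i_{n-3})\otimes(j,k), \]
and I would regroup this double sum by the unordered triple $\{p,q,r\} = \{j,k,i_\ell\}$ with $p<q<r$. For each such triple exactly three summands contribute, one for each choice of $(j,k) \in \{(p,q),(p,r),(q,r)\}$ with $i_\ell$ the remaining element. Choosing the even permutations so that, up to a sign absorbed into an overall factor, the surviving $(n-4)$-wedge after removing $i_\ell$ is the same canonical element $w$ in all three contributions, the triple's contribution becomes
\[ x_px_qx_r \cdot w \otimes \big[\, \epsilon_1 (p,q) + \epsilon_2 (p,r) + \epsilon_3 (q,r) \,\big] \]
for specific signs $\epsilon_i \in \{\pm 1\}$ arising from the $(-1)^\ell$ in~\eqref{diff eq}, the parity of each chosen permutation, and the position of $i_\ell$ within it. A direct check of the signs shows the bracket matches (up to flipping individual pairs using Lemma~\ref{lem:Koszul relns}(i)) the cyclic shape appearing in Lemma~\ref{lem:Koszul relns}(ii), and therefore vanishes.

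The hard part will be the sign bookkeeping in this last step, where three independent sources of sign must be lined up consistently; this is routine but fiddly. Once $\phi_{n-3}(t) = 0$ is in hand, exactness of $F_\bullet$ at $F_{n-3}$ will place $t$ in $\mathrm{image}(\phi_{n-2})$, and the dimension argument from the first paragraph will identify $t$ as a generator.
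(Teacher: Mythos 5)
Your proposal is correct and takes essentially the same approach as the paper: both reduce the claim to showing $t$ is a nonzero degree-$n$ element of $\ker\phi_{n-3}$ (using $F_{n-2}\cong R(-n)$, injectivity and exactness), and both verify $\phi_{n-3}(t)=0$ by extracting the coefficient of each cubic monomial $x_px_qx_r$ and recognising the cyclic relation of Lemma~\ref{lem:Koszul relns}(ii). The paper simply carries out the sign check you defer for the representative monomial $x_1x_2x_3$, obtaining $-(4\wedge\cdots\wedge(n-1))\otimes(1,2)-(4\wedge\cdots\wedge(n-1))\otimes(2,3)-(4\wedge\cdots\wedge(n-1))\otimes(3,1)$, and notes the other coefficients are handled identically.
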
 

\begin{proof}
  We first note that $t$ has degree $n$ in
  $F_{n-3} \isom R(-n+2)^{b_{n-3}}$ and is non-zero by
  Lemma~\ref{lem:Koszul relns}(iii).  Since $F_{n-2} \cong R(-n)$ and
  $F_{\bu}$ is exact, it suffices to show that $t$ belongs to the
  kernel of $\phi_{n-3}$.  We find using \eqref{diff eq} that the
  coefficient of $x_1x_2x_3$ in $\phi_{n-3}(t)$ is
  \begin{align*}
    -(4 \wedge \ldots \wedge (n-1)) \otimes (1,2)
    - (4\wedge\ldots \wedge (n-1)) \otimes (2,3)
    - (4 \wedge \ldots \wedge (n-1)) \otimes (3,1),
  \end{align*}
  which vanishes by Lemma~\ref{lem:Koszul relns}(ii).  The same
  argument applies to the other coefficients.
\end{proof}

We now prove Lemma~\ref{single bracket points}. The symbol $[\ldots]$
was defined in terms of the partial derivatives of
$\phi_1, \ldots, \phi_{n-2}$, so we start by computing these.  As
$\phi_1(i \otimes (j,k))=x_i(x_j-x_k)$, we see that
\begin{equation} \label{deriv1}
\frac{\partial \phi_{1}}{\partial x_{i}}(i \otimes (j,k))=x_j-x_k, \quad
\frac{\partial \phi_{1}}{\partial x_{j}}(i \otimes (j,k))=x_i, \quad 
\frac{\partial \phi_{1}}{\partial x_{k}}(i \otimes (j,k))=-x_i.
\end{equation}
It is immediate from~\eqref{diff eq} that for $2 \leqs m \leqs n-3$ we
have
\begin{equation} \label{deriv2}
\frac{\partial \phi_{m}}{\partial x_{i}}((i_1 \wedge \ldots \wedge i_{m}) 
\otimes (j,k))= \left\{ \begin{array}{ll}
(-1)^{\ell} (i_1 \wedge \ldots \wedge \widehat{i_\ell} \wedge \ldots  \wedge i_m) \otimes  (j,k)  & \text{ if } i = i_\ell \\
0 & \text{ if } i \not\in \{i_1, \ldots, i_m \} \end{array} \right.
\end{equation}
Since the statement of Lemma~\ref{single bracket points} allows for
an overall scalar $\lambda \in K$, we may re-scale $\phi_{n-2}$ so that
$\phi_{n-2}(1) = t$ where $t$ is given by \eqref{final syzygy
  eq}. Then by Lemma~\ref{lem:Koszul relns}(i) we have
\begin{equation} \label{deriv3} \frac{\partial \phi_{n-2}}{\partial
    x_{k}}(1)=\sum_{\substack{j=1 \\ j \not= k}}^{n-1} x_{j} \cdot
  (i_1 \wedge \ldots \wedge i_{n-3}) \otimes (j,k)
\end{equation}
where for each $j$ we pick $i_1, \ldots, i_{n-3},j,k$ an even
permutation of $1,2, \ldots, n-1$.

Now let $a_1,\ldots,a_{n-2},b$ be a permutation of $1,2, \ldots, n-1$.
We seek to compute
\[ [a_1,a_2,\ldots,a_{n-2}]=\left( \frac{\partial \phi_{1}} {\partial
      x_{a_1}} \circ \frac{\partial \phi_{2}}{\partial x_{a_2}} \cdots
    \circ \frac{\partial \phi_{n-2}}{\partial
      x_{a_{n-2}}}\right)(1). \] By~\eqref{deriv3} we have
 \[ [a_1,a_2,\ldots,a_{n-2}]
   =\left( \frac{\partial \phi_{1}} {\partial x_{a_1}} \circ
     \frac{\partial \phi_{2}}{\partial x_{a_2}}\circ \cdots \circ
     \frac{\partial \phi_{n-3}}{\partial x_{a_{n-3}}}\right)\left(
     \sum_{\substack{j=1\\j\not=a_{n-2}}}^{n-1} x_{j} \cdot (i_1
     \wedge \ldots \wedge i_{n-3}) \otimes (j,a_{n-2})\right) . \]
 where for each $j$ we pick $i_1, \ldots, i_{n-3},j,a_{n-2}$ an even
 permutation of $1,2, \ldots, n-1$. It is clear by~\eqref{deriv2} that
 for a non-zero contribution we need
 $\{ a_2, \ldots, a_{n-3} \} \subset \{i_1, \ldots, i_{n-3} \}$,
 equivalently $\{a_1, a_{n-2}, b \} \supset \{j, a_{n-2} \}$. So the
 only terms to contribute to the sum are those with $j = a_1$ and
 $j = b$. Using~\eqref{deriv1} and~\eqref{deriv2} we compute
\begin{align*}
  [a_1,a_2,\ldots,a_{n-2}]&= \pm \frac{\partial \phi_{1}}{\partial x_{a_1}}
  \bigg( x_{b} \cdot a_{1} \otimes (b,a_{n-2})
  - x_{a_{1}} \cdot b \otimes (a_1,a_{n-2}) \bigg)\\
  & =\pm x_{b} ( x_{b} - x_{a_{1}} - x_{a_{n-2}} ).
\end{align*}
Finally, it may be checked that the sign $\pm$ only depends on $n$ and
the sign of the permutation sending $1,2, \ldots,n-1$ to
$a_1,a_2,\ldots,a_{n-2},b$.

This completes the proof of
Lemma~\ref{single bracket points}, and hence of
Theorem~\ref{algebra constructed from n points}.
 
\section{Proof of Theorem~\ref{thm:overZ}}
\label{sec:newproof}
 
In this section we prove Theorem~\ref{thm:overZ} for general
$n \geqs 4$. This extends the proof for $n = 4$ in
Section~\ref{sec:orders}, and is based on the proof for $n$ odd in
\cite[Section~3.8]{LazarThesis}.
 
We write $1, \ldots, \widehat{ijk}, \ldots, n-1$ for the sequence of
integers $1,2, \ldots, n-1$, with $i,j,k$ deleted (in whatever order
they occur). Let $\varepsilon_{ij}$ and $\varepsilon_{ijk}$ be the
signs of the permutations taking $1,2, \ldots, n-1$ to
$i,j,1, \ldots, \widehat{ij}, \ldots, n-1$ and
$i,j,k,1, \ldots, \widehat{ijk}, \ldots n-1$, respectively.

With notation as in Sections~\ref{sec:state} and~\ref{sec:orders}, we
prove the following theorem.  It is a refinement of
Theorem~\ref{thm:overZ}, in that we now specify the signs.

\begin{theorem} \label{finaltheorem} Let $1 \leqs i,j,k \leqs n-1$
  distinct.  Then
 \begin{align}
 \label{braceidentity1}
   \frac{\partial^2 \Omega_k}{\partial x_i \partial x_j}
   &= (-1)^{n+1} \varepsilon_{ijk} (2n)
   \{ i,i,1, \ldots, \widehat{ijk}, \ldots, n-1,j,j\}, \\
 \label{braceidentity2}
  \frac{\partial^2 \Omega_j}{\partial x_i^2} 
   &= \varepsilon_{ij} (2n)
   \{ i,i,1, \ldots, \widehat{ij}, \ldots, n-1,i\}, \\
 \label{braceidentity3}
 \frac{\partial^2 \Omega_j}{\partial x_i \partial x_j} - 
   \frac{\partial^2 \Omega_k}{\partial x_i \partial x_k} 
   &= (-1)^n \varepsilon_{ijk} (2n)
   \{i,i,1, \ldots, \widehat{ijk}, \ldots, n-1,j,k\}, \\
  \label{braceidentity4}
   \frac{\partial^2 \Omega_i}{\partial x_i^2}
   -\frac{\partial^2 \Omega_j}{\partial x_i \partial x_j} - 
   \frac{\partial^2 \Omega_k}{\partial x_i \partial x_k} 
   &= (-1)^{n+1} \varepsilon_{ijk} (2n)
   \{i,j,1, \ldots, \widehat{ijk}, \ldots, n-1,k,i\}. 
\end{align}
\end{theorem}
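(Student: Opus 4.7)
The approach is to express $\frac{\partial^2}{\partial x_i \partial x_j}$ of each square bracket $[a_1, \ldots, a_{n-2}]$ as a sum of brace symbols, substitute into the definition of $\Omega_k = (-1)^k[[1, \ldots, \widehat{k}, \ldots, n-1]]$ as a signed sum over cyclic rotations, and then collapse the resulting linear combination of braces to $\pm 2n$ times the canonical brace from Table~\ref{table:strconst} using the symmetry lemmas of Section~\ref{sec:sym}.

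The first step rests on the observation that the middle derivatives $\frac{\partial \phi_r}{\partial x_{a_r}}$ for $2 \leqs r \leqs n-3$ are constant matrices, so $[a_1, \ldots, a_{n-2}]$ factors as a quadratic form (linear form in $x$) $\cdot$ (constant matrix) $\cdot$ (linear form in $x$). Applying the Leibniz rule, $\frac{\partial^2 [a_1, \ldots, a_{n-2}]}{\partial x_i \partial x_j}$ becomes a sum of two brace symbols of shape $\{a_1, i, a_2, \ldots, a_{n-3}, j, a_{n-2}\}$ (and its analogue with $i,j$ swapped), each carrying a coefficient $(1+\delta_{i,a_1})(1+\delta_{j,a_{n-2}})$ that records the quadratic nature of $\phi_1$ and $\phi_{n-2}$. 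These $(1+\delta)$ factors are precisely what produce the doubled endpoint arguments $\{i,i,\ldots,j,j\}$ in \eqref{braceidentity1} and $\{i,i,\ldots,i\}$ in \eqref{braceidentity2}.

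Next, I would substitute this formula into the cyclic sum defining $\Omega_k$ and apply Lemma~\ref{swaplemma} (a swap of adjacent middle indices multiplies by $-1$) together with the obvious brace-symmetries in the first two and last two arguments. Every resulting term then reduces to the canonical brace from Table~\ref{table:strconst} up to an explicit sign. For the shear-invariant combinations on the left of \eqref{braceidentity3} and \eqref{braceidentity4}, the brace terms that cannot be brought to canonical form ought to cancel in pairs via Lemma~\ref{swaplemma2}(ii), which is the brace-symbol analogue of $\phi_1 \phi_2 = 0$ and $\phi_{n-3} \phi_{n-2} = 0$ — precisely the reason these particular linear combinations of structure constants are the ones invariant under shear.

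The main obstacle will be the sign and parity bookkeeping. The overall sign $(-1)^n$ versus $(-1)^{n+1}$ encodes both the self-duality sign from Lemma~\ref{reverselemma} and the parity of the cyclic rotations contributing to each identity, while $\varepsilon_{ij}$ and $\varepsilon_{ijk}$ measure the permutation required to put the arguments of each brace into the canonical order of the table. Because $[[\cdots]]$ sums only over even powers of $\sigma \in S_{n-2}$, the cases $n$ even and $n$ odd behave differently in terms of which rotations land $i$ and $j$ at the extreme positions of the bracket; I would either treat these parities separately or unify them by invoking Lemma~\ref{double bracket permute}. A careful enumeration of all contributions (the generic rotations plus the ``coincidence'' terms where $i$ or $j$ already sits at an endpoint and the $(1+\delta)$ factor is $2$) should then yield the claimed overall coefficient $2n$.
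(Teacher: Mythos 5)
Your proposal follows essentially the same route as the paper's proof: the second derivative of a single bracket $[a_1,\ldots,a_{n-2}]$ becomes a sum of two brace symbols with endpoint multiplicities $(1+\delta)$ coming from the quadratic maps $\phi_1$ and $\phi_{n-2}$, the cyclic sum over $\sigma^{2k}$ in $[[\cdots]]$ produces the coefficient $2(n-2)+2+2=2n$, and the brace symmetries together with the identities obtained by differentiating $\phi_1\phi_2=0$ reduce everything to the canonical symbol with the stated sign. The only point where your description departs from what actually happens is for \eqref{braceidentity3} and \eqref{braceidentity4}: the non-canonical terms do not cancel in pairs but rather combine via the three-term cyclic identity (Lemma~\ref{lem:triple}) into a single canonical brace, and \eqref{braceidentity4} further requires the auxiliary quantity $A(i,j)$ of Lemma~\ref{lem:indepsum} and adding \eqref{braceidentity3} --- but this is execution detail within the same strategy.
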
   
 
For the proof we need some properties of the symbols $\{ \cdots \}$.
First, it is immediate from the definition that the symbol does not
depend on the order of the first two terms, or on the order of the
last two terms.  We have the following additional symmetry properties.

\begin{lemma}
\label{lem:sym}
\begin{enumerate}
\item For any $\tau \in S_{n-4}$ we have
  \[ \{ i,j, a_{\tau(1)}, \ldots, a_{\tau(n-4)},k,\ell \}
      = {\rm sign}(\tau) \{i,j,a_1, \ldots, a_{n-4},k,\ell\}. \]
\item If $i \in \{ a_1, \ldots, a_{n-3} \}$ then for any
  $\tau \in S_{n-3}$ we have
  \[ \{ i, a_{\tau(1)}, \ldots, a_{\tau(n-3)},k,\ell \}
    = {\rm sign}(\tau) \{i,a_1, \ldots, a_{n-3},k, \ell\}. \]
\item We have
  $\{ i,j, a_1, \ldots, a_{n-4}, k, \ell \}
  = -\{ k, \ell, a_1, \ldots, a_{n-4}, i,j \}$.
\end{enumerate}
\end{lemma}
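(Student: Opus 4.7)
The proof of (i) is a direct analogue of Lemma~\ref{swaplemma}. In the symbol $\{i, j, a_1, \ldots, a_{n-4}, k, \ell\}$, the middle indices $a_1, \ldots, a_{n-4}$ are precisely the derivative indices for the linear matrices $\phi_2, \ldots, \phi_{n-3}$. For $2 \leqs r \leqs n-4$ we have $\phi_r \phi_{r+1} = 0$, so differentiating twice yields $\frac{\partial \phi_r}{\partial x_p} \frac{\partial \phi_{r+1}}{\partial x_q} = -\frac{\partial \phi_r}{\partial x_q} \frac{\partial \phi_{r+1}}{\partial x_p}$. Hence any adjacent swap of middle indices flips the sign, and since adjacent transpositions generate $S_{n-4}$, part (i) follows.

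For part (ii), in view of (i) it suffices to establish the single swap $a_1 \leftrightarrow a_2$, because $S_{n-3}$ is generated by this transposition together with the subgroup $S_{n-4}$ fixing $a_1$. The key tool is a three-term relation obtained from $\frac{\partial^3 (\phi_1 \phi_2)}{\partial x_i \partial x_{a_1} \partial x_{a_2}} = 0$. Since $\phi_1$ has quadratic entries and $\phi_2$ has linear entries, only distributions with two derivatives on $\phi_1$ and one on $\phi_2$ survive, giving (for distinct $i, a_1, a_2$)
\[ \frac{\partial^2 \phi_1}{\partial x_i \partial x_{a_1}} \frac{\partial \phi_2}{\partial x_{a_2}} + \frac{\partial^2 \phi_1}{\partial x_i \partial x_{a_2}} \frac{\partial \phi_2}{\partial x_{a_1}} + \frac{\partial^2 \phi_1}{\partial x_{a_1} \partial x_{a_2}} \frac{\partial \phi_2}{\partial x_i} = 0. \]
Multiplying on the right by $\frac{\partial \phi_3}{\partial x_{a_3}} \cdots \frac{\partial \phi_{n-3}}{\partial x_{a_{n-3}}} Q(k, \ell)$ and re-expressing the mixed partials of $\phi_1$ in terms of the $P$ symbols, this becomes
\[ \{i, a_1, a_2, a_3, \ldots, k, \ell\} + \{i, a_2, a_1, a_3, \ldots, k, \ell\} + \{a_1, a_2, i, a_3, \ldots, k, \ell\} = 0. \]
When $i = a_\nu$ for some $\nu \geqs 3$, the third term has $i$ appearing twice among its middle derivative indices and so vanishes by (i). The boundary cases $i = a_1$ or $i = a_2$ are handled by the analogous identity $\frac{\partial^3(\phi_1 \phi_2)}{\partial x_i^2 \partial x_{a_2}} = 0$, which after accounting for the multiplicity $\frac{\partial^2 \phi_1}{\partial x_i^2} = 2 P(i,i)$ yields $\{i, i, a_2, \ldots\} = -\{i, a_2, i, \ldots\}$ directly.

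For (iii) I would invoke the Gorenstein self-duality of $F_\bullet$, exactly as in the proof of Lemma~\ref{reverselemma}. Writing $\{i, j, a_1, \ldots, a_{n-4}, k, \ell\} = P(i, j)\, M\, Q(k, \ell)$ as a scalar, with $M = \frac{\partial \phi_2}{\partial x_{a_1}} \cdots \frac{\partial \phi_{n-3}}{\partial x_{a_{n-4}}}$, this scalar equals its transpose $Q(k, \ell)^T M^T P(i, j)^T$. Under the chain isomorphism $F_\bullet \isom F_\bullet^*$ the transpose $\phi_r^T$ is identified with $\phi_{n-1-r}$; in particular $P(i, j)^T$ is identified with $Q(i, j)$, and $M^T$ with the reversed middle product. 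The accumulated sign $\eta_n \in \{\pm 1\}$ is the same one that appears in Lemma~\ref{reverselemma}, depending only on $n \bmod 4$. This gives
\[ \{i, j, a_1, \ldots, a_{n-4}, k, \ell\} = \eta_n \cdot \{\ell, k, a_{n-4}, \ldots, a_1, j, i\}. \]
Symmetry in the outer pairs rewrites the right-hand side as $\eta_n \cdot \{k, \ell, a_{n-4}, \ldots, a_1, i, j\}$, and reversing the middle via (i) contributes a further factor of $(-1)^{\binom{n-4}{2}}$. A short case check on $n \bmod 4$ then shows $\eta_n \cdot (-1)^{\binom{n-4}{2}} = -1$ uniformly, yielding (iii). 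The main obstacle is precisely this sign bookkeeping: one must correctly isolate the cumulative self-duality sign, combine it with the reversal sign from (i), and verify that these signs multiply to $-1$ in every residue class of $n \bmod 4$.
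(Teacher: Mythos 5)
Your proof is correct and follows essentially the same route as the paper: part (i) by differentiating $\phi_r\phi_{r+1}=0$ exactly as in Lemma~\ref{swaplemma}, part (ii) by relations obtained from $\phi_1\phi_2=0$, and part (iii) by the Gorenstein self-duality of $F_\bullet$ as in Lemma~\ref{reverselemma} combined with (i), with the sign check $\eta_n\cdot(-1)^{\binom{n-4}{2}}=-1$ indeed holding in every residue class of $n$ modulo $4$. The only minor variation is in (ii), where you use the full three-term relation (which is the paper's Lemma~\ref{lem:triple}) together with the vanishing of symbols having a repeated middle index, whereas the paper gets by with just the degenerate relation $\{i,i,j,\ldots\}+\{i,j,i,\ldots\}=0$ and part (i); both arguments are valid.
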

\begin{proof}
  (i) In the case where $\tau$ is a transposition of consecutive
  elements this is proved exactly as in Lemma~\ref{swaplemma}.
  The general case follows. \\
  (ii) Differentiating $\phi_1 \phi_2 = 0$ gives
  $\{i,i,j, \ldots \} + \{i,j,i, \ldots \} = 0$. We are done by (i). \\
  (iii) Exactly as in Lemma~\ref{reverselemma}, we have
  $\{i,j,a_1, \ldots, a_{n-4},k, \ell\} = \pm \{k,\ell,a_{n-4},
  \ldots, a_1,i,j\}$ where the sign is $+$ if and only if
  $n \equiv 2,3 \pmod{4}$. 
  We are done by (i).
\end{proof}
 
\begin{lemma}
  \label{lem:triple}
  If $1 \leqs i,j,k \leqs n-1$ distinct then
  \[ \{ i,j,k,a_1, \ldots, a_{n-3} \} + \{ j,k,i,a_1, \ldots, a_{n-3}
    \} + \{ k,i,j,a_1, \ldots, a_{n-3} \} = 0. \]
\end{lemma}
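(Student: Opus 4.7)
The plan is to extract the identity directly from the chain complex relation $\phi_1 \phi_2 = 0$, by differentiating it three times. Since $\phi_1$ is a matrix of quadratic forms and $\phi_2$ is a matrix of linear forms, most of the Leibniz terms vanish on degree grounds: any third derivative of $\phi_1$ is zero, and any second derivative of $\phi_2$ is zero. Applying $\partial^3/(\partial x_i \partial x_j \partial x_k)$ to $\phi_1 \phi_2 = 0$ and discarding the vanishing terms, I expect exactly the three-term identity
\[ \frac{\partial^2 \phi_1}{\partial x_i \partial x_j} \frac{\partial \phi_2}{\partial x_k} + \frac{\partial^2 \phi_1}{\partial x_j \partial x_k} \frac{\partial \phi_2}{\partial x_i} + \frac{\partial^2 \phi_1}{\partial x_k \partial x_i} \frac{\partial \phi_2}{\partial x_j} = 0. \]

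Since $i,j,k$ are distinct, each mixed second partial $\partial^2 \phi_1/\partial x_i \partial x_j$ is exactly the row vector $P(i,j)$ (with $P$ extended symmetrically in its two arguments, consistent with the definition $\phi_1 = \sum_{i\leqs j} P(i,j) x_i x_j$). So the identity reads
\[ P(i,j)\,\partial_k \phi_2 + P(j,k)\,\partial_i \phi_2 + P(k,i)\,\partial_j \phi_2 = 0. \]
Now right-multiply this by the common tail $\frac{\partial \phi_3}{\partial x_{a_1}} \cdots \frac{\partial \phi_{n-3}}{\partial x_{a_{n-5}}} Q(a_{n-4},a_{n-3})$ (where $a_1,\ldots,a_{n-3}$ are the remaining indices in the statement). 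By the definition of $\{\cdots\}$, the three summands become $\{i,j,k,a_1,\ldots,a_{n-3}\}$, $\{j,k,i,a_1,\ldots,a_{n-3}\}$, and $\{k,i,j,a_1,\ldots,a_{n-3}\}$ respectively, after using the already-noted invariance of $\{\cdots\}$ under swap of its first two arguments (this is needed only for the middle term, where the natural ordering of the first two slots would be $i,k$ rather than $k,i$).

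There is no real obstacle here: the argument is essentially a bookkeeping exercise, and the only thing to be careful about is the dimension-counting that kills the extra Leibniz terms. Note that the proof uses nothing about $\phi_3,\ldots,\phi_{n-2}$ beyond the fact that they supply a fixed tail, so the lemma is a consequence purely of the relation $\phi_1 \phi_2 = 0$ together with the bigrading on the minimal free resolution.
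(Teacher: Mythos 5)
Your proof is correct and is exactly the paper's argument: the paper's entire proof of this lemma is the one-line remark that it follows by differentiating $\phi_1\phi_2=0$, and your write-up supplies precisely the Leibniz/degree bookkeeping (third derivatives of the quadratic $\phi_1$ and second derivatives of the linear $\phi_2$ vanish) that this remark leaves implicit. The identification of $\partial^2\phi_1/\partial x_i\partial x_j$ with the symmetrically extended $P(i,j)$ and the use of the symmetry of the first two slots to rewrite $P(i,k)$ as $P(k,i)$ are both as intended.
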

\begin{proof}
  This is proved by differentiating $\phi_1 \phi_2 =0$.
\end{proof}
 
We also have the analogues of Lemmas~\ref{lem:sym}(ii)
and~\ref{lem:triple} where each symbol is reversed. We write $1_A$ for
the indicator function of the event $A$.
 
We prove~\eqref{braceidentity1} by taking
$a_1, \ldots, a_{n-2} = 1, \ldots, \widehat{k}, \ldots, n-1$ in the
following lemma.
\begin{lemma}
 \label{lem:braces1}
 Let $1 \leqs i,j,k \leqs n-1$ distinct. Let $a_1, \ldots, a_{n-2},k$ be a
 permutation of $1,2, \ldots, n-1$. Then
 \begin{equation} \label{partial1} \frac{\partial^2 [a_1, \ldots,
     a_{n-2}]}{\partial x_i \partial x_j} = \pm \left(2 + 1_{i \in
       \{a_1,a_{n-2}\}} + 1_{j \in \{a_1,a_{n-2}\}} \right) \{ i,i,1,
   \ldots, \widehat{ijk}, \ldots, n-1,j,j\} \end{equation}
 and \begin{equation} \label{partial2} \frac{\partial^2 [[a_1, \ldots,
     a_{n-2}]]}{\partial x_i \partial x_j} = \pm 2n \{ i,i,1, \ldots,
   \widehat{ijk}, \ldots,n-1,j,j\}, \end{equation} where $\pm$ is the
 sign of the permutation taking $a_1, \ldots, a_{n-2}$ to
 $i,1, \ldots, \widehat{ijk}, \ldots, n-1,j$.
\end{lemma}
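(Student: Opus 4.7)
The plan is to apply the Leibniz rule to $[a_1,\ldots,a_{n-2}]$ and then reduce the resulting brace symbols to the canonical form $\Theta := \{i,i,1,\ldots,\widehat{ijk},\ldots,n-1,j,j\}$ via the symmetries of Lemma~\ref{lem:sym}. Since $\phi_2,\ldots,\phi_{n-3}$ are matrices of linear forms, the factors $\partial\phi_r/\partial x_{a_r}$ for $2\leqs r\leqs n-3$ in the matrix product $[a_1,\ldots,a_{n-2}]$ are constant, while the outer factors $\partial\phi_1/\partial x_{a_1}$ and $\partial\phi_{n-2}/\partial x_{a_{n-2}}$ are linear (with no non-vanishing third derivative since $\phi_1,\phi_{n-2}$ are quadratic). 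Hence $\partial^2/(\partial x_i\,\partial x_j)$ contributes exactly two non-zero terms, and the identity $\partial^2\phi_1/(\partial x_a\,\partial x_b) = (1+\delta_{ab})P(a,b)$ (with $P$ symmetrized) together with its analog for $\phi_{n-2}$ yields
\[
\frac{\partial^2 [a_1,\ldots,a_{n-2}]}{\partial x_i\,\partial x_j} = (1+\delta_{a_1,i})(1+\delta_{a_{n-2},j})\,S_C + (1+\delta_{a_1,j})(1+\delta_{a_{n-2},i})\,S_D,
\]
where $S_C = \{a_1,i,a_2,\ldots,a_{n-3},a_{n-2},j\}$ and $S_D$ is obtained from $S_C$ by exchanging $i$ and $j$.

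Next, I reduce each of $S_C, S_D$ to $\pm\Theta$. The available moves are: free swap of positions $1$--$2$ and of positions $(n-1)$--$n$ (from symmetry of $P,Q$); sign permutation of the middle $n-4$ positions by Lemma~\ref{lem:sym}(i); sign permutation of positions $2,\ldots,n-2$ by (ii) when the value at position $1$ repeats there; and a right-sided analog (ii$'$) that sign-permutes positions $3,\ldots,n-1$ when the value at position $n$ repeats there, proved by applying the argument for (ii) to $\phi_{n-3}\phi_{n-2}=0$ (using (i) to generate all of $S_{n-3}$ from the single Koszul transposition of positions $n-2$, $n-1$ together with the middle $S_{n-4}$). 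The reduction splits into three cases according to $|\{a_1,a_{n-2}\} \cap \{i,j\}|$. In the \emph{generic} case (empty intersection), (ii) moves $i$ into position $2$ of $S_C$ and (ii$'$) moves $j$ into position $n-1$, after which (i) sorts the middle; the same reduction applies to $S_D$, yielding the same sign and combined coefficient $2$. In the \emph{half-matching} case, one of $S_C, S_D$ already has a correct endpoint and the corresponding $\delta$-coefficient contributes the extra $1$, giving total $3$. In the \emph{fully matching} case $\{a_1,a_{n-2}\} = \{i,j\}$, one of the two brace symbols vanishes by antisymmetry: for example, if $a_1=i$ and $a_{n-2}=j$, then by Lemma~\ref{lem:sym}(iii) we have $S_D = -\{j,i,a_2,\ldots,a_{n-3},i,j\}$, and the free swaps of the endpoint pairs return this to $-S_D$, so $S_D=0$; the surviving $S_C$ has coefficient $(1+1)(1+1)=4$. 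In every case the total coefficient equals $2 + 1_{i\in\{a_1,a_{n-2}\}} + 1_{j\in\{a_1,a_{n-2}\}}$, and tracking the transposition signs accumulated through (ii), (ii$'$), and (i) confirms that the overall sign is that of the permutation sending $a_1,\ldots,a_{n-2}$ to $i,1,\ldots,\widehat{ijk},\ldots,n-1,j$, proving~\eqref{partial1}.

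Finally, summing~\eqref{partial1} over the $n-2$ cyclic shifts in the definition of $[[a_1,\ldots,a_{n-2}]]$ gives~\eqref{partial2}. Since each $\sigma^{2r}$ is an even permutation, the sign in~\eqref{partial1} is constant across the sum. Meanwhile, a direct count---uniform in the parity of $n$, thanks to the way $[[\cdots]]$ double-counts its terms when $n-2$ is even---shows that the coefficients sum to $2(n-2)+4 = 2n$, with each of $i,j$ occurring as an endpoint of a shifted bracket in exactly two of the summands.

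The main technical obstacle is the case analysis of the second step: in the generic case, chaining (ii) and (ii$'$) together with (i) and matching the accumulated sign against the target permutation requires careful bookkeeping, and in the fully matching case the antisymmetric vanishing of one of the two brace contributions is essential---without it the bare Leibniz coefficients would sum to $5$ instead of the correct $4$.
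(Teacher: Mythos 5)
Your proof is correct and follows essentially the same route as the paper's: the Leibniz rule applied to the two quadratic factors $\phi_1,\phi_{n-2}$ producing two brace symbols with coefficients $(1+\delta_{a_1,i})(1+\delta_{a_{n-2},j})$ etc., reduction of each symbol to the canonical $\{i,i,1,\ldots,\widehat{ijk},\ldots,n-1,j,j\}$ via Lemma~\ref{lem:sym} and its reversed analogue, a case split on $|\{a_1,a_{n-2}\}\cap\{i,j\}|$ in which the fully-matching case relies on the vanishing forced by Lemma~\ref{lem:sym}(iii), and the count $2(n-2)+2+2=2n$ over the cyclic shifts (valid in both parities because $\sigma^2$ is even). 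The only difference is that you make explicit some sign- and hypothesis-bookkeeping that the paper leaves implicit.
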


\begin{proof}
  We first prove~\eqref{partial1} when
  $\{a_1,a_{n-2}\} \cap \{i,j\} = \emptyset$. Using
  Lemma~\ref{lem:sym} we compute
 \begin{align*}
   \frac{\partial^2 [a_1, \ldots, a_{n-2}]}{\partial x_i \partial x_j} 
   &= \{ i, a_1, \ldots, a_{n-2},j \} + \{ j,a_1, \ldots, a_{n-2},i \}   \\ 
   &= \pm \left(\{ i, i, 1, \ldots,  \widehat{ijk}, \ldots, n-1,j,j \}
     - \{ j,j, 1, \ldots, \widehat{ijk}, \ldots, n-1,i,i \} \right) \\
   &= \pm 2 \{ i,i,1,  \ldots, \widehat{ijk}, \ldots,n-1,j,j\}.  
 \end{align*}
 If $a_1 = i$ and $a_{n-2} \not= j$ then the first term picks up a
 factor of $2$, and the second term is unchanged. If $a_1 = i$ and
 $a_{n-2} = j$ then the first term picks up a factor of $4$ and the
 second term vanishes by Lemma~\ref{lem:sym}(iii). The other cases are
 similar.
 
 We deduce~\eqref{partial2} from~\eqref{partial1} by summing over the
 $n-2$ terms in Definition~\ref{def:brackets}(ii).  Since $\sigma^2$
 is an even permutation, all the terms have the same sign.  There are
 two terms starting or ending in $i$, and two terms starting or ending
 in $j$. This gives an overall numerical factor of $2(n-2) + 2 + 2 = 2n$.
 \end{proof}
 
 We prove~\eqref{braceidentity2} by taking
 $a_1, \ldots, a_{n-2} = 1, \ldots, \widehat{j}, \ldots, n-1$ in the
 following lemma.
 \begin{lemma}
 \label{lem:braces2}
 Let $1 \leqs i,j \leqs n-1$ distinct. Let $a_1, \ldots, a_{n-2},j$ be a
 permutation of $1,2, \ldots, n-1$. Then
 \begin{equation*} 
   \frac{\partial^2 [a_1, \ldots,
     a_{n-2}]}{\partial x_i^2} = \pm 2\left(1 + 1_{i \in
       \{a_1,a_{n-2}\}} \right) \{ i,i,1, \ldots, \widehat{ij},
   \ldots,n-1,i\} \end{equation*} and \begin{equation*} 
   \frac{\partial^2 [[a_1, \ldots, a_{n-2}]]}{\partial x_i^2} = \pm 2n
   \{ i,i,1, \ldots, \widehat{ij}, \ldots,n-1,i\}, \end{equation*}
 where $\pm$ is the sign of the permutation taking
 $a_1, \ldots, a_{n-2}$ to $i,1, \ldots, \widehat{ij}, \ldots, n-1$.
 \end{lemma}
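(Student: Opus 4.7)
The plan is to mirror the proof of Lemma~\ref{lem:braces1} line by line, specialising to two partial derivatives in the same variable $x_i$.

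For the first displayed identity, apply the product rule to $[a_1,\ldots,a_{n-2}] = (\partial\phi_1/\partial x_{a_1})\cdots(\partial\phi_{n-2}/\partial x_{a_{n-2}})$. Each factor $\partial\phi_k/\partial x_{a_k}$ is a constant matrix for $2 \leqs k \leqs n-3$ (since $\phi_k$ is linear), while the two remaining factors $\partial\phi_1/\partial x_{a_1}$ and $\partial\phi_{n-2}/\partial x_{a_{n-2}}$ are themselves linear, so they have vanishing second derivatives. The only non-trivial contribution to $\partial^2/\partial x_i^2$ is the cross term putting one $\partial/\partial x_i$ on each of the two end factors:
\[
  \frac{\partial^2 [a_1,\ldots,a_{n-2}]}{\partial x_i^2}
  = 2\,\frac{\partial^2\phi_1}{\partial x_i\,\partial x_{a_1}}
  \cdot\frac{\partial\phi_2}{\partial x_{a_2}}\cdots\frac{\partial\phi_{n-3}}{\partial x_{a_{n-3}}}
  \cdot\frac{\partial^2\phi_{n-2}}{\partial x_i\,\partial x_{a_{n-2}}}.
\]
Rewriting the end factors as $(1+\delta_{a_1 i})P(a_1,i)$ and $(1+\delta_{a_{n-2} i})Q(a_{n-2},i)$, and noting that at most one of $a_1,a_{n-2}$ can equal $i$, this becomes $2\bigl(1 + 1_{i\in\{a_1,a_{n-2}\}}\bigr)\,\{a_1,i,a_2,\ldots,a_{n-3},a_{n-2},i\}$.

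Next I would identify
\[ \{a_1,i,a_2,\ldots,a_{n-3},a_{n-2},i\} = \epsilon\,\{i,i,1,\ldots,\widehat{ij},\ldots,n-1,i\}, \]
where $\epsilon$ is the sign of the permutation taking $(a_1,\ldots,a_{n-2})$ to $(i,1,\ldots,\widehat{ij},\ldots,n-1)$. This is a case analysis on the position $p$ with $a_p = i$. When $1 \leqs p \leqs n-3$, the symmetry of the first two entries moves $i$ to position $1$, and Lemma~\ref{lem:sym}(ii) then freely permutes positions $2$ through $n-2$ with a matching sign. When $p = n-2$, one first applies Lemma~\ref{lem:sym}(iii) to swap the $P$- and $Q$-blocks (forcing $i$ into the middle), and then proceeds as before, with Lemma~\ref{lem:sym}(i) sorting the remaining strictly-middle entries. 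Any stuck configurations are resolved using the vanishing identity $\{i,i,i,\ldots\} = 0$ (which follows from differentiating $\phi_1\phi_2 = 0$ three times in $x_i$, or equivalently from Lemma~\ref{lem:sym}(ii) applied to a trivial swap of two copies of $i$), together with the triple relation of Lemma~\ref{lem:triple} or its reversed analogue. At each elementary step the sign contributed to the symbol agrees with that of the transposition induced on $(a_1,\ldots,a_{n-2})$, so the total sign compiles to exactly $\epsilon$.

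For the double-bracket identity I would sum the first equation over the $n-2$ cyclic shifts of Definition~\ref{def:brackets}(ii). Since every $\sigma^{2k}$ is an even permutation the sign $\epsilon$ is constant across the sum, and the numerical prefactor equals
\[
  2\sum_{k=1}^{n-2}\bigl(1 + 1_{i\in\{a_{\sigma^{2k}(1)}, a_{\sigma^{2k}(n-2)}\}}\bigr) = 2(n-2) + 2C_i,
\]
where $C_i$ counts the $k \in \{1,\ldots,n-2\}$ with $\sigma^{2k}(1) = p$ or $\sigma^{2k}(n-2) = p$. The same parity analysis used in the proof of Lemma~\ref{lem:braces1} (applied to $2k \equiv p-1,\, p \pmod{n-2}$) gives $C_i = 2$ in every case, so the prefactor is $2(n-2)+4 = 2n$, as required.

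The principal difficulty is the sign bookkeeping in the middle step when $p = n-2$, since Lemma~\ref{lem:sym}(ii) cannot reach the $Q$-slot occupied by $i$ without first invoking Lemma~\ref{lem:sym}(iii) together with the vanishing relations above. This step is more delicate than the corresponding boundary case of Lemma~\ref{lem:braces1}, where one of the two contributing symbols conveniently vanishes by Lemma~\ref{lem:sym}(iii), but the same arsenal of symmetries still resolves it.
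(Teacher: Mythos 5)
Your proposal is correct and follows essentially the same route as the paper's (considerably terser) proof: the Leibniz rule isolates the single cross term where one $\partial/\partial x_i$ lands on each of the two quadratic end matrices, giving the factor $2\left(1+1_{i\in\{a_1,a_{n-2}\}}\right)$ times a brace symbol, which is then normalised via Lemma~\ref{lem:sym}, and the double-bracket statement follows from the count $2(n-2)+2\cdot 2=2n$ over the even cyclic shifts. Your extra identity $\{i,i,i,\ldots\}=0$ is true but not actually needed, and your sign bookkeeping, while only sketched, is at least as detailed as the paper's own.
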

  
 \begin{proof}
   If $i \notin \{a_1, a_{n-2}\}$ then
   \[
     \frac{\partial^2 [a_1, \ldots, a_{n-2}]}{\partial x_i^2} = 2 \{
     i, a_1, \ldots, a_{n-2},i \} = \pm \{ i, i, 1, \ldots,
     \widehat{ij}, \ldots, n-1,i \}.  \] If $i \in \{a_1, a_{n-2}\}$
   then we pick up an extra factor of $2$.
   This proves the result for $[ \cdots ]$.  
   We deduce the result for $[[ \cdots ]]$ exactly as before.
 \end{proof} 
  
We prove~\eqref{braceidentity3} by taking $r=0$ and
$b_1, \ldots, b_s = 1, \ldots, \widehat{jk}, \ldots, n-1$ in the
following lemma, and also using Lemma~\ref{double bracket permute}.

\begin{lemma}
  \label{lem:symbols3}
  Let $1 \leqs i,j,k \leqs n-1$ distinct. Let
  $a_{1}, \ldots, a_{r},b_{1}, \ldots, b_{s},j,k$ be a permutation of
  $1,2, \ldots, n-1$. Then
  \begin{align*} & \frac{\partial^2[a_1, \ldots, a_{r},k,b_{1},
      \ldots,b_{s}]}{\partial x_i \partial x_j} +
    \frac{\partial^2[a_1, \ldots, a_{r},j,b_{1},
      \ldots,b_{s}]}{\partial x_i \partial x_k} \\ & \qquad \qquad
    \qquad = \pm (-1)^s \left( 2 + 1_{ i \in \{a_1,b_s \}} +
      1_{rs=0} \right) \{i,i,1, \ldots, \widehat{ijk}, \ldots,
    n-1,j,k\},
  \end{align*}
  and
  \begin{align*} & \frac{\partial^2[[a_1, \ldots, a_{r},k,b_{1},
      \ldots,b_{s}]]}{\partial x_i \partial x_j} +
    \frac{\partial^2[[a_1, \ldots, a_{r},j,b_{1},
      \ldots,b_{s}]]}{\partial x_i \partial x_k} \\ & \qquad \qquad
    \qquad \qquad = \pm (-1)^s (2n) \{i,i,1, \ldots, \widehat{ijk},
    \ldots, n-1,j,k\},
  \end{align*}
  where $\pm$ is the sign of the permutation taking
  $a_1, \ldots, a_r,b_1,\ldots,b_s$ to
  $i,1, \ldots, \widehat{ijk}, \ldots, n-1$.
\end{lemma}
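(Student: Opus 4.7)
The plan is to expand the two second derivatives on the left-hand side via the Leibniz rule, exploit that the middle differentials $\phi_2,\ldots,\phi_{n-3}$ are linear (so their higher derivatives vanish), and then reduce everything to a scalar multiple of $\{i,i,1,\ldots,\widehat{ijk},\ldots,n-1,j,k\}$ by repeated applications of the symmetries in Lemmas~\ref{lem:sym} and~\ref{lem:triple}.

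Concretely, the Leibniz rule leaves only contributions where the two derivatives split between $\phi_1$ and $\phi_{n-2}$, giving
\[
\partial_i\partial_j[c_1,\ldots,c_{n-2}]
 = (1+\delta_{i,c_{n-2}})\{j,c_1,\ldots,c_{n-2},i\}
 + (1+\delta_{i,c_1})\{i,c_1,\ldots,c_{n-2},j\}
\]
(the $\delta_{j,\cdot}$ terms vanish because $j\notin\{c_1,\ldots,c_{n-2}\}$). Applying this to the two brackets in the lemma statement and regrouping, the left-hand side becomes a sum of four symbols, which I would split into the ``$i$-at-end'' pair
\[
E_1 = \{j,a_1,\ldots,a_r,k,b_1,\ldots,b_s,i\} + \{k,a_1,\ldots,a_r,j,b_1,\ldots,b_s,i\}
\]
(with overall prefactor $1+\delta_{i,b_s}$) and the ``$i$-at-start'' pair
\[
E_2 = \{i,a_1,\ldots,a_r,k,b_1,\ldots,b_s,j\} + \{i,a_1,\ldots,a_r,j,b_1,\ldots,b_s,k\}
\]
(with prefactor $1+\delta_{i,a_1}$ when $r\geqs 1$, and $1$ when $r=0$).

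For $E_1$ in the generic range $r,s\geqs 1$, I would use the symmetry in the first two arguments to bring $a_1$ to position~$1$, then use Lemma~\ref{lem:sym}(i) to slide $k$ (respectively $j$) to position~$3$ at the cost of $(-1)^{r-1}$, and then collapse the resulting sum $\{a_1,j,k,\ldots\}+\{a_1,k,j,\ldots\}$ into a single symbol $-\{j,k,a_1,\ldots,b_s,i\}$ via Lemma~\ref{lem:triple}. An application of Lemma~\ref{lem:sym}(iii) swaps the first and last pairs, and Lemma~\ref{lem:sym}(ii) then slides $i$ (which appears in the middle by hypothesis) into position~$2$. The dual argument for $E_2$ pushes $k,j$ to position $n-2$ by Lemma~\ref{lem:sym}(i), applies the reverse form of Lemma~\ref{lem:triple} (which the text after Lemma~\ref{lem:triple} mentions) on the last three positions, and ends with Lemma~\ref{lem:sym}(ii). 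A short sign-tracking computation---using $r+s=n-3$ and that the intermediate ``parent'' sequences $(b_s,a_1,\ldots,a_r,b_1,\ldots,b_{s-1})$ and $(a_1,\ldots,a_r,b_1,\ldots,b_s)$ differ by a cyclic shift of $n-3$ elements---shows that both $E_1$ and $E_2$ equal $(-1)^s\cdot\epsilon\cdot\{i,i,1,\ldots,\widehat{ijk},\ldots,n-1,j,k\}$, where $\epsilon=\pm$ is the sign in the lemma statement. Summing with the Leibniz prefactors then produces the baseline $2$ plus the bonus $1_{i\in\{a_1,b_s\}}$ from the $(1+\delta)$ factors.

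The remaining summand $1_{rs=0}$ in the coefficient comes from degeneracy at the boundary. When $r=0$, $k$ sits at position~$1$ and the two terms in $E_1$ become equal already after the first-two swap (bypassing the triple relation), so $E_1$ acquires an extra factor of $2$; meanwhile $E_2$ collapses via Lemma~\ref{lem:sym}(iii) applied individually to its two terms, yielding the correct totals in each sub-case. The case $s=0$ is symmetric. The double-bracket identity then follows by summing the single-bracket one over the $n-2$ cyclic shifts $\sigma^{2k'}$: since $\sigma^2$ is even all signs agree, and the total coefficient becomes $2(n-2)+2+2=2n$, with each boundary condition ($i$ landing at a boundary, $k$ landing at a boundary) contributing exactly $2$. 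The main obstacle I anticipate is the careful bookkeeping of permutation signs through all the symmetry applications, especially in the degenerate cases where several reduction steps short-circuit in non-uniform ways---in fact these are precisely the cases that produce the extra $1_{rs=0}$ in the coefficient.
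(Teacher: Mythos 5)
Your proposal follows essentially the same route as the paper's proof: expand the second derivatives by Leibniz into the ``$i$-at-start'' and ``$i$-at-end'' pairs with multiplicity prefactors $(1+\delta_{i,c_1})$, $(1+\delta_{i,c_{n-2}})$, reduce each pair to $\pm(-1)^s\{i,i,1,\ldots,\widehat{ijk},\ldots,n-1,j,k\}$ via Lemmas~\ref{lem:sym} and~\ref{lem:triple}, track the degenerate boundary cases to produce the extra $1_{rs=0}$, and sum over the $n-2$ even cyclic shifts to get the factor $2n$. The only quibble is that in the $r=0$, $i\ne b_s$ sub-case the ``$i$-at-start'' pair is not finished by Lemma~\ref{lem:sym}(iii) alone (that lemma only kills it when $i=b_s$); you still need the generic slide-and-triple-relation reduction to see it equals one copy of the target symbol, but this is exactly the bookkeeping you flag and the paper resolves it the same way.
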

  
\begin{proof}
  We first suppose that $r,s \geqs 1$. Using Lemmas~\ref{lem:sym}
  and~\ref{lem:triple}, we compute
  \begin{align*}
    \{i,a_1, & \ldots, a_r,k,b_1, \ldots, b_s,j \}
      + \{ i,a_1, \ldots,a_r,j,b_1, \ldots,b_s,k\} \\
    &= (-1)^{s-1} \left( \{ i,a_1, \ldots,a_r,b_1, \ldots,b_{s-1},k,b_s,j \}
      + \{i,a_1, \ldots,a_r,b_1, \ldots,b_{s-1},j,b_s,k \} \right)  \\
    &= (-1)^s \{i,a_1, \ldots, a_r,b_1, \ldots,b_s,j,k \} \\
    &= \pm (-1)^s \{i,i,1, \ldots, \widehat{ijk}, \ldots, n-1,j,k\},
 \end{align*}
 and
 \begin{align*}
   \{j,a_1, & \ldots, a_r,k,b_1, \ldots, b_s,i \}
     + \{ k,a_1, \ldots,a_r,j,b_1, \ldots,b_s,i\} \\
   &= (-1)^{r-1} \left( \{ j,a_1,k,a_2, \ldots,a_r,b_1, \ldots, b_s,i \}
     + \{k,a_1,j,a_2, \ldots,a_r,b_1, \ldots,b_s,i \} \right) \\
   & = (-1)^r \{j,k,a_1, \ldots, a_r,b_1, \ldots,b_s,i \} \\
   & = \pm (-1)^{s-1} \{j,k,1, \ldots, \widehat{ijk}, \ldots, n-1,i,i \} \\
   & = \pm (-1)^{s} \{i,i,1, \ldots, \widehat{ijk}, \ldots, n-1,j,k\}.
 \end{align*}
 If $i \notin \{a_1,b_s\}$ then we simply add these two expressions,
 giving a factor of $2$. If $i \in \{a_1,b_s\}$ then we take twice one
 expression plus the other, giving a factor of $3$.

 We next suppose $s=0$. The first calculation in the last paragraph is
 modified by deleting the second line, and introducing a factor of~$2$
 thereafter.  If $a_1 \not= i$ then this gives an overall factor
 of~$3$. If $a_1 = i$ then we must take twice the first expression,
 but the second expression vanishes by Lemma~\ref{lem:sym}(iii). This
 gives an overall factor of $4$.  The case $r=0$ is similar.

 We deduce the result for $[[ \cdots ]]$ from that for $[ \cdots ]$ as
 before.
\end{proof}

The following lemma prepares for the proof of~\eqref{braceidentity4}.
\begin{lemma}
  \label{lem:indepsum}
  Let $1 \leqs i,j \leqs n-1$ distinct. Let $a_1, \ldots, a_{n-3}$ be a
  permutation of $1,2, \ldots, \widehat{ij},$ $\ldots, n-1$ with sign
  $\nu$. Then
  \begin{equation}
    \label{Aij}
    A(i,j) := \nu \left( \{ i,j,a_1, \ldots,a_{n-3},i\}
      + \{i,i,a_1, \ldots,a_{n-3}, j \} \right) \end{equation}
  does not depend on the choice of $a_1, \ldots, a_{n-3}$.
\end{lemma}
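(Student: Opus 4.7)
The plan is to verify invariance of $A(i,j)$ under all transpositions generating the symmetric group $S_{n-3}$ acting on $(a_1,\ldots,a_{n-3})$. I split these into (a) adjacent transpositions $(a_k,a_{k+1})$ for $1\leqs k\leqs n-5$ and (b) the single transposition of $a_{n-4}$ and $a_{n-3}$. For case (a), both symbols $\{i,j,a_1,\ldots,a_{n-3},i\}$ and $\{i,i,a_1,\ldots,a_{n-3},j\}$ have fixed first two and last two entries, and $a_k,a_{k+1}$ lie in the middle $n-4$ positions, so Lemma~\ref{lem:sym}(i) contributes a factor $-1$ on each symbol, exactly cancelling the sign flip in $\nu$.

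For case (b), set $\alpha=(a_1,\ldots,a_{n-5})$, $b=a_{n-4}$, $c=a_{n-3}$. The goal reduces to showing
\[ \{i,j,\alpha,b,c,i\}+\{i,j,\alpha,c,b,i\}+\{i,i,\alpha,b,c,j\}+\{i,i,\alpha,c,b,j\}=0. \]
The main ingredient is the three-term identity, valid for pairwise distinct $b,c,\ell$,
\[ \frac{\partial \phi_{n-3}}{\partial x_b}\cdot\frac{\partial^2\phi_{n-2}}{\partial x_c\partial x_\ell}+\frac{\partial \phi_{n-3}}{\partial x_c}\cdot\frac{\partial^2\phi_{n-2}}{\partial x_b\partial x_\ell}+\frac{\partial \phi_{n-3}}{\partial x_\ell}\cdot\frac{\partial^2\phi_{n-2}}{\partial x_b\partial x_c}=0, \]
obtained by applying $\partial_b\partial_c\partial_\ell$ to $\phi_{n-3}\phi_{n-2}=0$ and using that $\phi_{n-3}$ is linear and $\phi_{n-2}$ is quadratic to kill the remaining Leibniz terms.

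Multiplying this identity on the left by the common prefix $P(i,j)\,\partial_{\alpha_1}\phi_2\cdots\partial_{\alpha_{n-5}}\phi_{n-4}$ and taking $\ell=i$ yields
\[ \{i,j,\alpha,b,c,i\}+\{i,j,\alpha,c,b,i\}=-\{i,j,\alpha,i,b,c\}, \]
and multiplying instead by $(\partial^2\phi_1/\partial x_i^2)\,\partial_{\alpha_1}\phi_2\cdots\partial_{\alpha_{n-5}}\phi_{n-4}$ with $\ell=j$ yields
\[ \{i,i,\alpha,b,c,j\}+\{i,i,\alpha,c,b,j\}=-\{i,i,\alpha,j,b,c\}. \]
So the claim reduces to $\{i,j,\alpha,i,b,c\}+\{i,i,\alpha,j,b,c\}=0$. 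Both of these symbols begin with $i$, end with $b,c$, and have $i$ appearing among the middle $n-3$ entries; their middle sequences differ by a single transposition ($j\leftrightarrow i$ at the two endpoints of the middle). Lemma~\ref{lem:sym}(ii) thus gives the relation with sign $-1$, producing the desired cancellation.

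The main obstacle is finding the right three-term identity: the standard trick of differentiating $\phi_r\phi_{r+1}=0$ twice (as in Lemmas~\ref{swaplemma} and~\ref{swaplemma2}) only allows us to swap adjacent middle entries, whereas the swap in case (b) trades a middle entry with the first $Q$-argument. Triple differentiation of $\phi_{n-3}\phi_{n-2}=0$ is the correct substitute, and the critical last observation is that the two residual symbols share the pattern required by Lemma~\ref{lem:sym}(ii)---namely, $i$ occurs in the permutable middle block---so that they cancel under a single transposition.
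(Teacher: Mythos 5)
Your proof is correct and follows essentially the same route as the paper: the paper also reduces to the transposition of $a_{n-4}$ and $a_{n-3}$, handles it by the reversed analogue of Lemma~\ref{lem:triple} (your triple-differentiation identity for $\phi_{n-3}\phi_{n-2}=0$) together with the cancellation $\{i,j,\ldots,i,b,c\}=-\{i,i,\ldots,j,b,c\}$ from Lemma~\ref{lem:sym}(ii), and disposes of the remaining transpositions via Lemma~\ref{lem:sym}(i). Your write-up merely makes explicit the derivation of the reversed triple identity that the paper cites in passing.
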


\begin{proof}
  If $\cdots$ denotes the same in each case, then by
  Lemma~\ref{lem:triple} we have
  \begin{align*}
     \{ ij \cdots rs i \} + \{ ij \cdots sri \} + \{ ij \cdots irs \} &= 0,\\
     \{ ii \cdots rs j \} + \{ ii \cdots srj \} + \{ ii \cdots jrs \} &= 0.
 \end{align*}
 Lemma~\ref{lem:sym}(ii) shows that the final terms in these two sums
 differ by a sign. The right hand side of~\eqref{Aij} is therefore
 invariant under switching the last two $a$'s.
 The lemma now follows by Lemma~\ref{lem:sym}(i).
\end{proof}
 
\begin{lemma}
\label{lem:ab}
Let $1 \leqs i,j \leqs n-1$ distinct. Let
$a_{1}, \ldots, a_{r},b_{1}, \ldots, b_{s},i,j$ be a permutation of
$1,2, \ldots, n-1$.  Then
\[\frac{\partial^2[a_1, \ldots, a_{r},j,b_{1}, \ldots,b_{s}]}{\partial x_i^2}
  + 2 \frac{\partial^2[a_1, \ldots,  a_{r},i,b_{1},
  \ldots,b_{s}]}{\partial x_i \partial x_j}  = \pm (-1)^r 2
  \left( 1 + 1_{rs=0} \right) A(i,j) \]
and 
\[\frac{\partial^2[[a_1, \ldots, a_{r},j,b_{1}, \ldots,b_{s}]]}{\partial x_i^2}
  + 2 \frac{\partial^2[[a_1, \ldots, a_{r},i,b_{1}, \ldots,
    b_{s}]]}{\partial x_i \partial x_j} = \pm (-1)^r (2n) A(i,j), \]
where $\pm$ is the sign of the permutation taking
$a_1, \ldots, a_r,b_1,\ldots,b_s$ to
$1, \ldots, \widehat{ij}, \ldots, n-1$.
\end{lemma}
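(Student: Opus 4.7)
The plan is to mimic the structure of Lemmas~\ref{lem:braces1}, \ref{lem:braces2}, and~\ref{lem:symbols3}: expand each of the two second partial derivatives as a sum of canonical symbols $\{\cdots\}$ via the Leibniz rule, and then use the symmetries of Lemma~\ref{lem:sym} together with the triple identity of Lemma~\ref{lem:triple} to recognize the result as a multiple of $A(i,j)$, which is well-defined by Lemma~\ref{lem:indepsum}. The key structural fact, used also in the previous lemmas, is that $\phi_k$ is linear for $2 \leqs k \leqs n-3$, so only the outer factors $M_1 = \partial\phi_1/\partial x_{c_1}$ and $M_{n-2} = \partial\phi_{n-2}/\partial x_{c_{n-2}}$ contribute on second differentiation. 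In the generic case $r,s \geqs 1$ this yields
\begin{align*}
\frac{\partial^2[a_1,\ldots,a_r,j,b_1,\ldots,b_s]}{\partial x_i^2}
  &= 2\{i, a_1,\ldots,a_r,j,b_1,\ldots,b_s, i\}, \\
\frac{\partial^2[a_1,\ldots,a_r,i,b_1,\ldots,b_s]}{\partial x_i\,\partial x_j}
  &= \{i, a_1,\ldots,a_r,i,b_1,\ldots,b_s, j\} + \{j, a_1,\ldots,a_r,i,b_1,\ldots,b_s, i\}.
\end{align*}

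Both terms on the second line contain a repeated $i$, so Lemma~\ref{lem:sym}(ii) --- applied after reversal (Lemma~\ref{lem:sym}(iii)) for the second term --- directly reduces them to the two canonical forms $\{i,i,\cdots,j\}$ and $\{i,j,\cdots,i\}$ appearing in $A(i,j)$, up to a sign $(-1)^r$ from the cyclic shift that brings the repeated $i$ next to the leading one. The first-line symbol is the harder piece, since $j$ is trapped in the middle; I would first use Lemma~\ref{lem:sym}(i) to move $j$ to position~3 (sign $(-1)^{r-1}$) and then apply the triple identity of Lemma~\ref{lem:triple} to the first three positions, writing $\{i, a_1, j, \cdots, i\} = -\{a_1, j, i, \cdots, i\} - \{j, i, a_1, \cdots, i\}$. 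The last summand is $\pm\{i,j,\cdots,i\}$ by $P$-symmetry and hence canonical, while the middle summand has the same shape as the second summand on the second line above and is canonicalized by the same (ii)-plus-reversal argument. Adding the three canonicalized pieces yields exactly $\pm(-1)^r\cdot 2\,A(i,j)$. The edge cases $r=0$ and $s=0$ differ only in that the bracket in the $\partial_i\partial_j$ term then has $i$ as its leading or trailing entry, so $\partial^2\phi_1/\partial x_i^2 = 2P(i,i)$ or $\partial^2\phi_{n-2}/\partial x_i^2 = 2Q(i,i)$ contributes an additional factor of $2$, which accounts for the $1_{rs=0}$ correction.

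The double bracket identity then follows by summing the single bracket one over the $n-2$ even cyclic shifts $\sigma^{2k}$, as in the last step of Lemmas~\ref{lem:braces1}--\ref{lem:symbols3}: Lemma~\ref{lem:indepsum} ensures that $A(i,j)$ is unchanged (up to a common sign, since $\sigma^2$ is even) by the reordering of the $a$'s and $b$'s, and the counting ($n-2$ generic shifts plus two extra contributions each from the $r=0$ and $s=0$ boundary positions) produces the overall factor $2(n-2)+2+2 = 2n$. The main obstacle is sign bookkeeping: one must carefully verify that the signs from Lemma~\ref{lem:sym}(i)--(iii) and from the triple identity combine so that the two canonical summands of $A(i,j)$ emerge with the same overall sign (and hence reinforce rather than cancel) and that this common sign matches $\pm(-1)^r$ with $\pm$ the sign of the permutation specified in the statement.
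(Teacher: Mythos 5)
Your overall strategy is the same as the paper's: expand each second derivative by the Leibniz rule into symbols $\{\cdots\}$ (only the outer factors $\partial\phi_1$ and $\partial\phi_{n-2}$ survive two differentiations), reduce to $A(i,j)$ using Lemmas~\ref{lem:sym} and~\ref{lem:triple}, treat the boundary cases $r=0$ and $s=0$ separately, and deduce the double-bracket statement by summing over the $n-2$ even cyclic shifts with the count $2(n-2)+2+2=2n$. The Leibniz expansions you write down are correct and agree with the paper's three terms $2\{i,\ldots,j,\ldots,i\}+2\{i,\ldots,i,\ldots,j\}+2\{j,\ldots,i,\ldots,i\}$.

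There is, however, a concrete misstep in the generic case $r,s\geqs 1$. The cross term $\{j,a_1,\ldots,a_r,i,b_1,\ldots,b_s,i\}$ does \emph{not} reduce to the canonical form $\{i,j,\cdots,i\}$ via Lemma~\ref{lem:sym}: reversal by (iii) followed by (ii) brings the two copies of $i$ together at the front and sends $j$ to the back, yielding a symbol of type $\pm\{i,i,\cdots,j\}$ with middle entries in the order $b_s,a_2,\ldots,a_r,b_1,\ldots,b_{s-1},a_1$; none of the symmetries in Lemma~\ref{lem:sym} can move the interior $i$ into the second slot past $a_1$, so the $\{i,j,\cdots,i\}$ component of $A(i,j)$ can only be produced by the triple identity. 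The correct bookkeeping is that the leftover summand $-\{a_1,j,i,\cdots,i\}$ from applying Lemma~\ref{lem:triple} to the $\partial^2/\partial x_i^2$ bracket exactly \emph{cancels} this cross term (both equal $\pm\{i,i,b_s,a_2,\ldots,a_r,b_1,\ldots,b_{s-1},a_1,j\}$ with opposite signs), which is why the paper applies the triple identity to those two symbols jointly. As you describe it --- three canonicalized pieces that simply ``add up'' --- the multiplicities come out wrong: you would obtain three copies of one canonical symbol and one of the other, rather than the required $2\left(\{i,j,\cdots,i\}+\{i,i,\cdots,j\}\right)$. Relatedly, your treatment of the boundary cases accounts for the extra factor of $2$ but not for the sign $(-1)^r$ when $s=0$, which the paper obtains by using Lemma~\ref{lem:sym}(iii) to identify the $s=0$ expression with minus the $r=0$ one. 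Both issues are repairable within your framework, but as written the generic-case accounting does not close.
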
 
\begin{proof}
  If $r=0$ then the left hand side equals
\begin{equation}
\label{eqn:r=0}
2\{i,j,b_1, \ldots,b_s,i \} + 4 \{i,i,b_1, \ldots,b_s,j \}
+ 2 \{ j,i,b_1, \ldots, b_s,i \} = \pm 4 A(i,j). 
\end{equation}

If $r,s \geqs 1$ then we instead obtain
\[ 2\{i,a_1, \ldots, a_r,j,b_1, \ldots,b_s,i \} + 2 \{i,a_1, \ldots,
  a_r,i,b_1, \ldots,b_s,j \} + 2 \{ j,a_1, \ldots, a_r,i,b_1, \ldots,
  b_s,i \}. \] Cancelling a factor $(-1)^r 2$ and applying
Lemma~\ref{lem:sym} term by term gives
\[ -\{i,a_1,j, \ldots, a_r,b_1, \ldots,b_s,i \} + \{i,i,a_1, \ldots,
  a_r,b_1, \ldots,b_s,j \} - \{ j,a_1,i, \ldots, a_r,b_1, \ldots,
  b_s,i \}.  \] Applying Lemma~\ref{lem:triple} to the first and
third terms shows that this equals $\pm A(i,j)$.

Finally, when $s=0$ 
the left hand side is
\[ 2\{i,a_1, \ldots,a_r,j,i \} + 2 \{i,a_1, \ldots,a_r,i,j \} + 4 \{
  j,a_1, \ldots, a_r,i,i \}. \] By Lemma~\ref{lem:sym}(iii) this is
minus the expression we get by replacing $b_1, \ldots, b_s$ by
$a_2, \ldots, a_r,a_1$ in~\eqref{eqn:r=0}.  This gives the factor
$(-1)^r$.

We deduce the result for $[[ \cdots ]]$ from that for $[ \cdots ]$ as
before.
\end{proof}

Taking $r=0$ and
$b_1, \ldots, b_s = 1, \ldots, \widehat{ij}, \ldots, n-1$ in
Lemma~\ref{lem:ab}, and appealing to Lemma~\ref{double bracket
  permute}, gives
\[ \frac{\partial^2 \Omega_i}{\partial x_i^2}
  - 2 \frac{\partial^2 \Omega_j}{\partial x_i \partial x_j} 
  = -\varepsilon_{ij} (2n) A(i,j). \]
Taking $a_1, \ldots,a_{n-3} = 1, \ldots, \widehat{ijk}, \ldots, n-1,k$
in Lemma~\ref{lem:indepsum} shows that this equals
\[ (-1)^{n+1} \varepsilon_{ijk} (2n) \left( \{i,j,1, \ldots,
    \widehat{ijk}, \ldots, n-1,k,i\} + \{i,i,1, \ldots, \widehat{ijk},
    \ldots, n-1,j,k \} \right). \]
Adding~\eqref{braceidentity3} gives~\eqref{braceidentity4}. This
completes the proof of Theorem~\ref{finaltheorem}, and hence of
Theorem~\ref{thm:overZ}.

\bibliographystyle{amsalpha}
\bibliography{references}
\end{document}